\documentclass{amsart}
\usepackage{amsmath,amssymb,enumerate,amscd,tikz,tikz-cd,url, color, array, multirow}
 
\usepackage[skip=2pt]{caption} 

\theoremstyle{plain}
\newtheorem{theo}[equation]{Theorem}
\newtheorem{lem}[equation]{Lemma}
\newtheorem{cor}[equation]{Corollary}
\newtheorem{prop}[equation]{Proposition}
\theoremstyle{definition}
\newtheorem{Ques}{Question}

\newtheorem{Def}[equation]{Definition}

\newtheorem{Rem}[equation]{Remark} 
\newtheorem{Not}[equation]{Notation}

\newcommand{\F}{{\mathbb F}}
    
\newcommand{\Q}{{\mathbb Q}}

\newcommand{\Z}{{\mathbb Z}}

\newcommand{\bfD}{{\bf D}}

\newcommand{\bfK}{{\bf K}}

\newcommand{\bfOd}{{\bf Od}}
\newcommand{\bfQE}{{\bf QE}}

\newcommand{\bfT}{{\bf T}}

\newcommand{\Mu}{\boldsymbol \mu}

\newcommand{\gc}{{\mathfrak c}}

\newcommand{\gf}{{\mathfrak f}}
\newcommand{\gF}{{\mathfrak F}}

\newcommand{\gp}{{\mathfrak p}}
\newcommand{\gq}{{\mathfrak q}}
\newcommand{\gP}{{\mathfrak P}}
\newcommand{\gQ}{{\mathfrak Q}}

\newcommand{\cD}{{\mathcal D}}
\newcommand{\cE}{{\mathcal E}}
\newcommand{\cF}{{\mathcal F}}
\newcommand{\cG}{{\mathcal G}}
\newcommand{\cH}{{\mathcal H}}
\newcommand{\cI}{{\mathcal I}}

\newcommand{\cO}{{\mathcal O}}
\newcommand{\cP}{{\mathcal P}}

\newcommand{\cS}{{\mathcal S}}
\newcommand{\cT}{{\mathcal T}}

\newcommand{\Ext}{\operatorname{Ext}}
\newcommand{\grad}{\operatorname{\mathbf{gr}}}

\newcommand{\rk}{\operatorname{rank}}
\newcommand{\Gal}{\operatorname{Gal}}
\newcommand{\Hom}{\operatorname{Hom}}

\newcommand{\ov}[1]{\overline{#1}}
\newcommand{\fdeg}[2]{[{#1}\!:\!{#2}]}
\newcommand{\vv}[1]{\vert {#1} \vert}
\newcommand{\lr}[1]{\langle{#1}\rangle}

\makeatletter
\renewcommand*\l@subsection{\@tocline{2}{0pt}{30pt}{0pt}{}}
\makeatother

\begin{document}
\title{Fields with no everywhere good abelian varieties}
\subjclass[2020]{Primary 11G10; \, Secondary 11S15, 14L15}
\keywords{Abelian varieties over number fields, everywhere good reduction, ramification bounds, group schemes, class field theory}
\date{15 Mar 2026}

\author[Armand Brumer]{Armand Brumer}
\address{Department of Mathematics \\
Fordham University \\
Bronx, NY 10458, USA}
\email{brumer@fordham.edu}

\author[Kenneth Kramer]{Kenneth Kramer}
\address{Department of Mathematics \\
Queens College and the Graduate Center, CUNY \\
Queens, NY 11367, USA}
\email{kkramer@qc.cuny.edu}

\begin{abstract}
We extend methods of Fontaine, Abrashkin and Schoof to obtain criteria determining number fields K over which no non-zero abelian variety with everywhere good reduction exists.  As an application, under the GRH,  we find 24744 such fields of various degrees up to 16.
\end{abstract}

\maketitle 

\tableofcontents

\numberwithin{equation}{section}

\section{Introduction}  \label{Intro}
Given a number field $K$ and a finite set of places $S$, there are finitely many abelian varieties of dimension $g$ with good reduction outside $S$ \cite{Fa,Zar}.  Fontaine \cite{Fo} and Abrashkin  \cite{Ab} independently proved that there is no non-zero abelian variety with everywhere good reduction over $\Q$. 

\begin{Def} We say that $K$ is a Fontaine field if there is no non-zero abelian variety over $K$ with everywhere good reduction. 
\end{Def}

In \cite{Sch3}, Schoof  showed that for $f$ in $\{3,4,5,7,8,9,12,11,15\},$  the cyclotomic fields $\Q(\zeta_f)$ are Fontaine fields.  In \cite{Sch7a}, the same is true of the quadratic fields $\Q(\sqrt{d})$ with $d$ in $\{5,8,12,13,17,21\}$.  Those 16 number fields seem to be  the only known Fontaine fields, even under the assumption of GRH.  This paper extends the methods used by those authors. Assuming GRH, we verify that  many fields $K$ of degree $n\le16$ are Fontaine, as shown in Table \ref{punchline}. 

\begin{Ques} 
How does the number of Fontaine fields of degree $n$  vary with $n$? 
\end{Ques} 

All semistable abelian varieties over $\Q$ good outside a set of small primes were determined in \cite{BK1,BK2,Ca,Sch4,Sch5,Sch6}.  Schoof and Dembele  \cite{Sch1,Sch7,De,Sch2} show that for some real quadratic fields $K$ of discriminant at most 100 and for $\Q(\zeta_{20})$, there is only one isogeny class of simple abelian varieties good everywhere.

\begin{Def}\label{FoSc_def} 
Let $K$ be a number field and let $S$ be a finite set of primes of $K$. The pair $(K,S)$ is a {\em Schoof pair} if there are only finitely many absolutely simple  semistable abelian varieties over $K$ good outside $S$.  
\end{Def}

It is of interest to find Schoof pairs. It is more challenging to consider the following question, which also  occurs as  Question 7.4 in \cite{Kha}.

\begin{Ques}  
Are all $(K,S)$ Schoof pairs or, to the contrary, are there only finitely many Schoof pairs $(K,S)$?
\end{Ques}

Recall that an abelian variety $A$ is good everywhere over $K$ if and only if its N\'eron model is a group scheme over the maximal order $\cO_K$ of $K$. Then $A[2^n]$ is a finite flat group scheme over $\cO_K$. 

Let $\Delta_K$ denote the discriminant of $K$ and let $\delta_K=\Delta_K^{\frac{1}{[K:\Q]}}$ be its root discriminant. The two main tools available to decide whether $K$ is a Fontaine field are found in \cite{Fo,Odl}.  Let $E$ be a finite flat group scheme over $\cO_K$ of exponent 2.  In \cite{Fo}, the root discriminant of $L =K(E)$ is bounded by $\delta_L< 4 \delta_K$, where the inequality should be interpreted as strict divisibility. Odlyzko  then bounds the degree $\fdeg{L}{\Q}$ in terms of $\delta_K$.

Since the Odlyzko bounds play a decisive role in determining all the simple finite flat group scheme of exponent 2 and their extensions by one another, an answer to our questions seems  out of reach at the moment.   However, absent the Odlyzko bounds, one might consider the following easier question.  See for instance \cite{BK3, BK4}.

\begin{Ques}  \label{QC}
Let $K$ be a  number field, $S$ a finite set of primes of $K$ prime to $\ell$ and $\cO_S$  the ring of $S$-integers  of $K$.  Given a collection $\mathcal{E}$ of simple finite flat group  schemes of exponent $\ell$ over $\cO_S$, what are the semistable abelian varieties $A$ such that $A[\ell]$ has a composition series with constituents in $\cE$?
\end{Ques}

Write $\cT_K$ for the set of all finite flat group schemes of order 2 over $\cO_K$, as constructed  in \cite{TO}.  Taking  $\cE=\cT_K$ and $S=\emptyset$ leads us to the next definition.

\begin{Def}
Let $V$ be a finite flat group scheme over $\cO_K$.  We say that $V$ is {\em prosaic} if the field of points $K(V)$ is a 2-{\em extension} of $K$. Let $A$ be an abelian scheme over $\cO_K$. We say that $A$ is {\em prosaic} if $A[2]$ is prosaic, i.e., the 2-division field $K(A[2])$ is a 2-extension of $K$.
\end{Def}

For $V$ to be prosaic, it is equivalent that $V$ admit a filtration whose constituents are in $\cT_K$. 

Ramified primes over 2 introduce additional difficulties, so we assume throughout this document that the number field $K$ satisfies the following hypothesis.

\vspace{6 pt}

\noindent {\bf Hypothesis K}:  \hspace{20 pt}  
\fbox{\parbox{190 pt}
{\begin{center} 
$K$ has odd discriminant, maximal order $\cO$ and narrow class number $h_K^+ = 1$.  
\end{center}}} 
\vspace{6 pt}

In \S \ref{Font}, we recall the bound of Abrashkin-Fontaine on higher ramification groups associated to group schemes over a local field.  It is used in Definition \ref{cFL1L2}, to  introduce an extension $\cF/K$ that contains the field of points $K(E)$ for every finite flat group scheme $E$ of exponent 2 over $\cO$.   If the maximal Galois 2-extension $\cF_2$ of $K$ in $\cF$ is equal to $\cF$, then every abelian scheme $A$ over $\cO$ is prosaic. 

One novelty of our paper is to use group theoretic implications of the ramification bound in the definition of $\cF$, rather than just the implied discriminant bounds. For primes $\gp \vert 2\cO$, we set $\gc = 2 \gp^{-1}$ and define the {\em witness} field  $K^\gp$ by
\begin{equation}  \label{Kp}
\fbox{\parbox{220 pt}
{\begin{tabular}{l}
$K^\gp$ is the maximal elementary $2$-extension of $K$ \\ 
              \hspace{13 pt}  whose ray class conductor divides $\infty \gc^2$.   
\end{tabular}}}
\end{equation}
We assume that no quadratic extension of $K$ of ray class modulus $\infty\gc^2 $ splits over $\gp$.  Equivalently, we have the dichotomy \eqref{D1D2}  in terms of $K^\gp$.  Propositions \ref{D1Type} and \ref{D2Type} are consequences of the ramification bounds.

Within $\cF$, let $L_1$  be the maximal abelian extension of $K$ iand let $L_2$ be the maximal abelian extension of $L_1$.  The following Proposition illustrates our control of  the maximal solvable extension $\cF_s$ over $K$ in $\cF$. 

\begin{prop} 
Assume the hypotheses of Proposition {\rm \ref{DpTest1}}.
\begin{enumerate}[{\rm i)}]
\item If  $K^\gp = K$, then $\cF_s = L_1.$  \vspace{2 pt}
\item Suppose that $\fdeg{K^\gp}{K} = 2$ and $\gp$ is inert in $K$. Then $\cF_2 = L_2$ and  $\Gal(L_2/K)$ has the wreath product structure in Proposition {\rm \ref{Wreath}}. If, in addition, Proposition {\rm \ref{DpTest2}} applies, then  $\cF_s = L_2$
\end{enumerate}
\end{prop}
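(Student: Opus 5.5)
The approach is to read this Proposition as a summary of results proved later, and to assemble it from Propositions \ref{DpTest1}, \ref{D1Type}, \ref{D2Type}, \ref{Wreath} and \ref{DpTest2}. The underlying tool throughout is class field theory for the successive abelian layers of $\cF/K$, combined with the Abrashkin--Fontaine ramification bound recalled in \S\ref{Font}. The solvable part $\cF_s$ is the union of the derived tower $K\subseteq L_1\subseteq L_2\subseteq\cdots$ inside $\cF$, so it suffices to prove that the tower stops. In case i) this means $L_2=L_1$; in case ii) it means $L_3=L_2$ (and, for the $2$-part, $\cF_2=L_2$).

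For i), first determine $L_1$. An abelian extension $M/K$ inside $\cF$ has ray class conductor bounded by the ramification bound at each $\gp\mid 2$. Since $h_K^+=1$, the possible abelian extensions are controlled by ray class groups of modulus $\infty\gc^2$, or a bounded power of it. The hypothesis $K^\gp=K$ forbids any quadratic extension of conductor dividing $\infty\gc^2$. By Proposition \ref{D1Type}, this forces every abelian subextension of $\cF/K$ to be tamely ramified, or else to have ramification concentrated in a small group at $\gp$. Next, apply the same reasoning one level up, to $L_1$ in place of $K$. The ray class group of $L_1$ with the modulus permitted by the bound (Proposition \ref{DpTest1} supplies exactly this input) has no nontrivial quotient that is invariant under $\Gal(L_1/K)$ and compatible with the ramification filtration. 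Hence $[L_1,L_1]$-type extensions are trivial, so $L_2=L_1$. Because $\cF_s/K$ is solvable, this gives $\cF_s=L_1$.

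For ii), $K^\gp=K(\sqrt{\alpha})$ is the unique quadratic extension of conductor dividing $\infty\gc^2$. Proposition \ref{D2Type} describes the ramification at $\gp$: since $\gp$ is inert, the extension $L_1$ is, locally at $\gp$, an unramified extension followed by a controlled quadratic ramified step. Over $L_1$, the maximal abelian $2$-extension allowed by the bound is generated by conjugates of a single quadratic character. Galois acts by permuting these conjugates, and Proposition \ref{Wreath} identifies the resulting group $\Gal(L_2/K)$ as a wreath product. To get $\cF_2=L_2$, repeat the argument over $L_2$. The ramification bound, transported through the upper numbering, leaves no room at $\gp$ for a further quadratic layer, and since $h^+=1$ there is no unramified room either. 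Hence $L_2$ has no proper $2$-extension inside $\cF$. Finally, Proposition \ref{DpTest2} rules out odd-order abelian extensions of $L_2$ inside $\cF$, for instance via Odlyzko-type degree bounds or a ray class computation. Combining this with $\cF_2=L_2$ gives $\cF_s=L_2$.

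The main obstacle is the step in ii) from $L_2$ to $L_3$. There the upper ramification numbering must be tracked through a non-abelian (wreath) extension via Herbrand functions, to show that the jumps already used by $L_2/K$ leave no admissible room below the Fontaine bound $e_\gp(1+1/(2-1))$ for a new quadratic jump. I would first compute the local wreath group at $\gp$ explicitly (degree $8$ or $16$ over $K_\gp$), then check that its last upper jump equals the bound, which would make any further extension violate the bound.
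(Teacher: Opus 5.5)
Your top-level plan is what the paper does: read the statement as a summary of Propositions~\ref{DpTest1}, \ref{D1Type}, \ref{D2Type}, \ref{Wreath} and \ref{DpTest2}, and bare citation would suffice. But the arguments you attach to those results are not the ones that make them true, and the step you single out as the main obstacle would fail.

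The central missing idea is how odd-degree layers are excluded. The ramification bound \eqref{cFRamBd} places no restriction on them, because an abelian layer of odd degree $p$ is tamely ramified above $2$. That is why the hypotheses that $R_M$ and $R_N$ be $2$-groups are imposed. They are used only through the dihedral reductions of \S\ref{GroupTheory}:
\begin{enumerate}
\item[(a)] If an odd $p$ divides $\fdeg{L_2}{L_1}$, apply Proposition~\ref{OddDihedral} to $\Gal(L_2/K)$. Its abelianization $\Gal(L_1/K)$ is a $2$-group, hence elementary by Proposition~\ref{Elem2}. This gives a $D_p$-extension $D/K$. Then $D$ is cyclic of degree $p$ over its quadratic subfield $M\subset L_1$, with conductor dividing $\infty P_M$, contradicting the hypothesis on $R_M$.
\item[(b)] In the same way, Proposition~\ref{My9Lem} applied to $\Gal(L_3/K)$ with $H=\cI_\gP$ produces a $D_p$-extension of $K^\gp$, contradicting the hypothesis on $R_N$. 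That is the whole content of Proposition~\ref{DpTest2}; Odlyzko bounds play no role there.
\end{enumerate}
In i), your assertion about invariant quotients of a ray class group of $L_1$ is not what Proposition~\ref{DpTest1} assumes, and without the reduction in (a) it does not follow from what it does assume. Also, Proposition~\ref{D1Type} does not say that abelian layers are tame. It says $\cF_2$ is elementary abelian with $\gp$ totally ramified, and that is what disposes of the $2$-part of $L_2/L_1$.

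In ii), both $\cF_2\subseteq L_2$ and the wreath structure come from Proposition~\ref{D2Type}(i). Because $\gp$ is inert in $K^\gp$, the decomposition group $\cD_\gP$ is all of $G=\Gal(\cF_2/K)$. The inertia group $\cI_\gP$ is elementary abelian by Lemma~\ref{cF2Lemma}, and its fixed field is $K^\gp$, so it has index $2$. Hence $G'\subseteq\cI_\gP$ is elementary, $G^{ab}$ is elementary, and $G''=1$. Moreover, $\cI_\gP$ is exactly the subgroup required in Proposition~\ref{Wreath}(i).

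Your description of $\Gal(L_2/L_1)$ as ``generated by conjugates of a single quadratic character'' cannot be right. Write $G=\lr{\cI_\gP,b}$. Then $G'=\{x\,bxb^{-1} : x\in\cI_\gP\}$, and these elements are fixed by conjugation by $b$, so $G'$ is central. Thus $\Gal(L_1/K)$ acts trivially on $\Gal(L_2/L_1)$, and your description would force $\fdeg{L_2}{L_1}\le 2$, which nothing guarantees. Likewise, $h^+=1$ is a property of $K$ and gives no control over unramified extensions of $L_2$.

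Finally, your Herbrand-function plan rests on a false principle. An extension whose last upper jump already equals the bound ($1$ in the paper's numbering) can still admit further layers that respect the bound; $\cF_2$ is itself a compositum of such layers. The local group at $\gP$ is all of $\Gal(L_2/K)$, of order $2^{r_1+r_2+s}$ with $s=\rk G'$, not $8$ or $16$. And once $\cF_2=L_2$ is known, the passage from $L_2$ to $L_3$ concerns only odd $p$, where upper numbering gives no information.
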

Once $\cF_s$ is controlled, the GRH Odlyzko bounds are invoked to arrange that $\cF/K$ be a $2$-extension, making all abelian schemes over $\cO$ prosaic.  

In \S \ref{GrpSchemes}, Proposition \ref{ExtDim} describes the extension classes of elements of  $\cT_K$ by one another.  Some of those classes vanish under our conditions on quadratic extensions of $K$.   We then show that for each $n \ge 1$, there is a filtration of the form
$$
0 \subset V_1 \subset V_2 \subset V_3 \subset \dots \subset A[2^n],
$$
such that each quotient $V_{i+1}/V_i$ is filtered entirely by copies of a different element of  $\cT_K$.   For example, see Propositions \ref{MoveZMu} and \ref{sorter}.  We bound the exponent of each subquotient $V_{i+1}/V_i$, independently of $n$, in Corollaries \ref{z7Cor} and \ref{bdCor} and in Lemma \ref{ConstBd}.  When all our conditions on $K$ are satisfied, there is no abelian scheme over $\cO$.  Our conclusions are stated in Theorem \ref{2PrimeThm} and Theorem \ref{3PrimeThm}.

In \S \ref{HowDone}, we sketch how, with the help of \cite{LMF} and \cite{MAG}, we tested our hypotheses. Using only GRH for number fields, we produced 24744 Fontaine fields satisfying all our conditions, as detailed in Table \ref{punchline}. Two limitations were the lack of adequate tables of number fields of degree greater than 6 and the time involved in some of the tests.

While preparing this paper, we saw the preprint \cite{Tch}.  Following \cite{Mes},  he assumes  all the standard conjectures on $L$-functions of abelian varieties and applies  Weil's explicit formulas to them. Tchamitchian exhibits hundreds of fields over which there are no everywhere good abelian varieties.    
See Remark \ref{Tcha}.  
\medskip

\noindent {\bf Acknowledgments.} We owe a great debt to Ren\'e Schoof for his papers and in particular for making the preprint \cite{Sch7}  available to us in July 2020.

\section{Applications of higher ramification bounds}  \label{Font}

We first establish notation and review some results in \cite{Ab,Fo}.  Consequences for number fields $K$ satisfying Hypothesis {\bf K} are given in Propositions \ref{Elem2} and following.
 
Assume first that $F/E$ is a Galois extension of $p$-adic fields with Galois group $G$.  Using the lower numbering of \cite[Ch. IV]{Ser}, let $G_j$ run over the ramification subgroups of $G$ and let $\varphi_{F/E}$ be the Herbrand function: 
$$
\varphi_{F/E}(u) = \int_0^u \frac{dt}{\fdeg{G_0}{G_t}}.
$$
The upper numbering is given by $G^\alpha = G_j$ with $\alpha = \varphi_{F/E}(j)$.  Let
\begin{equation} \label{LastRam}
c_{F/E} = \max\{j \, \vert \, G_j \ne 1\} \quad \text{ and } \quad m_{F/E} = \varphi_{F/E}(c_{F/E}),
\end{equation}
with $c_{F/E} = m_{F/E} = -1$ when $F/E$ is unramified.  If $m = m_{F/E} \ge 0$, then $G^m \ne 1$ but $G^{m+\epsilon} = 1$ for all $\epsilon > 0$.  For a tower of fields $L \supseteq F \supseteq E$ with $L/E$ and $F/E$ Galois, we have $m_{F/E} \le m_{L/E}$ because of the surjection 
$$
\Gal(L/E)^\alpha \xrightarrow{\rm res} \Gal(F/E)^\alpha \quad \text{ for all } \alpha.
$$ 

Let $\gf = \gf(F/E)$ be the conductor exponent for an abelian extension $F/E$.  Equivalently, the ray class modulus for $F/E$ is $\gp^{\gf}$, where $\gp$ is the prime of $E$.  According to \cite[XV, \S2]{Ser},
$
\gf(F/E) =m_{F/E}+1,
$ 
with $\gf(F/E) = 0$ when $F/E$ is unramified.  Translation by an unramified extension of the base does not affect the conductor.   We state the bound on higher ramification using Serre's notation:   Fontaine's numbering is larger by 1.

\begin{theo}  \label{FontThm} \cite[Th\'{e}or\`{e}me A]{Fo}.  
Let $V$ be a finite flat group scheme of exponent $p^n$ over the ring of integers $\cO_{E}$ and let $e$ be the ramification index of $E/\Q_p$.  If $F$ is contained in the field of points $E(V)$, then
$$
m_{F/E} \le e \left(n+\frac{1}{p-1} \right)-1.
$$
\end{theo}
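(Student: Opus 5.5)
This is Fontaine's Théorème A, quoted from \cite{Fo}. I would follow Fontaine's original argument. The central tool is the annihilation criterion for $\cO_{\bar E}$, the ring of integers of an algebraic closure $\bar E$ of $E$. Let $\mathfrak{a}^{>\alpha}$ be the ideal of elements of $\cO_{\bar E}$ of valuation $> \alpha$ (valuation normalized so $v(p)=1$). The criterion reads as follows.

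\emph{Criterion.} Suppose that for every finite extension $E'/E$ inside $\bar E$ and every $\cO_E$-algebra map $\cO_{E'} \to \cO_{\bar E}/\mathfrak{a}^{>\alpha}$ there is a lift to $\cO_{\bar E}$. Then $G_E^{u}$ acts trivially on $\Hom(\cO_{E'},\cO_{\bar E})$ for all $u > e\alpha - 1$, in Serre's upper numbering.

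This criterion comes from the Herbrand function. The number of $\mathcal{O}_E$-embeddings of $\cO_{E'}$ into $\cO_{\bar E}/\mathfrak{a}^{>\alpha}$ is controlled by the different and by $\varphi_{E'/E}$. Comparing with the $G_E^u$-orbits, one reads off that the upper ramification filtration acts trivially past the stated break.

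\textbf{Step 1.} Reduce to $F = E(V)$. This is harmless because $m_{F/E}$ only decreases under passage to Galois subextensions, by the surjection of upper ramification groups recalled above. Write $V = \operatorname{Spec} R$ with $R$ a finite flat Hopf algebra over $\cO_E$. Then $E(V)$ is the compositum of the fields generated by the points of $V(\bar E) = \Hom(R,\cO_{\bar E})$. It therefore suffices to prove the lifting property for $R$ with $\alpha = n + \frac{1}{p-1}$.

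\textbf{Step 2.} This is the main point, and I expect it to be the main obstacle. I must show that any point of $V(\cO_{\bar E}/\mathfrak{a}^{>n+1/(p-1)})$ lifts to $V(\cO_{\bar E})$, i.e.\ that
$$V(\cO_{\bar E}) \to V(\cO_{\bar E}/\mathfrak{a}^{>n+1/(p-1)})$$
is surjective. Fontaine does this with differentials. Since $V$ is killed by $p^n$, the module $\omega_V$ of invariant differentials satisfies $p^n\omega_V = 0$. One then shows that $\Omega_{R/\cO_E}$ is killed by $p^n$, and that the relevant obstruction to lifting is killed by $p^{n}$ times the different of $\cO_{\bar E}$. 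The different's contribution is where the extra $\frac{1}{p-1}$ enters, since $v(\zeta_p - 1) = \frac{1}{p-1}$.

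The cleanest route I would try is to embed $V$ into a smooth formal group or a $p$-divisible group. Alternatively, one can use the exact sequence $0 \to V \to G \to G' \to 0$ with $G, G'$ smooth, which exists by Raynaud / Oort–Tate–style resolutions. Smoothness allows lifting along nilpotent thickenings. The kernel is then controlled by $p^n$-torsion, and the estimate on $\operatorname{Tor}$/$\Omega$ gives exactly the bound $v > n + \frac{1}{p-1}$.

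\textbf{Step 3.} Apply the criterion with $\alpha = n + \frac{1}{p-1}$. This gives $G_E^u$ trivial on $V(\bar E)$ for $u > e(n+\frac{1}{p-1}) - 1$, hence
$$m_{F/E} \le e\left(n+\tfrac{1}{p-1}\right) - 1.$$
The conversion to Serre's numbering accounts for the shift by $1$ noted before the statement.
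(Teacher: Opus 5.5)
Your outline has the right overall shape: a Herbrand-function criterion, plus the fact that $p^n$ kills the differentials of $V$. (The paper itself gives no proof; it only cites Fontaine.) But there is a genuine gap, and it sits in the step you call the main point. The surjectivity of $V(\cO_{\bar E})\to V(\cO_{\bar E}/\mathfrak a^{>n+1/(p-1)})$ claimed in Step~2 is false.

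\emph{Counterexample.} Take $E=\Q_2$ and $V=\Mu_2$, so $n=1$ and $\alpha=2$. The element $x=3$ satisfies $v(x^2-1)=v(8)=3>2$, so it defines a point of $\Mu_2(\cO_{\bar E}/\mathfrak a^{>2})$. But $v(3-1)=1$ and $v(3+1)=2$, so it is congruent modulo $\mathfrak a^{>2}$ to neither $1$ nor $-1$. For $\Mu_p$ over $\Z_p$, any $x=1+z$ with $v(z)=\frac{p}{p-1}$ behaves the same way.

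\emph{Your criterion has the same defect.} If $E'/E$ is ramified and $\cO_{E'}=\cO_E[\theta]$, take a generic $y\in\cO_{\bar E}$ at distance exactly $\alpha$ from a root of the minimal polynomial $f$ of $\theta$. Then $v(f(y))>\alpha$, yet $y$ is within distance $>\alpha$ of no root. So your hypothesis, lifting into $\cO_{\bar E}$ congruently modulo $\mathfrak a^{>\alpha}$, holds only for unramified extensions. In Fontaine's criterion the Galois group enters through a \emph{varying test field}, which your version lacks. Also, the $\frac{1}{p-1}$ has nothing to do with a ``different of $\cO_{\bar E}$''.

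What is actually needed has three ingredients.

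(a) \emph{Fontaine's criterion}, proved by the Herbrand-function computation you allude to. Let $L/E$ be finite Galois. Suppose that for every algebraic extension $E_1$ of $E$ in $\bar E$, the mere existence of an $\cO_E$-algebra map $\cO_L\to\cO_{E_1}/(\mathfrak a^{>\mu}\cap\cO_{E_1})$ forces an $E$-embedding $L\hookrightarrow E_1$. Then $G_E^u$ acts trivially on $L$ for $u>e\mu-1$. No lift of the given map is required.

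(b) \emph{Lifting with a loss.} Since $p^n\omega_V=0$ and $\Omega_{R/\cO_E}\simeq R\otimes_{\cO_E}\omega_V$, for $\mu>n+\frac{1}{p-1}$ every point of $V(\cO_{\bar E}/\mathfrak a^{>\mu})$ is congruent modulo $\mathfrak a^{>\mu-n}$ to a genuine point $y\in V(\cO_{\bar E})$. You lose $n$; you do not gain exactness. Realizing $V$ as the kernel of an isogeny of smooth groups, as you suggest, is a reasonable way to prove this weaker statement.

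(c) \emph{The true source of $\frac{1}{p-1}$.} Reduction $V(\cO_{\bar E})\to V(\cO_{\bar E}/\mathfrak a^{>1/(p-1)})$ is injective. A point of order $p$ generates a subgroup scheme of order $p$ over the integers of its field of definition, and by Oort--Tate its nonzero points have valuation $v(a)/(p-1)\le\frac{1}{p-1}$.

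\emph{Combining them.} Suppose the approximate point $x$ has values in $\cO_{E_1}$. For $\sigma\in\Gal(\bar E/E_1)$, the point $\sigma(y)-y$ vanishes modulo $\mathfrak a^{>\mu-n}$, and $\mu-n>\frac{1}{p-1}$. Hence $\sigma(y)=y$ and $y\in V(E_1)$. Apply this with $L=E(V)$ to the images of all points of $V(\cO_L)$ under a map $\cO_L\to\cO_{E_1}/\mathfrak a^{>\mu}$. Such a map preserves valuations below $\mu$, so the images stay pairwise incongruent modulo $\mathfrak a^{>\mu-n}$. This gives $\vv{V(\bar E)}$ distinct $E_1$-rational points, so $L\hookrightarrow E_1$. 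That is the criterion's hypothesis for every $\mu>n+\frac{1}{p-1}$, whence $m_{L/E}\le e(n+\frac{1}{p-1})-1$. The bound for $F\subseteq L$ then follows as in your Step~1.

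Neither the Galois-invariance step nor the counting step appears in your sketch, and without them Step~2 cannot be repaired.
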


In our applications, $p = 2$, $e = 1$ $n = 1$, so $m_{F/E} \le 1$.  In particular, if $F/E$ is abelian then $\gf(F/E) \le 2$.

\begin{prop} \label{Tame}
Let $E/\Q_2$ be unramified and let $L/E$ be a Galois extension with Galois group $G = \Gal(L/E)$ and trivial tame ramification.  
\begin{enumerate}[{\rm i)}]
\item Then $m_{L/E} \le 1$ if and only if $G_2 = \{1\}$.  If so, the inertia group $\cI(L/E) = G_0 = G_1$ is an elementary $2$-group.   \vspace{4 pt}
\item Suppose that $L \supset F \supset E$, with $F/E$ also Galois.  Then $m_{L/E} \le 1$ if and only if both $m_{L/F} \le 1$ and $m_{F/E} \le 1$.
\end{enumerate}
\end{prop}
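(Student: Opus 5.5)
The plan is to use the standard structure of ramification groups for a Galois extension of local fields. Write $G_0 \supseteq G_1 \supseteq G_2 \supseteq \cdots$ in lower numbering. Since $E/\Q_2$ is unramified, the valuation on $L$ normalized so that $v_L(\pi_E)=e(L/E)=\vv{G_0}$ governs everything. Trivial tame ramification means $G_0 = G_1$, so $G_0$ is a $2$-group and $\varphi_{L/E}(u)=u$ for $0\le u\le 1$.

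For part (i), suppose first $G_2=\{1\}$. Then $c_{L/E}\le 1$, and $m_{L/E}=\varphi_{L/E}(c_{L/E})\le \varphi_{L/E}(1)=1$. Conversely, suppose $G_2\neq 1$. Then $c = c_{L/E}\ge 2$. Since $\varphi$ is strictly increasing and equals the identity on $[0,1]$, we get $m_{L/E}=\varphi(c)>\varphi(1)=1$. When $G_2=1$, the inertia group $G_0=G_1$ embeds in $G_1/G_2$. By \cite[IV, Prop.~7]{Ser}, $G_1/G_2$ injects into the additive group of the residue field of $L$, a vector space over $\F_2$. Hence $G_0$ is an elementary abelian $2$-group.

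For part (ii), I would work with upper numbering, which behaves well under quotients. Set $H=\Gal(L/F)$. Then $m_{L/E}\le 1$ is equivalent to $G^{1+\epsilon}=1$ for all $\epsilon>0$. Herbrand's theorem gives $(G/H)^\alpha = G^\alpha H/H$, so $m_{L/E}\le1$ implies $m_{F/E}\le 1$. For $L/F$, use the lower numbering: $H_u = G_u\cap H$. From (i), $G_2=1$ gives $H_2=1$. However, $F/E$ may be ramified, and then $v_L$ relative to $F$ is not the issue. The lower numbering of $H$ is taken with respect to $v_L$ in both cases, so $H_u=G_u\cap H$ holds with no renormalization. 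Since $\varphi_{L/F}$ is the identity on $[0,1]$ (because $H_0=H_1$), this gives $m_{L/F}\le 1$.

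The converse is the main obstacle. Assume $m_{L/F}\le1$ and $m_{F/E}\le1$, and suppose $G_2\ne1$. Then $H_2=G_2\cap H=1$, so $G_2$ injects into $G/H$, and its image is $(G/H)_{v}$ for the appropriate index. Here I would use the transitivity formula $\varphi_{L/E}=\varphi_{F/E}\circ\varphi_{L/F}$. Because $H_0=H_1$ and $H_2=1$, $\varphi_{L/F}(u)=1+(u-1)/\vv{H_0}$ for $u\ge1$. The quotient formula $G_uH/H=(G/H)_{\varphi_{L/F}(u)}$ then shows the following. Any $\sigma\in G_u$ with $u>1$ maps nontrivially to $(G/H)_{w}$ with $w=\varphi_{L/F}(u)>1$. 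This forces $c_{F/E}>1$ and hence $m_{F/E}=\varphi_{F/E}(c_{F/E})>1$, a contradiction. Some care is needed when $c_{L/E}$ lies strictly between $1$ and $2$: lower breaks need only be integers, so $c\ge2$ anyway. Both jumps also need to be checked in the transfer through $\varphi_{L/F}$. Alternatively, one can argue directly in upper numbering: $G^{\alpha}\cap H$ is $H^{\psi(\alpha)}$ for a suitable $\psi$, and with both pieces cut off at $1$, $G^{1+\epsilon}$ has trivial image and trivial intersection with $H$.
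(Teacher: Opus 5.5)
Your proposal is correct and follows essentially the same route as the paper: part (i) uses $G_0 = G_1$ to make $\varphi_{L/E}$ the identity on $[0,1]$ and strictly increasing, and part (ii) uses $H_u = G_u \cap H$ together with Herbrand's compatibility of ramification groups with quotients. Your converse in (ii) runs through lower numbering on $\Gal(F/E)$ and tacitly uses that $F/E$ also has trivial tame ramification, which holds because $\Gal(F/E)_0 = G_0H/H = G_1H/H = \Gal(F/E)_1$; the paper instead reads off the exact sequence $1 \to H_\alpha \to G^\beta \to \ov{G}^{\,\beta} \to 1$ directly, which is exactly your stated alternative.
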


\begin{proof} \cite[Lemma 6]{BK1}. 
It suffices to assume that $L/E$ has some wild ramification.  Suppose that $m_{L/E} \le 1$.  Let $g_j$ be the order of the $j$-th non-trivial ramification group $G_j$ for $0 \le j \le c$, with $G_{c+1} = \{1\}$.  Since tame ramification is trivial, $g_0 = g_1$.  If $c \ge 2$, the Herbrand function gives a contradiction:
$$
m = \varphi_{L/E}(c) \ge \frac{1}{g_0}(g_1+g_2) = 1 + \frac{g_2}{g_1} > 1,  
$$ 
Thus $m = c = 1$ and $G_2 = \{1\}$.  Then $G_1$ is an elementary $2$-group, since that holds for successive quotients in the ramificaton filtration.  For the converse, $c_{L/E} = 1$, so $m_{L/E} = \varphi_{L/E}(1) = 1$ and (i) is proved.

For item (ii), let $H = \Gal(L/F)$ and $\ov{G} = \Gal(F/E)$.  For $\alpha \ge 0$, let $\beta = \varphi_{L/E}(\alpha)$, so that $G^\beta = G_\alpha$.  Since the upper ramification numbering is compatible with quotients and $H_\alpha = H \cap G_\alpha$, the sequence 
$$
1 \to H_\alpha \to G^\beta \to \ov{G}^{\, \beta} \to 1
$$
is exact.  If $m_{L/E} \le 1$, then $G^{\beta} = \{1\}$ for all $\beta > 1$.  The same therefore holds for $\ov{G}^{\, \beta}$ and so $m_{F/E} \le 1$.  Also, $G_{2} = \{1\}$ by (i), so $H_2 = \{1\}$ and thus $m_{L/F} \le 1$.  Conversely, given $m_{F/E} \le 1$, we have $\ov{G}^{\, \beta} = \{1\}$ for all $\beta >1$.  Also given $m_{L/F} \le 1$, we have $H_2 = \{1\}$.  Hence $G^\beta = \{1\}$.
\end{proof}

\vspace{5 pt}

We now apply these local results to a number field $K$ satisfying Hypothesis {\bf K}. 

\begin{Not} \label{cFL1L2}
The following notation is preserved throughout this paper:
\begin{enumerate}[\, $\bullet$]
\item The {\em Abrashkin-Fontaine extension} $\cF$ is the maximal extension of $K$ unramified at all finite places outside $2$ and satisfying 
\begin{equation} \label{cFRamBd}
\cD_{\gP}(\cF/K)^\beta = \{1\}
\end{equation}
for the decomposition group at each primes $\gP$ over $2$ in $\cF$ and all $\beta > 1$.  \vspace{2 pt}

\item Let $L_0 = K$.  For $j \ge 0$, let $L_{j+1}$ be the maximal abelian extension of $L_j$ in $\cF$, namely the fixed field of the commutator subgroup $\Gal(\cF/L_j)'$ of $\Gal(\cF/L_j)$.   
\end{enumerate}
\end{Not}

For subfields $F$ of $\cF$ abelian over $K$, \eqref{cFRamBd} implies that the ray class modulus of $F/K$ divides $4\infty$ in $\cO$.  By Theorem \ref{FontThm}, if $V$ is a finite flat group scheme of exponent 2 over $\cO$, then the field of points $K(V)$ is contained in $\cF$.   If, in addition, $V$ is prosaic, then $K(V)$ is contained in the maximal Galois 2-extension $\cF_2$ of $K$ in $\cF$.

\begin{Not}
For a Galois extension of number fields $F/E$ and a prime $\gP$ of $F$, let $m_{F/E}(\gP)$ be the upper number of the last non-trivial ramification group at $\gP$, as in \eqref{LastRam}.  Thus, the inertia group at $\gP$ satisfies $\cI_\gP(F/E)^\beta = \{1\}$ for all $\beta > m_{F/E}(\gP)$.  
\end{Not}  

\begin{prop} \label{Elem2}
The maximal abelian $2$-extension $L$ of $K$ inside $L_1$ is the elementary $2$-extension $K(\{\sqrt{u} \mid u \in \cO^\times\})$ of rank $r_1+r_2$, where $r_1$ and $r_2$ are the number of real and complex places of $K$.
\end{prop}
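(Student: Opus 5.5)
The plan is to compute everything via class field theory for the modulus $4\infty$, using Hypothesis {\bf K} and the conductor bound from Theorem \ref{FontThm}. First I identify $L$ as the $2$-part of a ray class field. By Notation \ref{cFL1L2} and the remark after it, every subfield of $\cF$ abelian over $K$ has ray class modulus dividing $4\infty$. Conversely, any abelian extension of conductor dividing $4\infty$ has local conductor exponent $\le 2$ at each $\gp \mid 2$. Since $\Delta_K$ is odd, $K_\gp/\Q_2$ is unramified, so that bound means $m \le 1$, i.e.\ \eqref{cFRamBd} holds; such an extension therefore lies in $\cF$. Hence $L_1$ is the ray class field of modulus $4\infty$, and $L$ is the fixed field of the odd part of the ray class group $Cl_{4\infty}$.

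Next I show that the $2$-part of $Cl_{4\infty}$ is elementary. Because $h_K^+=1$, the map $Cl_{4\infty}\to Cl_\infty=Cl^+$ has trivial target. Its kernel is the image of $(\cO/4\cO)^\times$, taken together with the sign conditions at the real places absorbed by the narrow class group. The $2$-part of $(\cO/4\cO)^\times=\prod_\gp (\cO_\gp/4)^\times$ is $\prod_\gp(1+2\cO_\gp)/(1+4\cO_\gp)\cong \cO/2\cO$, which has exponent $2$. Thus $\Gal(L/K)$ is an elementary abelian $2$-group, and $L$ is the compositum of all quadratic extensions of $K$ of conductor dividing $4\infty$.

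Then I determine these quadratic extensions via Kummer theory, writing each as $K(\sqrt\alpha)$.
\begin{enumerate}[{\rm (a)}]
\item Unramified at odd primes means $(\alpha)=\mathfrak a^2\mathfrak b$ with $\mathfrak b$ supported over $2$. Since $h_K=1$, we may write $\alpha = u\pi\beta^2$ with $u\in\cO^\times$ and $\pi$ a product of generators of distinct primes over $2$.
\item The local conductor exponent at $\gp$ must be $\le 2$. This is the local step I expect to need the most care. Over an unramified extension $E/\Q_2$, if $v_\gp(\alpha)$ is odd, then $E(\sqrt\alpha)/E$ has conductor exponent $3$, as for $\Q_2(\sqrt 2)$: the uniformizer can be taken to be $2$, and $x^2-2u$ is Eisenstein with different exponent $3$. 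If instead $\alpha$ is a $\gp$-unit, the conductor exponent is at most $2e=2$, by the standard bound $1+4\cO_\gp\subset \cO_\gp^{\times2}$.
\item Hence $\pi=1$, so every such extension is $K(\sqrt u)$ with $u\in\cO^\times$. By (b), every unit does give conductor dividing $4\infty$, so all $K(\sqrt u)$ occur.
\end{enumerate}

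Finally, I count the rank. Kummer theory gives $\Gal(L/K)$ dual to $\cO^\times/\cO^{\times2}$. By Dirichlet's unit theorem, $\cO^\times\cong\mu_K\times\Z^{r_1+r_2-1}$ with $\mu_K$ of even order, so $\cO^\times/\cO^{\times2}$ has $\F_2$-rank $r_1+r_2$. This gives $L=K(\{\sqrt u\mid u\in\cO^\times\})$ of rank $r_1+r_2$, as claimed.
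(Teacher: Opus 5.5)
Your route works, and it differs from the paper's in the first half. However, one justification in step (b) is false and has to be replaced.

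\textbf{The false step.} In (b) you justify $\gf\le 2$ for a $\gp$-unit by the inclusion $1+4\cO_\gp\subset\cO_\gp^{\times 2}$. That inclusion fails: $5=1+4$ is not a square in $\Q_2$, since odd squares are $\equiv 1 \bmod 8$. What is true is $1+8\cO_\gp\subset\cO_\gp^{\times2}$, and that only gives $\gf\le 3$. Your own first half of (b) shows the inclusion cannot hold: it would give every quadratic extension of $K_\gp$, including $K_\gp(\sqrt 2)$, conductor exponent at most $2$.

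\textbf{The fix.} Let $w=1+4y$. Writing $\sqrt w=1+2z$ gives $z^2+z=y$, so $K_\gp(\sqrt w)/K_\gp$ is unramified. Hence every unit $u$ is a norm from it, so the Hilbert symbol $(u,w)$ is trivial. Therefore $1+4\cO_\gp$ lies in the norm group of $K_\gp(\sqrt u)$, which gives $\gf\le 2$.

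A direct computation also works. After multiplying $u$ by the square of a unit (the residue field is perfect), write $u=1+2w$. If $w\in\gp\cO_\gp$, the extension is unramified. If $w$ is a unit, then $\sqrt u-1$ is a root of the Eisenstein polynomial $z^2+2z-2w$. Its derivative at that root is $2\sqrt u$, of valuation $2$ in the extension, so the different and hence the conductor exponent is $2$. This is exactly the local fact the paper cites from \cite{BK3}. With this repair your proof is complete.

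\textbf{Comparison with the paper.} For the elementarity of $\Gal(L/K)$, the paper never identifies $L_1$ with a ray class field. It uses Proposition~\ref{Tame}(i) to see that each inertia group over $2$ is elementary abelian. In the abelian group $\Gal(L/K)$ these inertia groups generate an elementary subgroup. Then $h_K^+=1$ forces that subgroup to be all of $\Gal(L/K)$, since otherwise there would be a quadratic extension unramified at all finite places.

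You instead compute $L_1$ as the ray class field mod $4\infty$. You read elementarity off the surjection $(\cO/4\cO)^\times\to Cl_{4\infty}$, which holds because the narrow class group is trivial. The $2$-part of the source is $\prod_\gp(1+2\cO_\gp)/(1+4\cO_\gp)$.

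These are two views of one fact: the image of $(1+2\cO_\gp)/(1+4\cO_\gp)$ is the inertia group at $\gp$. Your version is more explicit and avoids the Herbrand-function argument. The paper's inertia-group formulation is the one that carries over to the non-abelian $\Gal(\cF_2/K)$ in Lemma~\ref{cF2Lemma} and Propositions~\ref{D1Type} and~\ref{D2Type}. Your Kummer-theoretic half matches the paper's.
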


\begin{proof}
Factor $2 \cO= \gp_1 \cdots \gp_n$ into distinct primes in $\cO$ and choose a prime $\gP_j$ of $L$ over each $\gp_j$.  Since $L/K$ is a $2$-extension, tame ramification at $\gP_j$ is trivial and so the inertia groups $\cI_j = \cI_{\gP_j}(L/K)$ are elementary 2-groups by Proposition \ref{Tame}(i).  By assumption, $L/K$ is abelian, so the subgroup $\cG$ of $\Gal(L/K)$ generated by all $\cI_j$ for $1 \le j \le n$ also is an elementary 2-group.   Suppose that $\cG$ is not all of $\Gal(L/K)$ and let $H$ be a subgroup of index 2 in $\Gal(L/K)$ containing $\cG$.  Then the fixed field $L^H$ is a quadratic extension of $K$, unramified at all $\gP_j$ and therefore unramified at all places over 2.  But $L$ already is unramified at other finite places and the narrow class group of $K$ is trivial, so we have a contradiction.  Hence $\Gal(L/K)$ is an elementary 2-group.

Because the class group of $K$ is trivial and $L/K$ does not ramify at odd places, each quadratic extension of $K$ inside $L$ has the form $K(\sqrt{u})/K$, where the Kummer generator $u$ is a unit outside primes over $2$.  To satisfy $m_{L/K}(\gP) \le 1$  for each prime $\gP$ over 2 in $L$, it is necessary and sufficient that, up to squares, $u$ also is a unit at $\gP$.  (For example, see \cite[Lemma C.6]{BK3}, verifying \cite[Exercise 3, p.\! 79]{Ser}.)  Hence we can take $u$ to range over the units of $K$.  The rank follows from the Dirichlet unit theorem. 
\end{proof}

Recall that $K^\gp$ is the witness field defined in Notation \ref{Kp}.  In Lemma \ref{cF2Lemma} and Propositions \ref{D1Type} and \ref{D2Type} below, $\gp$ denotes a prime divisor of $2\cO$ and $\gP$ is a prime over $\gp$ in $\cF_2$.  

\begin{lem} \label{cF2Lemma}
The inertia group  $\cI_\gP(\cF_2/K)$ is an elementary $2$-group.
\end{lem}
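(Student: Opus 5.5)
The plan is to pass to the completion at $\gP$ and apply Proposition \ref{Tame}(i). Let $\gp = \gP \cap \cO$. By Hypothesis \textbf{K}, $K$ has odd discriminant, so $K_\gp/\Q_2$ is unramified. For a finite Galois subextension $F/K$ of $\cF_2$, the decomposition group $\cD_\gP(F/K)$ is $\Gal(F_\gP/K_\gp)$, and the inertia group $\cI_\gP(F/K)$ is its ramification subgroup $G_0$. Since $\cF_2$ is the infinite union of such $F$, and the inertia group of $\cF_2/K$ is the inverse limit of the inertia groups of its finite Galois layers, it suffices to show that each $\cI_\gP(F/K)$ is an elementary $2$-group. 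Indeed, an inverse limit of elementary abelian $2$-groups under surjective restriction maps is again abelian of exponent $2$.

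Fix such an $F$ and set $E = K_\gp$, $L = F_\gP$, $G = \Gal(L/E)$. The key steps are the following.

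First, $F/K$ is a $2$-extension, because $F \subseteq \cF_2$. Hence $G$ is a $2$-group, and $G_0/G_1$, which has odd order, is trivial. So there is no tame ramification.

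Second, the defining condition \eqref{cFRamBd} of $\cF$ says $\cD_\gP(\cF/K)^\beta = \{1\}$ for all $\beta>1$. Upper numbering is compatible with quotients, so the restriction maps $\cD_\gP(\cF/K)^\beta \to \Gal(L/E)^\beta$ are surjective. Therefore $G^\beta = \{1\}$ for all $\beta>1$, that is, $m_{L/E}\le 1$.

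Third, Proposition \ref{Tame}(i) now applies, since $E/\Q_2$ is unramified and tame ramification is trivial. It gives $G_2=\{1\}$ and shows that $\cI(L/E) = G_0 = G_1$ is an elementary $2$-group.

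There is no serious obstacle here. The only point needing care is the bookkeeping of upper numbering in the infinite extension $\cF$: condition \eqref{cFRamBd} has to be read through the finite Galois quotients, using surjectivity of the restriction maps on upper ramification groups as recalled in \S\ref{Font}. With that in place, the passage to the limit is formal.
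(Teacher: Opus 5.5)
Your proposal is correct and follows the paper's proof. The bound \eqref{cFRamBd} gives $m\le 1$ at $\gP$, the $2$-group structure of $\Gal(\cF_2/K)$ rules out tame ramification, and Proposition~\ref{Tame}(i) then makes the inertia group elementary abelian. Your reduction to finite Galois layers $F\subseteq\cF_2$ and the passage to the inverse limit is bookkeeping the paper leaves implicit, and you handle it correctly; the only nit is that the decomposition group in \eqref{cFRamBd} should be taken at a prime of $\cF$ lying above $\gP$.
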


\begin{proof}
The ramification bound \eqref{cFRamBd} implies that $m_{\cF_2/K}(\gP) \le 1$ for every prime $\gP$ over 2 in $\cF_2$.  Since $\Gal(\cF_2/K)$ is a 2-group, tame ramification over $\gp$ is trivial, so $\cI_\gP(\cF_2/K)$ is elementary abelian by Proposition \ref{Tame}(i).
\end{proof}

\begin{prop}  \label{D1Type}
If $K^\gp = K$ for some $\gp$, then $\cF_2$ is the maximal $2$-extension of $K$ in $L_1$ and $\cF_2/K$ is elementary abelian of rank $r_1+r_2$.  There is a unique $\gP$ over $\gp$ and it is totally ramified in $\cF_2/K$.
\end{prop}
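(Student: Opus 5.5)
The strategy is to show that $\Gal(\cF_2/K)$ has trivial commutator subgroup, i.e.\ that $\cF_2$ coincides with its maximal abelian subextension $\cF_2 \cap L_1$. Proposition \ref{Elem2} then identifies $\cF_2$ with $K(\{\sqrt{u}\mid u\in\cO^\times\})$, which is elementary abelian of rank $r_1+r_2$. The tool is a Frattini-type argument: in a pro-$2$ group $G$, a subgroup surjecting onto $G/\Phi(G)$ is all of $G$. It suffices to treat finite Galois $2$-extensions $M/K$ inside $\cF_2$ and pass to the limit.

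\emph{Step 1: the maximal elementary quotient.} Let $M\subseteq\cF_2$ be finite Galois over $K$ with group $G$, and let $M_0$ be the fixed field of $\Phi(G)$. Then $M_0$ is the compositum of the quadratic subextensions of $M/K$, so $M_0 \subseteq L_1$, and Proposition \ref{Elem2} gives $M_0 \subseteq K(\sqrt{u}\mid u\in \cO^\times)$. Every quadratic $K(\sqrt{u})$ with $u$ a unit has ray class conductor dividing $4\infty$. Next we use the hypothesis $K^\gp = K$, i.e.\ no nontrivial quadratic extension has conductor dividing $\infty\gc^2$ with $\gc = 2\gp^{-1}$. This says that every nontrivial quadratic subextension of $M_0/K$ has conductor divisible by $\gp^2$; in particular each is ramified at $\gp$. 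Hence no quadratic subextension of $M_0/K$ is unramified at $\gp$, so the inertia group $\cI_\gp(M_0/K)$, which is generated inside the elementary abelian group $\Gal(M_0/K)$, is not contained in any index-$2$ subgroup. Therefore $\cI_\gp(M_0/K) = \Gal(M_0/K)$.

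\emph{Step 2: lifting.} Choose $\gP$ over $\gp$ in $M$. Inertia maps onto inertia in quotients, so $\cI_\gP(M/K)$ surjects onto $\Gal(M_0/K) = G/\Phi(G)$. By the Frattini argument, $\cI_\gP(M/K) = G$. Lemma \ref{cF2Lemma} says this inertia group is elementary abelian, so $G$ is elementary abelian and $M \subseteq M_0$. Passing to the limit, $\cF_2 \subseteq L_1$, which proves the first claim and the rank statement.

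\emph{Step 3: the prime over $\gp$.} Since $\cI_\gP(\cF_2/K)$ is the whole Galois group, the decomposition group is also the whole group. Hence $\gP$ is the unique prime over $\gp$, and it is totally ramified.

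The main obstacle is Step 1: checking carefully that $K^\gp=K$ forces every unit quadratic extension to be ramified at $\gp$. This requires $\gc^2$ to absorb the full conductor contribution at the other primes above $2$, which holds because their exponents are at most $2$ and $\gc^2 = 4\gp^{-2}$ keeps exponent $2$ away from $\gp$. The infinite part of the modulus is unrestricted, and odd primes do not occur since units have no odd ramification.
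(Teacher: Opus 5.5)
Your proof is correct and is essentially the paper's argument. The paper also shows $\cI_\gP(\cF_2/K)=\Gal(\cF_2/K)$: if not, inertia would lie in an index-$2$ subgroup whose fixed field is a quadratic extension unramified at $\gp$ with conductor dividing $\infty\gc^2$, contradicting $K^\gp=K$. It then concludes with Lemma \ref{cF2Lemma}, exactly as you do; your Frattini-quotient formulation over finite layers is just a more explicit version of that maximal-subgroup step.
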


\begin{proof} 
If $\cI_\gP = \cI_\gP(\cF_2/K)$ is not all of $G = \Gal(\cF_2/K)$, then it is contained in a maximal proper subgroup $H$ of $G$.  By \eqref{cFRamBd}, the fixed field of $H$ satisfies the ray class conductor condition to be a subfield of $K^\gp$ and therefore is $K$, contradicting $\fdeg{G}{H}=2$.  Hence $G = \cI_\gP$ and $\gP$ is the only prime over $\gp$ in $\cF_2$, totally ramified in $\cF_2/K$.  By Lemma \ref{cF2Lemma}, $G$ is an abelian group, so $\cF_2$ is contained in $L_1$.  But the maximal 2-extension of $K$ in $L_1$ certainly is contained in $\cF_2$ and is therefore equal to $\cF_2$. The rank of $\Gal(\cF_2/K)$ is given by Proposition \ref{Elem2}.
\end{proof}  

\begin{prop}  \label{D2Type}
Assume that $\fdeg{K^\gp}{K}=2$, with $\gp$ inert in $K^\gp/K$ and let $P$ be the product of the distinct primes over $2$ in $K^{\gp}$.  The following properties hold.
\begin{enumerate}[{\rm i)}]
\item $G = \Gal(\cF_2/K)$ is the decomposition group $\cD_\gP = \cD_\gP(\cF_2/K)$ and the inertia group $\cI_\cP = \cI_\gP(\cF_2/K)$ is an elementary $2$-group with fixed field $K^\gp$.  \vspace{2 pt}

\item  $G$ is at most two-step nilpotent and $\cF_2$ is the maximal $2$-extension of $K$ in $L_2$.  

\vspace{2 pt}

\item $\cF_2$ is the maximal abelian $2$-extension of $K^\gp$ with ray class modulus $\infty P^2$. 
\end{enumerate}
\end{prop}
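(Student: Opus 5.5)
Fix a prime $\gP$ of $\cF_2$ over $\gp$ and set $G=\Gal(\cF_2/K)$, $\cI=\cI_\gP(\cF_2/K)$, $\cD=\cD_\gP(\cF_2/K)$. The plan is to run the argument of Proposition \ref{D1Type} with the witness field $K^\gp$ now quadratic, and then repeat it one level up over $K^\gp$. I would prove (i) first, then (iii), and deduce (ii) from (iii) together with Lemma \ref{cF2Lemma}.

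\emph{Step 1: $\cI$ has index at most $2$, with fixed field $K^\gp$.} Let $M$ be the fixed field of $\cI\Phi(G)$, where $\Phi(G)$ is the Frattini subgroup. Then $M/K$ is an elementary abelian $2$-extension in which $\gP$ is unramified, so $M/K$ is unramified over $\gp$. At the other primes $\gq\mid 2$, the bound \eqref{cFRamBd} gives conductor exponent at most $2$. Primes away from $2$ are unramified, and $h_K^+=1$ allows the real places to ramify. Hence the ray class modulus of $M/K$ divides $\infty\gc^2$, so $M\subseteq K^\gp$. Since $G/\cI\Phi(G)$ is the Frattini quotient of $G/\cI$, we get $[G:\cI]\le 2$, with equality exactly when $M=K^\gp$.

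\emph{Step 2: $\cI\neq G$, and $G=\cD$.} Suppose $\cI=G$. Then $G$ is abelian by Lemma \ref{cF2Lemma}, so $\cF_2\subseteq L_1$ and $\cF_2/K$ is elementary. Then $K^\gp\subseteq\cF_2$ would be ramified at $\gp$, but $\gp$ is inert in $K^\gp$, a contradiction. So $\cI$ has index $2$ with fixed field $K^\gp$. For $G=\cD$, note that $\gp$ is inert in $K^\gp$, so $\cD$ surjects onto $\Gal(K^\gp/K)$. Together with $\cI\subseteq\cD$ and $[G:\cI]=2$, this forces $\cD=G$. This proves (i), with $\cI$ elementary by Lemma \ref{cF2Lemma}.

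\emph{Step 3: (iii).} The extension $\cF_2/K^\gp$ has group $\cI$, which is elementary abelian, so $\cF_2$ is an abelian extension of $K^\gp$. By Proposition \ref{Tame}(ii) with $F=K^\gp$, $m_{\cF_2/K^\gp}\le 1$ at every prime over $2$, so the conductor exponent is at most $2$ at each prime dividing $P$. Also $\cF_2/K^\gp$ is unramified at odd primes. Hence $\cF_2$ lies in the maximal abelian $2$-extension $N$ of $K^\gp$ with modulus $\infty P^2$.

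Conversely, $N$ is Galois over $K$ because the modulus is $\Gal(K^\gp/K)$-stable. Since $K^\gp/K$ is unramified at $2$, base change to $K^\gp$ is unramified and does not disturb upper numbering. Therefore Proposition \ref{Tame}(ii) gives $m_{N/K}\le 1$, so $N\subseteq\cF$, and $N$ is a $2$-extension of $K$. Thus $N\subseteq\cF_2$.

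I expect the main obstacle to be making precise, in this step, that conductor exponent at most $2$ is equivalent to $m\le 1$ for abelian $2$-extensions. The tool is $\gf=m+1$. Over $K^\gp$, which is unramified over $\Q_2$ at the primes over $2$, the equivalence holds; the one point to check is that Proposition \ref{Tame} requires trivial tame ramification, and this is automatic for $2$-extensions.

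\emph{Step 4: (ii).} Since $\cI$ is abelian of index $2$, the subgroup $G'\subseteq\cI$ is abelian, so $G''=1$. Moreover $G/\cI$ is cyclic of order $2$ and acts on the abelian group $\cI$. Hence $G'=[G,\cI]$, consisting of elements $\sigma x\sigma^{-1}x^{-1}$, and $[G,G']$ is $(\sigma-1)^2$ applied to $\cI$. Write $\sigma^2=s$. Then $(\sigma-1)^2=\sigma^2-2\sigma+1=s+1$, acting on the elementary group $\cI$ written additively. To conclude two-step nilpotency I would need $s$ to act trivially, i.e.\ $(\sigma-1)^2=0$ on $\cI$. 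This holds because $\sigma^2=s\in\cI$ and $\cI$ is abelian, so conjugation by $s$ is trivial. Hence $(\sigma-1)^2=\sigma^2-1=0$ in characteristic $2$.

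So $G$ is at most two-step nilpotent and metabelian, giving $\cF_2\subseteq L_2$. Conversely, $L_2$ contains no odd-degree piece that would obstruct the comparison: the maximal $2$-extension of $K$ inside $L_2$ is contained in $\cF_2$ by definition. Hence $\cF_2$ is the maximal $2$-extension of $K$ in $L_2$.
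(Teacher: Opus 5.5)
Step 1 has a genuine gap, and everything after it depends on it. From $[G:\cI\Phi(G)]\le 2$ you cannot conclude $[G:\cI]\le 2$, for two reasons.

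\begin{enumerate}
\item At that stage $\cI$ is only known to be normal in $\cD$, not in $G$, so $G/\cI$ is not yet a group.
\item Even for normal $\cI$, a Frattini quotient of order at most $2$ only says that $G/\cI$ is cyclic. For example, take $G=\Z/4\Z$ and $\cI=1$.
\end{enumerate}

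Without normality the inference fails even when $G^{ab}$ is elementary. In the dihedral group of order $8$, a non-central subgroup of order $2$ has index $4$, yet together with $\Phi(G)=G'$ it generates a subgroup of index $2$. In field terms, nothing in Step 1 rules out the inertia field of $\gP$ being a cyclic quartic extension of $K$ containing $K^\gp$. Step 2 obtains $G=\cD$ from $[G:\cI]=2$, so part (i) is not established. Parts (ii) and (iii) then inherit the gap, since they use $\Gal(\cF_2/K^\gp)=\cI$.

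The repair needs two ingredients you never invoke.

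First, get $G=\cD$ by applying the Frattini argument to $\cD$ rather than $\cI$. If $\cD\neq G$, then $\cD$ lies in a maximal subgroup whose fixed field is quadratic over $K$, split over $\gp$, with conductor dividing $\infty\gc^2$. That field must be $K^\gp$, contradicting that $\gp$ is inert. Within your own setup this reads as follows. If $\cI\Phi(G)=G$ then $\cI=G$, which you exclude. Otherwise the fixed field of $\cI\Phi(G)$ is $K^\gp$. Since $\cD$ maps onto $\Gal(K^\gp/K)$, $\cD$ is not contained in $\cI\Phi(G)$, so $\cD\Phi(G)=G$ and hence $\cD=G$.

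Second, $\cI$ is now normal in $G=\cD$, and $G/\cI$ is cyclic, generated by a Frobenius. Proposition \ref{Elem2} says every abelian $2$-extension of $K$ inside $\cF$ is elementary. This forces the inertia field to have degree at most $2$ over $K$. It contains $K^\gp$ because $\gp$ is unramified there, so it equals $K^\gp$. This is the paper's proof of (i).

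Two smaller remarks.

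\begin{enumerate}
\item In Step 3 you say $K^\gp/K$ is unramified above $2$; that is false. Because $h_K^+=1$, the nontrivial extension $K^\gp/K$ must ramify at some prime dividing $\gc$. The error is harmless. Proposition \ref{Tame}(ii) needs only $m_{K^\gp/K}\le 1$ at each prime over $2$, which follows from the conductor of $K^\gp/K$ dividing $\infty\gc^2$. Also, $\gf=m+1$ holds for abelian extensions of any local field, so no unramifiedness of $K^\gp$ over $\Q_2$ is needed.
\item Your Step 4 is correct. The computation $(\sigma-1)^2=0$ on $\cI$ spells out why $G$ has nilpotency class at most two. The paper asserts this after noting only that $G'$ and $G^{ab}$ are elementary, which by itself would give only that $G$ is metabelian.
\end{enumerate}
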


\begin{proof}
If $\cD_\gP$ is a proper subgroup of $G$, then it is contained in a maximal proper subgroup $H$.   The fixed field $\cF_2^H$ is a quadratic extension of $K$ in which $\gp$ splits.  By \eqref{cFRamBd}, the ray class conductor of $\cF_2^H$ over $K$ divides $\infty \gc^2$, where $\gc = 2\gp^{-1}$ and so $\cF_2^H = K^\gp$.  Since $\gp$ is inert in $K^\gp/K$, we have a contradiction.  

Hence $G = \cD_\gP$ and by Lemma \ref{cF2Lemma}, $\cI_\gP$ is an elementary 2-group.  The quotient $\cD_\gP/\cI_\gP$ is generated by the image of a Frobenius at $\gP$ and so the fixed field $F$ of $\cI_\gP$ is cyclic over $K$.   Then $\fdeg{F}{K} \le 2$ by Lemma \ref{Elem2}.  However, $\gp$ does not ramify in $K^{\gp}/K$, so $K^\gp$ is contained in $F$.  Hence  $F = K^{\gp}$ and (i) is proved.
 
Certainly, $G/\cI_\gP = \Gal(K^\gp/K)$ is abelian, so the commutator $G'$ is contained in $\cI_\gP$ and therefore is elementary abelian.  The abelianization $G/G'$ is elementary abelian by Lemma \ref{Elem2}.  Hence $G$ is at most two-step nilpotent and so $\cF_2$ is contained in $L_2$.  Item (ii) now follows.  

By \eqref{cFRamBd}, $m_{\cF_2/K}(\gQ) \le 1$ for each prime $\gQ$ over $2$ in $\cF_2$.   Then Proposition \ref{Tame}(ii) implies that $m_{\cF_2/K^\gp}(\gQ) \le 1$.  Since $\cF_2/K^\gp$ is abelian, its ray class conductor divides $\infty P^2$.  Hence $\cF_2$ is contained in the maximal abelian $2$-extension $M$ of $K^\gp$ with that conductor.  Conversely, the given conductor implies that $m_\gq(M/K^\gp) \le 1$ for each prime $\gq$ over 2 in $M$.  It follows from Proposition \ref{Tame}(ii) that $m_\gq(M/K) \le 1$.  Hence $M$ is contained in $\cF_2$, so $M = \cF_2$ and (iii) holds.
\end{proof}

\section{Some group theory}  \label{GroupTheory}
Here we provide group theoretic tools to study the maximal solvable extension of $K$ inside $\cF$.  We write $G'$ for the commutator subgroup of a group $G$ and $G'' = (G')'$.  Then $G^{ab} = G/G'$ is the abelianization of $G$.

\begin{prop} \label{OddDihedral}
Let $\Gamma$ be a finite group with the following properties.
\begin{enumerate}[{\rm \, i)}]
\item $\Gamma'$ is abelian and an odd prime $p$ divides $\vv{\Gamma'}$. \vspace{2 pt}

\item $\Gamma^{ab}$ is an elementary $2$-group.  
\end{enumerate}
Then $\Gamma$ has a quotient isomorphic to the dihedral group $D_p$.
\end{prop}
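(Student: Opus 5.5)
The plan is to pass to a quotient of $\Gamma$ in which the commutator subgroup becomes a minimal normal $p$-group on which an involution acts by inversion, and then to cut down to $D_p$. Write $A = \Gamma'$, an abelian group whose order is divisible by $p$. Since $A$ is abelian, $\Gamma$ acts on $A$ by conjugation through $\Gamma/A = \Gamma^{ab}$, an elementary $2$-group. First replace $\Gamma$ by $\Gamma/N$, where $N$ is the product of the Sylow $q$-subgroups of $A$ for $q \ne p$ together with $pA_p$ (here $A_p$ is the Sylow $p$-subgroup). These are characteristic in $A$, hence normal in $\Gamma$. The commutator subgroup of $\Gamma/N$ is $A/N \cong A_p/pA_p$, which is a nontrivial elementary abelian $p$-group, and the abelianization is unchanged. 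So we may assume that $A$ is an $\F_p$-vector space of positive dimension on which $\Gamma^{ab}$ acts.

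Next I decompose $A$ under the action of $Q = \Gamma^{ab}$. Since $|Q|$ is a power of $2$ and $p$ is odd, Maschke applies, and because $Q$ is elementary abelian of exponent $2$, every irreducible $\F_p[Q]$-module is one-dimensional, given by a character $\chi : Q \to \{\pm 1\}$. Thus $A = \bigoplus_\chi A_\chi$. The trivial-character part must vanish. Indeed, $A = [\Gamma,\Gamma]$ is generated by commutators, and modulo the $\Gamma$-stable complement $\bigoplus_{\chi\neq 1}A_\chi$ the quotient $\Gamma/\bigoplus_{\chi \ne 1} A_\chi$ would have its commutator subgroup $A_1$ central. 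Here I need the small extra step that a group with $\Gamma/Z$ abelian of exponent $2$ and $[\Gamma,\Gamma]$ a central $p$-group forces $[\Gamma,\Gamma]=1$. This holds because commutators $[x,y]$ are bilinear in this situation, so $[x,y]^2 = [x^2,y] = 1$ as $x^2 \in A_1$ is central, and this is incompatible with odd order unless trivial. Hence some nontrivial $\chi$ occurs.

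Now I pick one line $\ell \subset A_\chi$ with $\chi \ne 1$ and quotient by a $\Gamma$-stable complement of $\ell$ in $A$; such a complement exists by complete reducibility. This gives a quotient $\bar\Gamma$ with commutator subgroup $\ell \cong \Z/p$, on which $\Gamma$ acts through $\chi$. Let $K_0 \subset \bar\Gamma$ be the preimage of $\ker\chi$; it has index $2$ and centralizes $\ell$.

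The step I expect to be the main obstacle is producing the final $D_p$ quotient, i.e.\ killing the index-$2$ part $K_0$ apart from $\ell$. For this I would show that $K_0/\ell$ lifts to a normal subgroup $M$ of $\bar\Gamma$ with $M\cap \ell = 1$. Since $K_0$ centralizes $\ell$ and $K_0/\ell$ is a $2$-group, Schur--Zassenhaus gives $K_0 = \ell \times M$, with $M$ the unique Sylow $2$-subgroup of $K_0$. So $M$ is characteristic in $K_0$ and hence normal in $\bar\Gamma$. Then $\bar\Gamma/M$ has order $2p$, with normal subgroup $\ell\cong\Z/p$ acted on by inversion (as $\chi \ne 1$), so it is $D_p$.
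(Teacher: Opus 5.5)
Your proof is correct and follows essentially the same route as the paper: reduce modulo $(\Gamma')^p$ to an elementary abelian $p$-commutator subgroup, and split it into $\pm 1$-eigenlines for the elementary abelian $2$-group acting on it. Then quotient by the complement $V$ of an eigenline with nontrivial character, together with the $2$-part centralizing that line. The differences are minor. The paper applies Schur--Zassenhaus globally to write $G = G' \rtimes H$ and quotients by $H_0 V$, and it argues the existence of a nontrivial character directly rather than via your bilinear-commutator argument. You instead apply Schur--Zassenhaus to the index-$2$ subgroup $K_0$ of the quotient, which makes the normality of the $2$-part you kill (the characteristic subgroup $M$) fully explicit.
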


\begin{proof}
Replace $\Gamma$ by $G = \Gamma/(\Gamma')^p$.  Then $G' = \Gamma'/(\Gamma')^p$ is an elementary abelian $p$-group and $G^{ab} \simeq \Gamma^{ab}$ is an elementary 2-group.  It suffices to show that $G$ has a $D_p$-quotient.  

If $H$ is any 2-Sylow subgroup of $G$, then $G = G' \rtimes H$ is a semidirect product of $H$ by the normal subgroup $G'$.  As an $\F_p[H]$-module, $G' = \oplus W_j$ is a direct sum of 1-dimensional eigenspaces for $H$.   Since $G$ is not abelian, there is at least one such eigenspace, say $W_1 = \lr{b} \simeq \F_p$ on which some involution $a$ in $H$ acts by inversion.  Since $G$ acts on $G'$ via $G/G' \simeq H$, the centralizer $H_0 = \{h \in H \mid hbh^{-1} = b \}$ of $b$ in $H$ is a normal subgroup of $G$ and $H/H_0 \simeq \lr{a}$.  Also, $V = \oplus_{j \ge 2} W_j$ is a normal subgroup of $G$ with $G'/V \simeq \lr{b}$ and thus $G/H_0 V \simeq \lr{a,b} \simeq D_p$.
\end{proof}

\begin{prop} \label{My9Lem}
Let $\Gamma$ be a finite group with the following properties.
\begin{enumerate}[{\rm i)}]
\item $\Gamma''$ is abelian and $\vv{\Gamma''}$ is divisible by an odd prime $p$.  \vspace{2 pt}
\item $G = \Gamma/\Gamma''$ is a $2$-group with a subgroup $H$ of index $2$ and exponent $2$.
\end{enumerate}
Let $\Gamma_0$ be the inverse image of $H$ under the projection map $\Gamma \to G$.  Then $\Gamma_0$ has a $D_p$-quotient.
\end{prop}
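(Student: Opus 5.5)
The plan is to reduce, via a quotient of $\Gamma$, to a situation where Proposition \ref{OddDihedral} applies directly to a quotient of $\Gamma_0$.

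\emph{Step 1: reduce $\Gamma''$ to an elementary abelian $p$-group.} Since $\Gamma''$ is abelian, write $\Gamma'' = P \times Q$, with $P$ its $p$-Sylow subgroup and $Q$ the subgroup of elements of order prime to $p$. Both $P$ and $Q$ are characteristic in $\Gamma''$, hence normal in $\Gamma$, and so is $N = Q P^p$. Replace $\Gamma$ by $\Gamma/N$. Then $\Gamma''$ becomes $V = P/P^p$, which is a nonzero elementary abelian $p$-group because $p$ divides $\vv{\Gamma''}$. The quotient $G$ is unchanged. A $D_p$-quotient of the new $\Gamma_0$ is also one of the old $\Gamma_0$, so it suffices to treat this case.

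\emph{Step 2: split $\Gamma_0$ and show $H$ acts nontrivially on $V$.} We have $V \subseteq \Gamma_0$ and $\Gamma_0/V \simeq H$, an elementary abelian $2$-group of order prime to $p$. By Schur--Zassenhaus, $\Gamma_0 = V \rtimes \tilde H$ with $\tilde H \simeq H$. Since $V$ is abelian and normal, $\Gamma$ acts on $V$ through $G$.

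Suppose $\tilde H$ centralizes $V$. Then $G$ acts through $G/H$, which has order at most $2$, so $G' \subseteq H$ acts trivially and $\Gamma'$ centralizes $V$. Now $\Gamma'/V = G'$ is a $2$-group, so again $\Gamma' = V \rtimes S$ with $S$ a $2$-group. This product is direct because $S$ centralizes $V$. Hence $\Gamma'' = (\Gamma')' = S'$ is a $2$-group. That contradicts $\Gamma'' = V \ne 1$. Therefore $[V,\tilde H] \ne 1$.

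\emph{Step 3: apply Proposition \ref{OddDihedral}.} Since $p$ is odd and $\tilde H$ is a $2$-group, coprime action (Maschke) gives $V = [V,\tilde H] \oplus C_V(\tilde H)$. Here $C_V(\tilde H)$ is normalized by $\tilde H$ and by the abelian group $V$, so it is normal in $\Gamma_0$. Set
$$\Gamma_1 = \Gamma_0/C_V(\tilde H) \simeq W \rtimes \tilde H, \qquad W = [V,\tilde H] \ne 1.$$
Because $\tilde H$ is abelian and $W$ is abelian, $\Gamma_1' = [W,\tilde H]$. Since $\tilde H$ has no nonzero fixed vectors on $W$, we get $[W,\tilde H] = W$. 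Thus $\Gamma_1'$ is abelian of order divisible by $p$, and $\Gamma_1^{ab} \simeq \tilde H$ is an elementary $2$-group. Proposition \ref{OddDihedral} then gives a $D_p$-quotient of $\Gamma_1$, hence of $\Gamma_0$.

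The main obstacle is Step 2: showing that $H$ does not centralize the $p$-part. This is where the index-$2$ hypothesis on $H$ is essential. The argument above handles it by pushing the centralizing action up to $\Gamma'$ and comparing $\Gamma''$ with a $2$-group.
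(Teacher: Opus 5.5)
Your proof is correct, but it takes a longer route than the paper's. The paper never reduces modulo $N$, never uses Schur--Zassenhaus or coprime action, and never argues by contradiction. It simply checks that $\Gamma_0$ itself satisfies the hypotheses of Proposition \ref{OddDihedral}:
\begin{itemize}
\item Since $\Gamma_0/\Gamma'' \simeq H$ is abelian, $\Gamma_0' \subseteq \Gamma''$.
\item Since $\Gamma/\Gamma_0 \simeq G/H$ has order $2$, $\Gamma' \subseteq \Gamma_0$, hence $\Gamma'' \subseteq \Gamma_0'$.
\end{itemize}
So $\Gamma_0' = \Gamma''$ is abelian of order divisible by $p$, and $\Gamma_0^{ab} \simeq H$ is an elementary $2$-group, so Proposition \ref{OddDihedral} applies directly.

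In your reduced setting, this identity $\Gamma_0' = \Gamma'' = V$, combined with the computation you do for $\Gamma_1$ (which gives $\Gamma_0' = [V,\tilde H]$ for $\Gamma_0 = V \rtimes \tilde H$), yields $[V,\tilde H] = V$ immediately. Then $C_V(\tilde H)$ is trivial and your $\Gamma_1$ is just $\Gamma_0$. Your Step~2 contradiction uses the index-$2$ hypothesis only through $G' \subseteq H$, which is essentially the same fact $\Gamma' \subseteq \Gamma_0$. So Step~2 and the Maschke splitting in Step~3 prove, with more machinery, something available in one line. Step~1 is also redundant, since the proof of Proposition \ref{OddDihedral} already reduces $\Gamma'$ modulo its $p$-th powers and uses the eigenspace decomposition.

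What your version makes explicit is the module-theoretic picture: $H$ acts on the $p$-part of $\Gamma''$ with no nonzero fixed vectors. The paper's commutator identity is shorter and gives that as a byproduct.
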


\begin{proof}

Since $\Gamma''$ projects trivially to $G$, it is contained in $\Gamma_0$ and $\Gamma_0/\Gamma''$ is isomorphic to $H$.  But $H$ is abelian, so $\Gamma_0'$ is contained in $\Gamma''$.  For the reverse inclusion, note that $\Gamma_0$ is normal in $\Gamma$ and 
$$
\Gamma/\Gamma_0 \simeq (\Gamma/\Gamma'')/(\Gamma_0/\Gamma'') \simeq G/H \simeq \Z/2\Z.
$$
Since this quotient is abelian, $\Gamma'$ is contained in $\Gamma_0$.  Hence $\Gamma''$ is contained in and therefore equal to $\Gamma_0'$.  Thus $\Gamma_0'$ is abelian and $p$ divides $\vv{\Gamma_0'}$.  In addition 
$$
\Gamma_0^{ab} = \Gamma_0/\Gamma_0' = \Gamma_0/\Gamma'' \simeq H
$$
is an elementary 2-group.  By Proposition \ref{OddDihedral}, $\Gamma_0$ has a $D_p$-quotient.
\end{proof}

\begin{prop} \label{Wreath}
Let $G$ be a non-abelian $2$-group.  The following are equivalent:
\begin{enumerate}[{\rm i)}]
\item $G$ contains a subgroup $H$ of index $2$ and exponent $2$.   \vspace{2 pt}
\item $G \simeq H_1 \times W$, where $H_1 \simeq (\Z/2\Z)^e$ for some $e \ge 0$ and $W$ is the wreath product $(\Z/2\Z)^s \wr \Z/2\Z$ with $s \ge 1$.
\end{enumerate}
Under these conditions, $G$ has a dihedral quotient $D_4\simeq(\Z/2\Z) \wr \Z/2\Z$.
\end{prop}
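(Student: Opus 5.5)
The plan is to treat $H$ as a module over $\F_2[C_2]$ and use the structure theory of such modules. For (ii)$\Rightarrow$(i), take $H = H_1 \times B$, where $B \simeq (\Z/2\Z)^{2s}$ is the base group of the wreath product $W$. It has index $2$ and exponent $2$, and $G$ is non-abelian because $s \ge 1$.

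For (i)$\Rightarrow$(ii), the steps are as follows.
\begin{enumerate}[{\rm 1)}]
\item Fix $t \in G \setminus H$. Since $H$ is abelian, conjugation by $t$ makes $H$ an $\F_2[\lr{\sigma}]$-module with $\sigma^2 = 1$, and $c := t^2$ lies in the fixed module $H^\sigma$.
\item Every finitely generated $\F_2[C_2]$-module is a direct sum of free modules $\F_2[C_2]$ and trivial modules $\F_2$, so $H \simeq \F_2[C_2]^s \oplus \F_2^e$. Since $G = H\lr{t}$ is non-abelian, $\sigma$ acts nontrivially, hence $s \ge 1$.
\item Next, try to split the extension. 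For $h \in H$ we have $(th)^2 = c \cdot h^\sigma h = c\, N(h)$, where $N = 1 + \sigma$ is the norm. The fixed module decomposes as $H^\sigma = N(\F_2[C_2]^s) \oplus \F_2^e$, because $N$ maps the free part onto its fixed points and kills the trivial part. So $c = N(x)\, y$ with $y$ in the trivial part.
\item If $y = 1$, then $t' = t x^{-1}$ is an involution outside $H$. Hence $G = H \rtimes \lr{t'}$, and $G \simeq (\F_2[C_2]^s \rtimes C_2) \times \F_2^e \simeq W \times H_1$, since the free part with the swap action is exactly the wreath product $(\Z/2\Z)^s \wr \Z/2\Z$.
\end{enumerate}

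The main obstacle is showing $y = 1$. In step 3, changing $t$ by elements of $H$ only changes $c$ modulo $N(H)$, so the class of $y$ in $H^\sigma/N(H)$ is an invariant of the extension. My first attempt would be to change $H$ to another index-$2$ elementary abelian subgroup, or to change the decomposition of $H$, so as to absorb $y$.

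However, testing the smallest case suggests this cannot always succeed, and I would check it carefully before writing the argument. Take $H = \lr{a,b,y}$ with $t$ swapping $a$ and $b$, fixing $y$, and $t^2 = y$. Then every element outside $H$ has square $y$ or $yab$. So this group of order $16$ has only $7$ involutions, whereas $D_4 \times \Z/2\Z$ has $11$, and it does not appear to have the form in (ii). If this check holds, the equivalence needs an additional hypothesis, for example that $G$ splits over $H$, or it must allow a central factor $\Z/4\Z$ amalgamated with the trivial part. In the paper's application I would verify the splitting directly, for instance via a Frobenius element of order $2$.

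The final claim about $D_4$ holds in all cases. Let $N_0$ be the product of the trivial part $\F_2^e$ (which contains $y$) and all but one of the free summands. Then $N_0$ is normal in $G$. The quotient $G/N_0$ is generated by the image of one free summand $\F_2[C_2] \simeq (\Z/2\Z)^2$ together with $t$. The image of $t$ has order $2$ because $t^2 = c$ maps to $N(\bar{x})$, and replacing $t$ by $t\bar{x}^{-1}$ gives an involution in the quotient. Thus $G/N_0 \simeq (\Z/2\Z)^2 \rtimes \Z/2\Z$ with the swap action, which is $(\Z/2\Z) \wr \Z/2\Z \simeq D_4$.
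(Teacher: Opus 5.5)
Your counterexample is correct, so the hedge is unnecessary: the implication (i)$\Rightarrow$(ii) is false as stated. In your group every element $th\notin H$ has square $y\cdot h\sigma(h)\in\{y,yab\}$, so $G$ has exactly $7$ involutions. On the other hand, (ii) with $\vv{G}=16$ and $s\ge 1$ forces $e=s=1$, i.e.\ $G\simeq\Z/2\Z\times D_4$, which has $11$ involutions. This is not an accident of small order. For every $s\ge1$ and $k\ge0$, the group $(\Z/2\Z)^{k}\times((\Z/2\Z)^{2s}\rtimes\Z/4\Z)$, with $\Z/4\Z$ acting through $\Z/2\Z$ by swapping two copies of $(\Z/2\Z)^s$, satisfies (i) and has exactly $2^{2s+k+1}$ elements of square $1$. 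By contrast, $(\Z/2\Z)^{e}\times((\Z/2\Z)^{s'}\wr\Z/2\Z)$ has $2^{e+s'}(2^{s'}+1)$ such elements, which is never a power of $2$ for $s'\ge1$.

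The paper's proof uses the same Jordan-block decomposition as your steps 1--2, and it breaks exactly at the obstruction you found. It picks $b$ of order $4$ and writes $H=H_1\times A\times\widetilde{A}$. It then asserts that $W=\lr{A,b}$ is the wreath product with $\vv{W}=2^{2s+1}$, and deduces $H_1\cap W=1$ from the order count. But $W$ also contains $b^2\in H^{\sigma}=N(A\widetilde{A})\times H_1$. So $\vv{W}=2^{2s+1}$ holds only if $b^2\in A\widetilde{A}$, i.e.\ only if the class of $b^2$ in $H^\sigma/N(H)$ vanishes, which is equivalent to $G$ splitting over $H$. In your example $W=\lr{a,t}=G$ and $H_1\cap W=H_1$. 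Nothing repairs the argument: that class does not depend on the choice of $b$ or of the decomposition, and $H$ is the only elementary abelian subgroup of index $2$ (it contains all $7$ involutions).

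Your steps 1--4 give the correct classification. Either $G$ splits and $G\simeq(\Z/2\Z)^e\times W$, or one may take $t^2$ to be a nonzero vector of the trivial part $T\simeq(\Z/2\Z)^e$. In the second case $G\simeq(\Z/2\Z)^{e-1}\times((\Z/2\Z)^{2s}\rtimes\lr{t})$ with $\lr{t}\simeq\Z/4\Z$. This is a semidirect product, not an amalgamation with a central $\Z/4\Z$ as you suggested, since $t$ is not central.

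Your $D_4$ argument is correct in both cases, with one slip. The image of $t$ in $G/N_0$ has square $N(\bar{x})$, which need not be trivial, so it is $t\bar{x}^{-1}$ rather than $t$ that is the involution, as your next clause says. The paper's quotient by $H_1\times B\times bBb^{-1}$ also works directly from (i): that subgroup is $b$-stable in $H$, and the quotient is a non-abelian group of order $8$ containing a Klein four-group.

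The main line of the paper uses only condition (i), through Proposition \ref{My9Lem} and hence Proposition \ref{DpTest2}. So the error affects only the wreath-product description of $\Gal(L_2/K)$ claimed in the introduction. There a genuine splitting argument would be needed, and your suggested Frobenius lift of order $2$ is not automatic. For instance, over $\Q_2$ the cyclic quartic extension with norm group $\lr{-4}(1+4\Z_2)$ has $m\le 1$ and inertia of order $2$ and index $2$, but no Frobenius lift of order $2$.
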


\begin{proof}
Assume (i).  Since $G/H \simeq \Z/2\Z$ is abelian, $G'$ is contained in $H$ and so $G'$ is elementary abelian.  Let $s \ge 1$ be the rank $G'$.  Because $G$ is not abelian, it contains an element $b$ of order 4.  Then $b^2$ is in $H$ and $G = \lr{H,b}$.  The quotient $G/H$ acts on $H$ via conjugation by $b$.   As an $\F_2$-vector space for this action, $H$ decomposes into Jordon blocks (written additively):
$$
H = \left(\oplus_{j=1}^{d} \, Y_j \right) \oplus \left(\oplus_{i=1}^{e} \, X_i \right)
$$
where $\dim Y_j = 2$, $\dim X_i = 1$ and so $\dim H = 2d+e$.  

For $1 \le j \le d$, we can find $a_j$ such that $Y_j = \F_2[G/H] \, a_j$ and so $Y_j = \lr{a_j,a_{d+j}}$ with $a_{d+j} = ba_jb^{-1}$.  Let $A = \lr{a_1, \dots, a_d}$, $\widetilde{A} = \lr{a_{d+1}, \dots, a_{2d}}$ and $H_1 = \oplus_{i=1}^{e} \, X_i$.  Then $H$ is an internal direct product
$$
H = H_1 \times A \times \widetilde{A} \, \text{ and } \, G = \lr{H_1,A,b}.
$$
It follows that an $\F_2$-basis for $G'$ is given by the commutators $[a_j,b] = a_j a_{d+j}$   Hence $d = s = \rk G'$ and $\rk H_1 = e$.  

The action of $G/H$ interchanges $A$ and $\widetilde{A}$.  If $W = \lr{A,b}$, then $W$ is a normal subgroup of $G$ isomorphic to $(\Z/2\Z)^s\!\wr\! (\Z/2\Z)$ and $\vv{W} = 2\vv{A}^2 = 2^{2s+1}$.   Also, $G = \lr{H_1,A,b} = H_1 W$.  This is an internal direct product because 
$$
\begin{array}{rcl}
2^{2s+e+1} &=& 2 \vv{H} =  \vv{G} = \vv{H_1W} = \vv{H_1} \vv{W}/\vv{H_1 \cap W} \\[4 pt]
&=& 2^{2s+e+1}/\vv{H_1 \cap W}
\end{array}
$$
and so $H_1 \cap W = \{1\}$.  It follows that (i) implies (ii).

Assume (ii).  By construction of the wreath product, there are subgroups $A$ and $\widetilde{A}$ of $W$ isomorphic to $(\Z/2\Z)^s$ and interchanged by the action of $\Z/2\Z$.  Then $A \times \widetilde{A}$ is an elementary $2$-group of index 2 in $W$ and so $H = H_1 \times A \times \widetilde{A}$ satisfies (i).  

If $B = \lr{a_2, \dots, a_d}$, then $G/(H_1 \times B \times bBb^{-1})$ is isomorphic to $D_4$.
\end{proof}

The next result of Schoof is useful for limiting odd primes in $\fdeg{\cF}{K}$.

\begin{lem}\cite[Corollary 3.2]{Sch3}.    \label{9lem}
Let $\Gamma$ be a finite group such that $\Gamma'/\Gamma''$ is a $2$-group.  If $\vv{\Gamma''} < 25$, then either $\Gamma''$ is a $2$-group or $9$ divides $\vv{\Gamma''}$.
\end{lem}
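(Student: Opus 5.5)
The strategy is to reduce to a single abelian layer of the derived series, show that its odd part is cyclic, and then get a contradiction from a central Hall subgroup via Schur--Zassenhaus. Suppose an odd prime $p$ divides $\vv{\Gamma''}$ and $9 \nmid \vv{\Gamma''}$; I aim for a contradiction. Since $\vv{\Gamma''} < 25 < 60$, the group $N = \Gamma''$ is solvable, so its derived series $N = N^{(0)} \supseteq N^{(1)} \supseteq \cdots$ reaches $\{1\}$. As $p$ divides $\vv{N}$, some successive quotient $N^{(k)}/N^{(k+1)}$ has non-trivial odd part. Choose $k \ge 0$ minimal with this property. Then every quotient $N^{(i)}/N^{(i+1)}$ with $i<k$ is a $2$-group.

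Next I replace $\Gamma$ by $\Delta = \Gamma^{(k)}$, so that $\Delta'' = N^{(k)}$. The hypotheses carry over. The quotient $\Delta'/\Delta''$ is $\Gamma'/\Gamma''$ when $k=0$, and equals $N^{(k-1)}/N^{(k)}$ when $k \ge 1$; in both cases it is a $2$-group. Moreover $\Delta''/\Delta'''$ is abelian with non-trivial odd part $A$. Since $\vv{A}$ divides $\vv{N} < 25$ and $9 \nmid \vv{N}$, the order $\vv{A}$ is odd and squarefree. Indeed, the only odd prime powers $p^2 < 25$ are $9$, which is excluded by hypothesis. Hence $A$ is cyclic and $\operatorname{Aut}(A)$ is abelian.

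Now set $Y = \Delta'/\Delta'''$. Then $Y' = \Delta''/\Delta'''$ is abelian and $Y/Y'$ is a $2$-group. The odd part $A$ of $Y'$ is characteristic, hence normal in $\Delta/\Delta'''$. The conjugation action of $\Delta$ on $A$ factors through the abelian group $\operatorname{Aut}(A)$, so $\Delta' = [\Delta,\Delta]$ acts trivially on $A$. In other words, $A$ is central in $Y$.

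This is the decisive step. Since $Y/A$ is a $2$-group and $A$ has odd order, $A$ is a central normal Hall subgroup of $Y$. By Schur--Zassenhaus it has a complement $S$ (a Sylow $2$-subgroup), and centrality gives $Y = A \times S$. Consequently $Y' = S'$ is a $2$-group, which contradicts $A \subseteq Y'$ with $A \ne 1$. Therefore no odd prime divides $\vv{\Gamma''}$ unless $9 \mid \vv{\Gamma''}$.

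The step I expect to require the most care is the choice of the layer $k$. The first abelianization $N/N'$ may have no odd part; for example $N \simeq S_4$ gives $N/N' \simeq \Z/2\Z$. So one must descend the derived series and check that the $2$-group hypothesis on $\Delta'/\Delta''$ is inherited at each stage, which is exactly what the minimality of $k$ guarantees.
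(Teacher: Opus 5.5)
Your proof is correct and complete. The paper gives no proof of this lemma; it imports it from Schoof \cite[Corollary 3.2]{Sch3}. So the only comparison is that you give a self-contained argument where the paper relies on a citation, and every step holds up:
\begin{enumerate}[\, $\bullet$]
\item $|\Gamma''|<25$ gives solvability.
\item Minimality of $k$ makes $\Delta'/\Delta''$ a $2$-group; for $k=0$ this is the hypothesis.
\item The odd part $A$ of $\Delta''/\Delta'''$ has squarefree order, so it is cyclic with abelian automorphism group.
\item Hence $\Delta'$ centralizes $A$, so $A$ is a nontrivial central subgroup of odd order and $2$-power index in $Y=\Delta'/\Delta'''$. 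Such a subgroup splits off as a direct factor and cannot lie in $Y'$.
\end{enumerate}
Schur--Zassenhaus is more than you need at the last step. $Y$ is a central extension of a $2$-group, hence nilpotent, hence $Y=A\times S$ with $S$ its Sylow $2$-subgroup.

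Compared with the bare citation, your version shows where the exceptional $9$ comes from. Your argument proves more than the lemma: whenever $\Gamma'/\Gamma''$ is a $2$-group and $\Gamma''$ is solvable, the odd part of the first layer of the derived series of $\Gamma''$ with nontrivial odd part cannot be cyclic. Hence $q^2$ divides $|\Gamma''|$ for some odd prime $q$. The bound $25$ is used only to get solvability and to force $q=3$. The exception does occur: for $\Gamma=\F_3^2\rtimes Q_8$ with $Q_8\subset \mathrm{SL}_2(\F_3)$ acting naturally, one has $\Gamma'/\Gamma''\simeq\Z/2\Z$ and $\Gamma''=\F_3^2$.
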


\numberwithin{equation}{section}

\section{The relevant group schemes} \label{GrpSchemes}

We quote some general tools from Schoof's work.  Then we use this information to address the extension problem for group schemes of order 2 that might occur in the $2$-power division fields of the abelian varieties we consider.  By {\em group scheme} we always mean {\em finite flat group scheme}.  

\begin{lem} \cite[Prop. 2.3]{Sch3}. \label{Artin1} 
Let $R$ be a commutative Noetherian ring, $p$ an element of $R$ and $\widehat{R}=\underleftarrow{\lim}\, R/p^nR$.   There is an equivalence of categories $\underline{Gr}_R \to \underline{C}$  given by the functor that sends an $R$-group scheme $G$ to the triple 
$$
(G\otimes_R\widehat{R},\, G\otimes_R R[\textstyle{\frac{1}{p}}], \, \text{id}\otimes_R \widehat{R}[\textstyle{\frac{1}{p}}]).
$$ 
\end{lem}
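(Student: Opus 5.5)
This lemma is cited from Schoof \cite[Prop.~2.3]{Sch3}, and it is a version of the Artin/Beauville--Laszlo gluing theorem. I would prove it by descent for finite flat affine schemes along the fpqc-type cover $R \to \widehat{R} \times R[\frac1p]$. Here $\underline{C}$ is the category of triples $(G_1,G_2,\theta)$, where $G_1$ is a finite flat group scheme over $\widehat{R}$, $G_2$ is one over $R[\frac1p]$, and $\theta$ is an isomorphism between their base changes to $\widehat{R}[\frac1p]$. Since a finite flat group scheme is $\mathrm{Spec}$ of a finitely generated projective $R$-algebra with Hopf structure maps, it suffices to prove the corresponding equivalence for finitely generated projective modules. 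The statement then upgrades to algebras and Hopf algebras, because tensor products of modules are compatible with the gluing and so the structure maps (multiplication, comultiplication, counit, antipode) glue uniquely.

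The module statement is proved in four steps.

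\begin{enumerate}[1.]
\item \textbf{The gluing square.} Because $R$ is Noetherian, $\widehat{R}$ is flat over $R$ and $R/p^n \simeq \widehat{R}/p^n\widehat{R}$. I would show that
$$
0 \to R \to \widehat{R} \times R[\tfrac1p] \to \widehat{R}[\tfrac1p]
$$
is exact, and that the right-hand map is surjective modulo a manageable cokernel. Injectivity on the $p$-power-torsion part follows because $p^\infty$-torsion in $R$ is bounded (Noetherian) and injects into $\widehat{R}$. Exactness in the middle reduces to $\widehat{R}[\frac1p] = \widehat{R} + R[\frac1p]$ and $\widehat{R} \cap R[\frac1p] = R$ inside the appropriate ring. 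This uses $\widehat{R}/p^n = R/p^n$ and the boundedness of $p$-torsion (Artin--Rees).

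\item \textbf{Full faithfulness.} Given modules $M,N$, tensoring the exact sequence with the flat module $\Hom(M,N)$ (projective, since $M$ is finite projective) shows that $\Hom_R(M,N)$ is the equalizer of the two Hom's over $\widehat{R}$ and $R[\frac1p]$ into the Hom over $\widehat{R}[\frac1p]$.

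\item \textbf{Essential surjectivity.} Given a triple $(M_1,M_2,\theta)$, define $M$ as the kernel of $M_1 \times M_2 \to M_1[\frac1p]$, the map being the difference via $\theta$. I would show that $M \otimes \widehat{R} \to M_1$ and $M \otimes R[\frac1p] \to M_2$ are isomorphisms, and then that $M$ is finitely generated and projective. Finite generation comes from choosing generators of $M_2$ cleared of denominators and approximating a basis of $M_1$ modulo $p^n$ by elements of $M$. Projectivity holds because it is fpqc local: $\widehat{R} \times R[\frac1p]$ is faithfully flat over $R$ where it matters, namely at primes containing $p$, which lie in the image of $\mathrm{Spec}\,\widehat{R}$.

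\item \textbf{Group structure.} Apply steps 2 and 3 to the Hopf-algebra structure maps.
\end{enumerate}

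The main obstacle is essential surjectivity in step 3: showing that the glued $M$ is finitely generated and recovers $M_1$ after completion. The cleanest route is the Beauville--Laszlo argument. First reduce to the case where $M_1$ and $M_2$ are free by localizing. Then use a Cauchy-sequence approximation of the gluing matrix $\theta \in GL_n(\widehat{R}[\frac1p])$ as a product of a matrix in $GL_n(\widehat{R})$ and a matrix in $GL_n(R[\frac1p])$; this factorization is possible because $R[\frac1p]$ is $p$-adically dense enough in $\widehat{R}[\frac1p]$. Alternatively, I would simply invoke Artin's theorem in the form used by Schoof, since the statement is quoted as a known result.
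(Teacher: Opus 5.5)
The paper does not prove this lemma. It quotes Schoof [Sch3, Prop.~2.3], an instance of Artin/Beauville--Laszlo gluing, so your closing fallback of citing Artin's theorem is exactly what the paper does. As a standalone argument, your Steps 1, 2 and 4 are sound in substance, with two small corrections to Step 1. First, $\widehat R\times R[\frac1p]\to\widehat R[\frac1p]$ is actually onto: $\widehat R/p^n\widehat R=R/p^nR$ gives $\widehat R[\frac1p]=\widehat R+R[\frac1p]$. That is right exactness; middle exactness is the separate fibre-product statement, which needs Artin--Rees. Second, $R\to\widehat R\times R[\frac1p]$ is faithfully flat outright, not just "where it matters".

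The gap is Step 3, which is the whole content of the lemma, and the route you propose for it fails. The factorization $\theta\in\mathrm{GL}_n(R[\frac1p])\cdot\mathrm{GL}_n(\widehat R)$ is false in general. These double cosets classify triples whose two pieces are free of rank $n$, and $\theta$ lies in the trivial coset exactly when the glued module is free. For example, take $R=\Z[\sqrt{-5}]$, $p=2$ and $\mathfrak q=(2,1+\sqrt{-5})$. Then $2R=\mathfrak q^2$, and $\widehat R=\Z_2[\sqrt{-5}]$ is a discrete valuation ring with uniformizer $\pi=1+\sqrt{-5}$. For the projective module $\mathfrak q$, both $\mathfrak q\widehat R=\pi\widehat R$ and $\mathfrak q[\frac12]=R[\frac12]$ are free of rank one, and the gluing scalar is $\pi$. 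A factorization $\pi=\alpha\beta$ with $\alpha\in R[\frac12]^\times$ and $\beta\in\widehat R^\times$ would give $\alpha R=\mathfrak q$, but $\mathfrak q$ is not principal. Density only produces $\theta_0\in M_n(R[\frac1p])$ with $\theta^{-1}\theta_0\in\mathrm{GL}_n(\widehat R)$, and nothing makes $\det\theta_0$ a unit of $R[\frac1p]$.

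Your preliminary reduction to free $M_1,M_2$ ``by localizing'' is also unavailable. Localizing $R$ does not commute with $p$-adic completion. Moreover, you cannot know a priori that $M_2$ becomes free on some $D(gp)$ with $g$ outside a given prime containing $p$: that follows from $M_2$ extending across $V(p)$, i.e.\ from the lemma itself.

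What does work is to stay with the fibre product $M$ and never trivialize. Put $M_{12}=M_1[\frac1p]$.
\begin{enumerate}[(a)]
\item Since $\widehat R[\frac1p]=p^c\widehat R+R[\frac1p]$ for every $c$ and $M_2$ is finitely generated, the map $M_1\times M_2\to M_{12}$ is onto.
\item Tensor $0\to M\to M_1\times M_2\to M_{12}\to0$ with $R/p^nR$. Because $p$ is invertible on $M_2$ and $M_{12}$, this gives $M/p^nM\cong M_1/p^nM_1$. Also $M[\frac1p]\cong M_2$ by exactness of localization.
\item Let $M'\subseteq M$ be spanned by finitely many elements generating $M/pM$ and $M[\frac1p]$. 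Nakayama over $\widehat R$ makes $M'\otimes_R\widehat R\to M_1$ onto.
\item For every $R$-module $N$ one has $N=(N\otimes_R\widehat R)\times_{N\otimes_R\widehat R[1/p]}N[\frac1p]$, because the last term of your Step 1 sequence is flat. Together with (c), this forces $M'=M$.
\end{enumerate}
Hence $M$ is finitely generated and $M\otimes_R\widehat R=\varprojlim M/p^nM\cong M_1$. Finite projectivity of $M$ then descends along the faithfully flat map $R\to\widehat R\times R[\frac1p]$.
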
 

\begin{cor} \cite[Cor. 2.4]{Sch3}. 
Let $G$ and $H$ be  $R$-group schemes.  There is a natural exact {\em Mayer-Vietoris}   sequence
\begin{equation} \label{MV}  
\begin{array}{l} 
  \Hom_{\widehat{R}[\frac{1}{p}]}(G,H)  \leftarrow \Hom_{\widehat{R}}(G,H) \times  \Hom_{{R[\frac{1}{p}]}}(G,H) \leftarrow \Hom_R(G,H) \leftarrow  0  \vspace{2 pt} \\
 \hspace{15 pt}     \delta \, \downarrow    \vspace{2 pt} \\
   \Ext^1_{R}(G,H)   \rightarrow  \Ext^1_{\widehat{R}}(G,H) \times\Ext^1_{{R[\frac{1}{p}]}}(G,H) \rightarrow   \Ext^1_{\widehat{R}[\frac{1}{p}]}(G,H).
\end{array}
\end{equation}
The map $\delta$ is described in  the reference.
\end{cor}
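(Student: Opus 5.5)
This is Schoof's Mayer–Vietoris corollary, and I would derive it from the equivalence of categories in Lemma \ref{Artin1}. Write $\widehat{R}$, $R' = R[\frac{1}{p}]$ and $\widehat{R}' = \widehat{R}[\frac{1}{p}]$. The lemma identifies an $R$-group scheme $G$ with a triple $(G_1, G_2, \phi)$. Here $G_1$ is over $\widehat{R}$, $G_2$ is over $R'$, and $\phi\colon G_1\otimes\widehat{R}' \simeq G_2\otimes\widehat{R}'$ is a gluing isomorphism. Morphisms are pairs of morphisms compatible with the gluing data.

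\emph{The $\Hom$ part.} Under the equivalence, $\Hom_R(G,H)$ is the set of pairs $(f_1,f_2)$ whose base changes to $\widehat{R}'$ agree after transport by the gluings. It is therefore the kernel of the difference map
$$
\Hom_{\widehat{R}}(G,H)\times\Hom_{R'}(G,H)\to\Hom_{\widehat{R}'}(G,H), \qquad (f_1,f_2)\mapsto f_1-\phi_H^{-1}f_2\phi_G .
$$
This gives the first row, including injectivity at the left end.

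\emph{Definition of $\delta$.} Given $h\in\Hom_{\widehat{R}'}(G,H)$, take the trivial extensions $H\times G$ over $\widehat{R}$ and over $R'$. Glue them over $\widehat{R}'$ by the automorphism $(y,x)\mapsto(y+h(x),x)$. Because this automorphism is compatible with the inclusion of $H$ and the projection to $G$, Lemma \ref{Artin1} produces an $R$-group scheme $E_h$ in an extension $0\to H\to E_h\to G\to 0$. We set $\delta(h)=[E_h]$.

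\emph{Exactness.} The checks are the following.
\begin{enumerate}[{\rm (a)}]
\item $E_h$ is split if and only if $h$ is a difference $f_1-f_2$. A splitting over $R$ corresponds, via the equivalence, to a pair of local sections, which must differ over $\widehat{R}'$ by $h$.
\item The composite $\Hom_{\widehat{R}'}\to\Ext^1_R\to\Ext^1_{\widehat{R}}\times\Ext^1_{R'}$ is zero, since $E_h$ is trivial on each piece by construction.
\item Conversely, suppose an extension $E$ over $R$ becomes split over $\widehat{R}$ and over $R'$. Choosing the two splittings rewrites the gluing of $E$ as $(y,x)\mapsto(y+h(x),x)$ for some $h$, so $E\simeq E_h$.
\item At the final term, a pair of local extensions agreeing over $\widehat{R}'$ glues, again by Lemma \ref{Artin1}, to an extension over $R$. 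This gives exactness at $\Ext^1_{\widehat{R}}\times\Ext^1_{R'}$.
\end{enumerate}

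The main obstacle is the last step. The equivalence must preserve exactness of sequences of finite flat group schemes, so that glued triples which are extensions locally give an extension over $R$. Flatness and finiteness are local for the covering by $\widehat{R}$ and $R'$, and that should make this step go through. Naturality of the whole sequence, and the fact that $\delta$ is a homomorphism (via Baer sums of gluing data), are then formal.
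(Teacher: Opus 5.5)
Your argument is correct and is essentially the proof in Schoof's paper, which this paper only cites: the whole sequence is a formal consequence of the gluing equivalence of Lemma \ref{Artin1}, with $\delta(h)$ the extension obtained by gluing two copies of $H\times G$ along the unipotent automorphism $(y,x)\mapsto(y+h(x),x)$ over $\widehat{R}[\frac{1}{p}]$. The exactness point you flag is settled because $R\to\widehat{R}\times R[\frac{1}{p}]$ is faithfully flat (completion is flat since $R$ is Noetherian, and every prime of $R$ either contains $p$ or survives in $R[\frac{1}{p}]$), so being a closed immersion, being faithfully flat, and identifying $H$ with the kernel all descend, which makes the glued sequences genuine extensions over $R$.
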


\begin{lem} \cite[Prop. 4.2]{Sch7a}. \label{Kum} 
If $F$ is a number field with ring of integers $R$ and strict class number $h_F^+ = 1$, then there is an exact sequence of Kummer theory
$$
1\to\{\pm 1\} \to \Ext^1_{R}(\Z/2\Z,\Mu_2)\to R^\times\!/R^{\times 2}\to 1.
$$
The natural map $ \Ext^1_{R}(\Z/2\Z,\Mu_2)\to \Ext^1_{F}(\Z/2\Z,\Mu_2)$ is injective and  any extension $0\to \Mu_2 \to G\to \Z/2\Z\to 0$ is determined by the Galois module $G(\ov{F})$. 
\end{lem}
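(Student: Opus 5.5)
We work in the category of finite flat group schemes over $R$, where $h_F^+=1$. The proof compares three groups: the fppf/Kummer description of $\Ext^1_R(\Z/2\Z,\Mu_2)$, the Galois description of $\Ext^1_F(\Z/2\Z,\Mu_2)$, and the Mayer–Vietoris sequence \eqref{MV} with $p=2$.

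\emph{The exact sequence.} Work in fppf cohomology on $\operatorname{Spec} R$. Since $\Z/2\Z$ is constant, $\Ext^1_R(\Z/2\Z,\Mu_2)$ is computed by $H^1_{\rm fppf}(R,\Mu_2)$ together with a correction from $\Hom(\Z/2\Z,\Mu_2)$. More precisely, the local-to-global Ext spectral sequence (equivalently, applying $\Hom(-,\Mu_2)$ to $0\to 2\Z\to\Z\to\Z/2\Z\to0$) gives
$$
0\to \Mu_2(R)/(\text{image}) \to \Ext^1_R(\Z/2\Z,\Mu_2)\to H^1_{\rm fppf}(R,\Mu_2)[2]\to \cdots.
$$
The Kummer sequence $1\to\Mu_2\to\mathbb{G}_m\xrightarrow{2}\mathbb{G}_m\to1$ gives
$$
0\to R^\times/R^{\times2}\to H^1_{\rm fppf}(R,\Mu_2)\to \operatorname{Pic}(R)[2]\to 0,
$$
and $\operatorname{Pic}(R)=1$ because $h_F\mid h_F^+=1$. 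The $\{\pm1\}$ on the left corresponds to $\Mu_2(R)=\Hom_R(\Z/2\Z,\Mu_2)$: under Baer sum, the extensions of the form $\Mu_2\oplus\Z/2\Z$ twisted by a homomorphism give the kernel of $\Ext^1_R(\Z/2\Z,\Mu_2)\to\Ext^1_R(\Z,\Mu_2)=H^1(R,\Mu_2)$. It then remains to show surjectivity onto $R^\times/R^{\times2}$. For this we realize each unit $u$ by the explicit scheme $G_u=\coprod_{i=0,1}\{x^2=u^i\}$, with the obvious group law. This scheme is finite flat over $R$ because $u$ is a unit.

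\emph{Injectivity into $\Ext^1_F$.} Next we use \eqref{MV}, or a direct argument. The kernel of $\Ext^1_R\to\Ext^1_F$ consists of extensions that split generically. Two things must be shown. First, the Kummer class must be the same over $F$; this holds because $R^\times/R^{\times2}\to F^\times/F^{\times2}$ is injective, since a unit that is a square in $F$ has its square root in $R$ by integral closure. Second, the $\{\pm1\}$ part must survive. This is where $h^+_F=1$ enters a second time. The two extensions with the same unit class differ by the $\pm$ twist, and their generic fibres differ as Galois modules: the twisted one has $F(\sqrt{-u})$ versus $F(\sqrt u)$, or differs in the structure of the $2$-torsion. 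We must check that $-1$ is not a square times a unit in a way that collapses the classes. Alternatively, we can count. Over $F$, $\Ext^1_F(\Z/2\Z,\Mu_2)$ contains $F^\times/F^{\times2}\times\{\pm1\}$-type data. Prolongations of a given generic extension to $R$ are controlled at the primes above $2$ by Fontaine/Raynaud uniqueness of finite flat models, which holds because $e=1<p-1$ fails for $p=2$. So at $2$ we instead use Lemma \ref{Artin1}: a generic extension has at most one model, since $\Hom_{\widehat R}\to\Hom_{\widehat R[1/2]}$ is surjective for these rank-one pieces. That surjectivity makes the connecting map $\delta$ in \eqref{MV} vanish, and exactness then gives the injectivity.

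\emph{The last claim.} Since $\Ext^1_R\hookrightarrow\Ext^1_F$, and extensions of $\Z/2\Z$ by $\Mu_2$ over a field of characteristic $0$ are the same as extensions of Galois modules, $G$ is determined by $G(\ov F)$.

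\emph{Main obstacle.} I expect the hardest step to be the injectivity at the primes above $2$. The Mayer–Vietoris term $\Hom_{\widehat R[1/2]}(\Z/2\Z,\Mu_2)$ modulo the images must be shown not to produce nontrivial kernel classes. I would handle this by checking directly that the local Hom groups all equal $\Mu_2=\{\pm1\}$, so that the first row of \eqref{MV} is surjective.
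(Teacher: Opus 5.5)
Your derivation of the exact sequence is fine. Applying $\Hom(-,\Mu_2)$ to $0\to\Z\xrightarrow{2}\Z\to\Z/2\Z\to0$ gives $0\to\Mu_2(R)\to\Ext^1_R(\Z/2\Z,\Mu_2)\to H^1_{\mathrm{fppf}}(R,\Mu_2)\to0$. Multiplication by $2$ kills both outer terms, so surjectivity is automatic and the explicit torsors are optional. Kummer theory with $\operatorname{Pic}(R)=0$ then identifies the right-hand term with $R^\times/R^{\times2}$.

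\textbf{The gap is in the injectivity into $\Ext^1_F$.} You correctly split it into two points, and your first point (injectivity of $R^\times/R^{\times2}\to F^\times/F^{\times2}$) is right. Your justification of the second point, that the $\{\pm1\}$-part survives, is wrong. You conflate $-1\in\Mu_2(R)$, which is the kernel term, with the class of the unit $-1$ in the quotient $R^\times/R^{\times2}$. Twisting by the nontrivial element of $\Mu_2(R)$ does not turn $F(\sqrt u)$ into $F(\sqrt{-u})$: the field of points stays $F(\sqrt u)$, and what changes is the exponent of $G(\ov F)$, from $2$ to $4$. The hypothesis $h_F^+=1$ plays no role at this point.

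The Mayer--Vietoris fallback does not close the gap either. Surjectivity of $\Hom_{\widehat R}\times\Hom_{R[\frac12]}\to\Hom_{\widehat R[\frac12]}$ only kills $\delta$, so it gives injectivity of $\Ext^1_R\to\Ext^1_{\widehat R}\times\Ext^1_{R[\frac12]}$. To reach $\Ext^1_F$ you would still need injectivity of $\Ext^1_{R[\frac12]}\to\Ext^1_F$ and of $\Ext^1_{\widehat R}\to\Ext^1_{\widehat R[\frac12]}$. Your ``at most one model'' claim is this second statement, and it does not follow from anything about Hom groups. Raynaud's uniqueness theorem needs $e<p-1$, which never holds for $p=2$.

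\textbf{The fix is functoriality of the long exact sequence under $R\to F$.} Over $F$ the same argument, with Hilbert 90, gives $0\to\Mu_2(F)\to\Ext^1_F(\Z/2\Z,\Mu_2)\to F^\times/F^{\times2}\to0$. Restriction gives a commutative ladder with the sequence over $R$. Its left map $\Mu_2(R)\to\Mu_2(F)$ is an isomorphism, and its right map $R^\times/R^{\times2}\to F^\times/F^{\times2}$ is injective because $R$ is integrally closed. A diagram chase then gives injectivity of the middle map, with no local analysis at the primes over $2$.

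Concretely, the nontrivial element of $\{\pm1\}$ is the pushout of $0\to\Z\xrightarrow{2}\Z\to\Z/2\Z\to0$ along $\Z\to\Mu_2$, $1\mapsto-1$. Its generic fibre is the constant group $\Z/4\Z$, which is visibly non-split.

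With injectivity in hand, your last paragraph goes through. In fact the class is even read off from the abstract Galois module $G(\ov F)$: its field of points $F(\sqrt u)$ gives the unit class, and its exponent gives the sign.
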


\begin{lem} \cite[Lemma 2.8]{CG}.
Let $R$ be a Dedekind domain with field of fractions $F$ of characteristic
$0$, and let $G$ be a finite flat group scheme over $R$. Then there is a bijective correspondence associating each closed finite flat subgroup scheme $H \subseteq G$ over $R$ to the $\Gal(\ov{F}/F)$-submodule $H(\ov{F}) \subseteq G(\ov{F})$.
\end{lem}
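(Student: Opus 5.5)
The plan is to reduce to the generic fibre and then use schematic closure. Over $F$, which has characteristic $0$, every finite group scheme is étale. Finite étale $F$-group schemes are equivalent to finite $\Gal(\ov{F}/F)$-modules, and closed subgroup schemes of $G_F = G\otimes_R F$ correspond exactly to Galois-stable subgroups of $G(\ov{F})$. So it suffices to show that $H \mapsto H_F$ is a bijection from closed finite flat $R$-subgroup schemes of $G$ to closed $F$-subgroup schemes of $G_F$. The inverse will be schematic closure in $G$.

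\emph{Injectivity.} Write $G = \mathrm{Spec}\, A$, with $A$ finite projective over $R$, and let $H = \mathrm{Spec}\, A/I$ be closed and flat. Since $R$ is a Dedekind domain, flatness of $A/I$ means it is torsion free. Hence $A/I \hookrightarrow (A/I)\otimes_R F$, so
$$
I = \ker\bigl(A \to (A/I)\otimes_R F\bigr) = A \cap (I\otimes_R F).
$$
Thus $H$ is the schematic closure of $H_F$ in $G$, and $H$ is determined by $H_F$, hence by $H(\ov{F})$.

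\emph{Surjectivity.} Start from a Galois submodule $M$ and its subgroup scheme $H_F = \mathrm{Spec}\,(A\otimes F)/J \subseteq G_F$. Set $I = A \cap J$ and $H = \mathrm{Spec}\, A/I$. Then $A/I$ embeds in $(A\otimes F)/J$, so it is torsion free and finitely generated over $R$, hence finite flat. Also $H\otimes_R F = H_F$, because localization is exact.

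The main obstacle is checking that $H$ is a subgroup scheme, i.e.\ that comultiplication, counit and antipode of $A$ carry $I$ into the corresponding ideals. For the counit and antipode this is straightforward: these maps are $R$-linear and preserve $J$ after tensoring with $F$, so they preserve $I = A\cap J$. For multiplication, I would show that $H\times_R H$ is the schematic closure of $H_F\times_F H_F$ in $G\times_R G$. This holds because $(A/I)\otimes_R(A/I)$ is a tensor product of finite projective $R$-modules, hence flat and torsion free, and its generic fibre is $H_F\times H_F$. The injectivity argument applied to $G\times_R G$ then identifies it with the closure. The multiplication morphism $m\colon G\times G\to G$ maps $H_F\times H_F$ into $H_F$, so it maps the closure $H\times H$ into the closure $H$ of $H_F$, since closed immersions pull back and the preimage $m^{-1}(H)$ is closed and contains $H_F\times H_F$. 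Combining this with the étale-generic-fibre dictionary gives the bijection $H \leftrightarrow H(\ov{F})$.
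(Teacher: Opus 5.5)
The paper gives no proof of its own here and only cites \cite[Lemma 2.8]{CG}; your argument (pass to the \'etale generic fibre, then invert $H\mapsto H_F$ by schematic closure) is the standard proof of that lemma and is correct. You also handle the one nontrivial point properly. Since $(A/I)\otimes_R(A/I)$ is torsion free with generic fibre $H_F\times_F H_F$, it is the schematic closure of $H_F\times_F H_F$ in $G\times_R G$, so it lies in the closed subscheme $m^{-1}(H)$; this is exactly what forces $\Delta(A\cap J)$ into the ideal of $H\times_R H$.
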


\begin{Not} \label{Grho}
We continue to assume that $\cO$ is the ring of integers in a number field $K$ satisfying Hypothesis {\bf K}.  Let $\rho$ be a divisor of 2 in $\cO$ and write $G_\rho$ for the group scheme of order 2 over $\cO$ with Hopf algebra
$
\cO[X]/(X^2-\rho X),
$
as in \cite{TO}.
\end{Not}

Recall that $G_{\rho_1}$ and $G_{\rho_2}$ are isomorphic if and only if $\rho_1=\rho_2 u$ for some unit $u$ in $\cO$.  If so, we write $\rho_1 \sim \rho_2$.  The Cartier dual group scheme $G_\rho^\wedge$ is given by $G_{\bar{\rho}}$, where $\rho \bar{\rho}=2$.  The only \'{e}tale group scheme of order 2 over $\cO$ is $\Z/2\Z$, equal to $G_u$ for any unit $u$.  Over the completion $\cO_\pi$ at a prime $\pi \vert 2$, we have
\begin{equation} \label{SimpleLocal}
G_\rho \simeq \begin{cases} \Mu_2 &\text{if } \pi \mid \rho, \\ \Z/2\Z  &\text{if } \pi \nmid \rho. \end{cases} 
\end{equation}

\vspace{5 pt}

The next result generalizes Proposition 4.3(ii) of  \cite{Sch7a}.  

\begin{prop}  \label{ExtDim}
Let $\rho$ and $\rho'$ be divisors of $2$ in $\cO$.  For primes $\pi$, $\pi'$ of $\cO$, let 
$$
S = \{\pi \hspace{2 pt} : \hspace{2 pt} \pi \vert \rho, \hspace{2 pt} \pi \! \nmid \! \rho'\}, \quad S' = \{\pi' \hspace{2 pt} : \hspace{2 pt} \pi' \vert \rho', \hspace{2 pt} \pi' \! \nmid \! \rho\}, \quad P' = \textstyle{\prod_{\pi' \in S'}} \, \pi'.
$$
Assume that $s = \vv{S}  \ge 1$ and let $M$ be the maximal elementary $2$-extension of $K$ of conductor $\infty (P')^2$, split over all $\pi$ in $S$.  If $G$ is an extension of the form 
\begin{equation} \label{Ext21}
0 \to G_{\rho'} \to G \to G_\rho \to 0,
\end{equation}
then $2G = 0$, the field of points $K(G)$ is is quadratic over $K$ and is contained in $M$.  Also, $\vv{\Ext_{\cO}^1(G_\rho,G_{\rho'})}  = 2^{s-1} \, \fdeg{M}{K}$.
\end{prop}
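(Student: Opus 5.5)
The plan is to compute $\Ext^1_{\cO}(G_\rho,G_{\rho'})$ with the Mayer--Vietoris sequence \eqref{MV}, taking $R=\cO$ and $p=2$. Then $\widehat{R}=\prod_{\pi\mid 2}\cO_\pi$ and $R[\tfrac12]=\cO[\tfrac12]$. Over $\cO[\tfrac12]$ both $G_\rho$ and $G_{\rho'}$ become $\Z/2\Z$, so extensions there are classified by $H^1(\cO[\tfrac12],\Z/2\Z)$, i.e.\ by quadratic extensions of $K$ unramified outside $2\infty$. At each $\pi\mid2$ we use \eqref{SimpleLocal} and split the primes into four types.
\begin{enumerate}[{\rm (a)}]
\item For $\pi\in S$ the local extension is of $\Mu_2$ by $\Z/2\Z$. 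Over the unramified ring $\cO_\pi$ the connected--\'etale sequence splits it, so the local class is trivial and the field of points splits at $\pi$.
\item For $\pi'\in S'$ the local extension is of $\Z/2\Z$ by $\Mu_2$. By local Kummer theory (as in Lemma \ref{Kum}) it is given by a local unit up to squares, so the conductor exponent is at most $2$; this is consistent with Theorem \ref{FontThm}.
\item For primes dividing both $\rho$ and $\rho'$ the extension is of $\Mu_2$ by $\Mu_2$. By Cartier duality it is unramified.
\item For primes dividing neither the extension is of $\Z/2\Z$ by $\Z/2\Z$, and it is likewise unramified.
\end{enumerate}
Hence the generic fiber of any extension $G$ gives a quadratic (or trivial) extension of $K$ whose conductor divides $\infty(P')^2$ and which is split at every $\pi\in S$. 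So $K(G)\subseteq M$ and $[K(G):K]\le 2$.

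For $2G=0$: multiplication by $2$ on $G$ kills $G_{\rho'}$ and factors through $G_\rho$, so it induces a homomorphism $G_\rho\to G_{\rho'}$. Choose $\pi\in S$, which exists because $s\ge1$. Over $\cO_\pi$ this becomes a map $\Mu_2\to\Z/2\Z$, which is zero since it goes from connected to \'etale. By the correspondence of \cite{CG} between subgroup schemes and Galois submodules (a map over $\cO$ is determined by its generic fiber, and the local statement forces its graph to be trivial), the homomorphism is zero. Thus $2G=0$.

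For the count, I would read off both ends of the Mayer--Vietoris sequence.
\begin{enumerate}[{\rm (1)}]
\item The left-hand term: $\Hom_{\widehat R[1/2]}(G_\rho,G_{\rho'})\simeq(\Z/2\Z)^n$, where $n$ is the number of primes over $2$. The factor $\Hom_{\cO[1/2]}$ maps onto the diagonal. Locally, $\Hom_{\cO_\pi}$ is $\Z/2\Z$ at every $\pi\notin S$; this uses $\Hom(\Z/2\Z,\Mu_2)=\Mu_2(\cO_\pi)=\{\pm1\}$ at $\pi\in S'$. It is $0$ at $\pi\in S$. So the image has index $2^{s-1}$, and $\delta$ contributes a subgroup of order $2^{s-1}$: extensions that are locally and generically split but globally nontrivial.
\item The image of $\Ext^1_{\cO}$ in $\Ext^1_{\widehat R}\times\Ext^1_{\cO[1/2]}$ is the set of compatible pairs. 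Since a local class is determined by its generic fiber (the local $\Ext$'s above inject into those of the generic fibers, as in Lemma \ref{Kum}), compatible pairs correspond exactly to quadratic characters of $K$ satisfying the local conditions (a)--(d). These are the subfields of $M$, giving $[M:K]$ classes.
\end{enumerate}
Multiplying the two contributions gives $2^{s-1}[M:K]$.

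I expect the main obstacle to be step (2): checking that every quadratic subfield of $M$ actually glues to a global extension. This needs the local classification at each $\pi$, namely that the local $\Ext^1$ groups are exactly the split class at $\pi\in S$, the unit Kummer classes at $\pi'\in S'$, and the unramified classes elsewhere, together with injectivity into the generic fiber. Then the condition ``compatible on $\widehat R[\tfrac12]$'' matches ``the field lies in $M$'' with no further constraint. I would model this on Schoof's proof of Proposition 4.3(ii) in \cite{Sch7a}.
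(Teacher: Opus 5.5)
Your proposal is correct and follows the paper's proof essentially step for step. You get $2G=0$ from $\Hom_{\cO_\pi}(\Mu_2,\Z/2\Z)=0$ at some $\pi\in S$, and the local analysis at the primes over $2$ places $K(G)$ in $M$. In the Mayer--Vietoris sequence, the cokernel of the $\Hom$ terms has order $2^{s-1}$, and the image of $f$ is identified with the characters of $\Gal(M/K)$.

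The differences are cosmetic. You bound the conductor at $S'$ by local Kummer theory instead of Theorem \ref{FontThm}. You also make explicit the injectivity of local into generic $\Ext^1$, which the paper uses tacitly. Strictly speaking, $\Ext^1_{\cO[1/2]}(G_\rho,G_{\rho'})$ also contains exponent-$4$ classes beyond $H^1$. The splitting condition at $S$ excludes them, so your count is unaffected.
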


\proof 
By assumption, $S$ is not empty and $K(G)/K$ is unramified at odd places.  At each prime $\pi$ in $S$, the extension \eqref{Ext21} splits over $\cO_\pi$ and $\Hom_{\cO_\pi}(G_\rho,G_{\rho'})=0$ because of the connected-\'etale exact sequence.  Then $2 G = 0$ over $\cO_\pi$ and therefore over $\cO$.  Hence $K(G)$ is at most quadratic over $K$.

At primes $\lambda$ over 2 dividing neither $\rho$ nor $\rho'$, the extension $G$ is \'etale, so $K_\lambda(G)/K$ is unramified.  By duality, $K_\lambda(G)/K$ is unramified if $\lambda$ divides both $\rho$ and $\rho'$.  It follows that only the primes $\pi'$ in $S'$ may ramify in $K(G)$.   For those, the conductor exponent satisfies $\gf_{\pi'} \le 2$ by Theorem \ref{FontThm}.  Thus $K(G)$ is contained in $M$ and is quadratic over $K$.  In addition, the image of the Galois module extension class $[G]$ in $\Ext^1_{\cO[\frac{1}{2}]}(G_\rho,G_{\rho'})$ corresponds to a character $\chi_G\!: \, \Gal(M/K) \to \Mu_2$.   

We summarize information about homomorphisms needed to apply the Mayer-Vietoris sequence \eqref{MV}.  Let $\widehat{\cO} = \cO \otimes \Z_2 = \oplus_{\lambda \vert 2} \, \cO_\lambda$, let $\widehat{K} = K \otimes \Z_2$ and $t$ be the number of primes over 2 in $\cO$. 
 \begin{enumerate}[\, i)]
 \item $\Hom_{\cO_\lambda}(\Z/2\Z,\Mu_2) = \Z/2\Z$ and $\Hom_{\cO_\lambda}(\Mu_2,\Z/2\Z) = 0$.  \vspace{2 pt}
 
\item $\Hom_{\cO}(G_\rho,G_{\rho'})=0$, since these homomorphisms are trivial at all $\pi$ in $S$.  \vspace{2 pt}

\item $\Hom_{\widehat{\cO}}(G_\rho,G_{\rho'}) = \oplus_{\lambda \vert 2} \Hom_{\cO_\lambda}(G_\rho,G_{\rho'}) \simeq (\Z/2\Z)^{t-s}$.  \vspace{2 pt}

\item $\Hom_{\cO[\frac{1}{2}]}(G_\rho,G_{\rho'}) \simeq \Z/2\Z$, since $G_\rho$ and $G_{\rho'}$ become trivial Galois module of order 2 over $K$.   \vspace{2 pt}

\item $\Hom_{\widehat{K}}(G_\rho,G_{\rho'}) = \oplus_{\lambda \vert 2}\Hom_{K_\lambda}(G_\rho,G_{\rho'}) \simeq (\Z/2\Z)^t$, since $G_\rho$ and $G_{\rho'}$ are trivial Galois modules for the decomposition group at each prime $\lambda$ over $2$.
\end{enumerate}
Then the Mayer-Vietoris sequence \eqref{MV} for $\Ext_{\cO}^1(G_\rho,G_{\rho'})$ reduces to 
$$
0 \to 0 \to  \left(\Z/2\Z\right)^{t-s} \times  \Z/2\Z \to \left(\Z/2\Z\right)^t \to \Ext_{\cO}^1(G_\rho,G_{\rho'}) \xrightarrow{f} \dots
$$ 
Next, we analyze the image of $f$ in the remaining fragment of the sequence:
$$
\Ext_{\cO}^1(G_\rho,G_{\rho'}) \xrightarrow{f} \Ext_{\widehat{\cO}}^1(G_\rho,G_{\rho'}) \times \Ext_{\cO[\frac{1}{2}]}^1(G_\rho,G_{\rho'}) \to \Ext_{\widehat{K}}^1(G_\rho,G_{\rho'}).
$$ 
Let $[\cH_\lambda]$ denote an extension class in $\Ext_{\cO_\lambda}^1(G_\rho,G_{\rho'})$.  Let $[\cG]$ denote an extension class in $\Ext_{\cO[\frac{1}{2}]}^1(G_\rho,G_{\rho'})$, which we have identified with extension classes in the category of Galois modules over $K$.   Then $(([\cH_\lambda])_\lambda, \cG(\ov{K})) = f(\cG)$ is in the image of $f$ if and only if, for all $\lambda$, we can find $[\cH_\lambda]$ whose image in in $ \Ext_{K_\lambda}^1(G_\rho,G_{\rho'})$ agrees with the images of $[\cG]$ as extension classes of Galois modules over $K_\lambda$. For this, we arrive at precisely the conductor and splitting conditions defining $M$.

Thus the image of $f$ corresponds to the group of characters $\chi\!: \, \Gal(M/K) \to \Mu_2$.  It follows that 
$$
\vv{\Ext_{\cO}^1(G_\rho,G_{\rho'})}= 2^{s-1} \, \fdeg{M}{K}.   \hspace{20 pt}   \qed
$$

\vspace{5 pt}

Special case (i) below is in \cite[Prop. 4.3(ii)]{Sch7a}.  For the explicit description of $\Ext_{\cO}^1(G_\rho,\Z/2\Z)$, see the discussion after the proof of that Proposition.

\begin{cor}  \label{ExtDimCor}
If $s = \#\{\pi \text{ dividing } \rho \, \} \ge 1$, then:
\begin{enumerate}[{\rm i)}] 
\item $\Ext_{\cO}^1(G_\rho,\Z/2\Z)$ is an elementary $2$-group of rank $s-1$.  \vspace{2 pt}
\item The classes $[G]$ in $\Ext_{\cO}^1(G_\rho,\Z/2\Z)$ correspond to  factorizations $\rho=\rho_1\rho_2$ such that $G \simeq G_{\rho_1} \times G_{\rho_2}$ and \,
$
 0 \to \Z/2\Z \to G_{\rho_1} \times G_{\rho_2} \to G_\rho \to 0.
$
\end{enumerate}
\end{cor}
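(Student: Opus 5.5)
Apply Proposition \ref{ExtDim} with $\rho' = 1$, so that $G_{\rho'} = \Z/2\Z$. Then $S$ is the set of all primes $\pi$ dividing $\rho$, whose size $s \ge 1$ matches the hypothesis of the corollary. No prime divides $\rho'$, so $S' = \emptyset$, $P' = 1$, and $M$ is the maximal elementary $2$-extension of $K$ of conductor dividing $\infty$ that splits at every $\pi \in S$. By Hypothesis {\bf K}, $h_K^+ = 1$, so the narrow ray class field of modulus $\infty$ is $K$ itself. Hence $M = K$, and Proposition \ref{ExtDim} gives $\vv{\Ext^1_\cO(G_\rho,\Z/2\Z)} = 2^{s-1}$. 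The same proposition shows that every extension $G$ is killed by $2$ and that $K(G)\subseteq M = K$, so $G$ is a trivial Galois module.

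To show the Ext group is elementary of rank $s-1$, and to prove (ii), I exhibit $2^{s-1}$ explicit classes. Fix a factorization $\rho = \rho_1\rho_2$ into divisors of $2$ in $\cO$ (since $h_K = 1$, each prime ideal over $2$ is principal, so such factorizations come from splitting the set of primes dividing $\rho$ into two parts). I need a map $G_{\rho_1}\times G_{\rho_2}\to G_\rho$ whose kernel is $\Z/2\Z$. From the Hopf algebra description in \cite{TO}, there are multiplication morphisms $G_{\rho_1}\times G_{\rho_2}\to G_{\rho_1\rho_2}$. At the level of coordinate rings, one sends $X \mapsto X_1X_2$ up to the appropriate normalization. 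I will check that this respects comultiplication and that its kernel is the diagonally embedded étale subgroup. This kernel is $\Z/2\Z$ because locally at each $\lambda\mid 2$ at most one of $G_{\rho_1},G_{\rho_2}$ is $\Mu_2$, by \eqref{SimpleLocal}.

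As a check, the map is surjective on generic fibers (trivial Galois modules of order $4$ onto order $2$) and is flat because it is a quotient by a finite flat subgroup. The unordered factorizations $\{\rho_1,\rho_2\}$ up to units number $2^{s}/2 = 2^{s-1}$. The split extension corresponds to $\{\rho,1\}$.

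The main obstacle is showing that these extensions are pairwise non-isomorphic, so that they exhaust the group. I would distinguish them locally. Over $\cO_\lambda$, the middle term $G_{\rho_1}\times G_{\rho_2}$ is $\Mu_2\times\Z/2\Z$ when $\lambda$ divides $\rho$. The extension is split locally, but the position of the \'etale subgroup, i.e.\ which local connected-\'etale splitting is used, records whether $\lambda$ divides $\rho_1$ or $\rho_2$. Equivalently, in the Mayer--Vietoris sequence \eqref{MV}, the class is the image under $\delta$ of the element of $(\Z/2\Z)^t$ whose $\lambda$-coordinate records this choice, taken modulo the image of $(\Z/2\Z)^{t-s}\times\Z/2\Z$. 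That image quotient is exactly $(\Z/2\Z)^s/\text{diagonal}$, which is elementary of rank $s-1$. Identifying $\delta$ of the indicator vector of $\{\lambda : \lambda\mid\rho_1\}$ with the class of $G_{\rho_1}\times G_{\rho_2}$ gives both (i) and (ii).
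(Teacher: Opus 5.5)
Your proposal is correct. Part (i) follows the paper's intended route. The corollary is stated without proof right after Proposition \ref{ExtDim}, and taking $\rho'=1$ gives $S'=\emptyset$, $P'=1$ and $M=K$ because $h_K^+=1$, so $\vv{\Ext^1_\cO(G_\rho,\Z/2\Z)}=2^{s-1}$. That the group is elementary is automatic: multiplication by $2$ on $\Ext^1_\cO(G_\rho,\Z/2\Z)$ is induced by $2=0$ on $\Z/2\Z$. Your Mayer--Vietoris cokernel $(\Z/2\Z)^s/\mathrm{diagonal}$ gives the same conclusion.

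For (ii) the paper has no argument of its own; it refers to the discussion after \cite[Prop.~4.3]{Sch7a}. Your construction is therefore a self-contained alternative, and it is sound:
\begin{enumerate}[\, $\bullet$]
\item The map $G_{\rho_1}\times G_{\rho_2}\to G_\rho$ is the sum of the two Tate--Oort homomorphisms $G_{\rho_i}\to G_\rho$. In grouplike coordinates it is $y\mapsto y_1y_2$, which is what your ``up to normalization'' has to mean.
\item Its kernel is \'etale because $\rho_1,\rho_2$ are coprime, since $2$ is unramified.
\item It is faithfully flat: at $\lambda\mid\rho_i$ its restriction to $G_{\rho_i}\simeq\Mu_2$ is already an isomorphism onto $G_\rho$, and at the remaining primes everything is \'etale.
\item Counting the $2^{s-1}$ unordered partitions against the order from Proposition \ref{ExtDim} gives exhaustion once the classes are distinct.
\end{enumerate}

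Distinctness is the one step you only sketch. Your $\delta$-description is right. Take the generic section $g\mapsto(g,0)$. At $\lambda\mid\rho$ the local section is forced because $\Hom_{\cO_\lambda}(\Mu_2,\Z/2\Z)=0$, so the local discrepancy is trivial for $\lambda\mid\rho_1$ and nontrivial for $\lambda\mid\rho_2$. However, this rests on conventions for $\delta$ that the paper does not spell out.

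A shorter route avoids $\delta$ entirely. By \cite[Lemma 2.8]{CG}, the closed flat subgroup schemes of order $2$ in $G_{\rho_1}\times G_{\rho_2}$ are the closures of the three lines of the trivial Galois module $(\Z/2\Z)^2$. These are $G_{\rho_1}$, $G_{\rho_2}$ and the \'etale kernel $\Z/2\Z$. So the middle term alone recovers $\{\rho_1,\rho_2\}$ up to units, distinct partitions give distinct classes, and your count finishes (ii).
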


\begin{prop}  \label{AltNonSplit}  
Let $2 = \pi_1 \pi_2 \pi_3$ be the product of $3$ primes in $\cO$.  Suppose that there are no quadratic extensions of $K$ with conductor dividing $\infty\pi_3^2$, split over $\pi_1$ and $\pi_2$.   Then $\vv{\Ext^1_\cO(G_{\pi_1 \pi_2},G_{\pi_3})} = 2$ and if $G$ represents the non-trivial class, then $K(G)=K$.  Furthermore, $G$ also represents a  non-trivial class in $\Ext^1_\cO(G_{\pi_j \pi_k},G_{\pi_i})$ for each ordering of $\{i,j,k\} = \{1,2,3\}$.
\end{prop}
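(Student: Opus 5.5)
Apply Proposition \ref{ExtDim} with $\rho = \pi_1\pi_2$ and $\rho' = \pi_3$. Then $S = \{\pi_1,\pi_2\}$, so $s = 2$, and $S' = \{\pi_3\}$, so $P' = \pi_3$. The field $M$ there is the maximal elementary $2$-extension of $K$ of conductor dividing $\infty\pi_3^2$ that is split over $\pi_1$ and $\pi_2$. By hypothesis, no quadratic extension of this kind exists, so $M = K$. Proposition \ref{ExtDim} then gives
$$
\vv{\Ext^1_\cO(G_{\pi_1\pi_2},G_{\pi_3})} = 2^{s-1}\fdeg{M}{K} = 2,
$$
and also $K(G) \subseteq M = K$ for every extension $G$. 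So the non-trivial class $G$ has trivial Galois action, $K(G)=K$, and $2G=0$. As a Galois module, $G(\ov{K}) \simeq (\Z/2\Z)^2$ is trivial.

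For the last assertion, I would first identify the non-trivial $G$ locally at each $\pi_i$ using \eqref{SimpleLocal}.
\begin{enumerate}[\, $\bullet$]
\item At $\pi_1$ and $\pi_2$: the sub $G_{\pi_3}$ is \'etale and the quotient $G_{\pi_1\pi_2}$ is $\Mu_2$. The connected-\'etale sequence splits this, so $G \simeq \Mu_2 \times \Z/2\Z$ over $\cO_{\pi_1}$ and over $\cO_{\pi_2}$.
\item At $\pi_3$: the sub is $\Mu_2$ and the quotient $\Z/2\Z$. Since the Galois module is trivial and $G$ is a finite flat model of it over the unramified ring $\cO_{\pi_3}$, the extension is again split, $G \simeq \Mu_2\times\Z/2\Z$. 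I would justify this through the Mayer--Vietoris description, or via Lemma \ref{Kum}: over $\cO_{\pi_3}$ the class is a unit Kummer class that becomes trivial over $K$.
\end{enumerate}

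Now $G(\ov{K}) \simeq (\Z/2\Z)^2$ has three subgroups of order $2$. By the bijection of \cite[Lemma 2.8]{CG}, these correspond to three closed flat subgroup schemes $H_1, H_2, H_3$ of order $2$. One of them is $G_{\pi_3}$; call it $H_3$. Each $H_i$ is some $G_{\rho}$ with $\rho \mid 2$, and $\rho$ is determined locally by whether $H_i$ is $\Mu_2$ or \'etale at each prime. Locally $G \simeq \Mu_2\times\Z/2\Z$ everywhere, so at each $\pi$ exactly one of the three order-$2$ subgroups is the connected $\Mu_2$, and the other two are \'etale.

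The main step is to show that the connected subgroup at $\pi_k$ is $H_k$, so that $H_i \simeq G_{\pi_i}$. I would first try a counting argument. The quotient $G/H_i$ is Cartier-dual-compatible with Cartier duality on $G$: indeed $G^\wedge$ is an extension of $G_{\pi_3\cdot?}$-type and the orders add up, so the multiplicities of $\Mu_2$ among the sub and quotient at each prime must match $G$'s local type. Hence $G/H_i \simeq G_{\pi_j\pi_k}$, since at each prime the quotient is $\Mu_2$ exactly where $H_i$ is \'etale. If the distribution were not $H_i \leftrightarrow \pi_i$, then some $H_i$ would be connected at two primes, or at none. In the latter case $H_i = \Z/2\Z$ would split off $G$ (using Corollary \ref{ExtDimCor}) and would force the original class to be trivial; the former is handled symmetrically by Cartier duality.

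Once $H_i \simeq G_{\pi_i}$ with quotient $G_{\pi_j\pi_k}$ is established, non-triviality of the resulting extension follows. A splitting $G \simeq G_{\pi_i}\times G_{\pi_j\pi_k}$ would produce a subgroup scheme $G_{\pi_j\pi_k}$ inside $G$, that is, a fourth order-$2$ subgroup scheme beyond $H_1,H_2,H_3$, which contradicts the bijection with the three Galois submodules.

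I expect this last bookkeeping step, matching the subgroup schemes to the primes, to be the main obstacle.
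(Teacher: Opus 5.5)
Your proposal is correct and follows essentially the paper's proof. Proposition \ref{ExtDim} gives $M=K$, $\vv{\Ext^1_\cO(G_{\pi_1\pi_2},G_{\pi_3})}=2$ and $K(G)=K$. The three lines of $G(K)$ then give three order-$2$ subgroup schemes, and the fact that $G$ has a connected part of order exactly $2$ over each $\cO_{\pi_i}$ pins down their types. A subgroup isomorphic to $G_{\pi_1\pi_2}$ is excluded because it would split the defining sequence, and the new extensions are non-split because $G$ contains no subgroup isomorphic to $G_{\pi_j\pi_k}$.

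Your pigeonhole (exactly one connected line at each prime) is a slightly tidier packaging of the paper's case analysis on $\rho$. The only bad distribution, $\{\Z/2\Z, G_{\pi_1\pi_2}\}$, is killed directly by that splitting observation. So your appeals to Corollary \ref{ExtDimCor} and to Cartier duality at that point are unnecessary, and the duality remark does not actually exchange the two cases.
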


\begin{proof}
The class of $G$ in $\Ext^1_\cO(G_{\pi_1 \pi_2},G_{\pi_3})$ is represented by an exact sequence
\begin{equation} \label{312}
0 \to G_{\pi_3} \to G \to G_{\pi_1 \pi_2} \to 0.
\end{equation} 
Apply Proposition \ref{ExtDim} with $P' = \pi_3$ and $S =\{\pi_1,\pi_2\}$.  Then $2G = 0$ and our assumption on quadratic extensions of $K$ implies that  $K(G) = K$.  Hence $\vv{\Ext^1_\cO(G_{\pi_1 \pi_2},G_{\pi_3})} = 2$ and the Galois module $G(K)$ splits as an extension of $\F_2$ by $\F_2$.

According to \eqref{312}, there is a closed subgroup scheme $H_1$ of $G$ isomorphic to $G_{\pi_3}$.  We wish to identify the closed subgroups schemes of $G$ associated to the other points of order 2 in $G(K)$.  If $H_2 \simeq G_\rho$ is any such, then the Galois modules of $H_1$ and $H_2$ generate $G(K)$.

If $\pi_3 \vert \rho$, then the connected component of $G$ at $\pi_3$ is 2-dimensional and we have a contradiction.  Hence, up to a unit, $\rho$ is in the set $\{1, \pi_1, \pi_2, \pi_1 \pi_2 \}$.  But $\rho \not\sim \pi_1 \pi_2$ because a closed subgroup scheme of $G$ isomorphic to $G_{\pi_1\pi_2}$ would split exact sequence \eqref{312}.

If $\rho \sim 1$, then $H_2 \simeq \Z/2\Z$ leads to an exact sequence of the form
$$
0 \to \Z/2\Z \to G \to \Mu_2 \to 0,
$$
in which the quotient is isomorphic to $\Mu_2$ because $G$ has a non-trivial connected component at each of the primes $\pi_1,\pi_2,\pi_3$.  By Corollary \ref{ExtDimCor}(ii), $G$ is isomorphic to a product $G_{\rho_1} \times G_{\rho_2}$ with $\rho_1 \, \rho_2 = 2$.  One of these factors, say $\rho_1$ is divisible by $\pi_3$.  Note that $\rho_1 \not\sim \pi_3$ because $\rho_2 \sim \pi_1\pi_2$ would then give a closed subgroup scheme of $G$ isomorphic to $G_{\pi_1 \pi_2}$.  Hence  $\rho_1 \sim \pi_3 a$, where $a$ is divisible by an additional prime factor of 2.   But then the connected component of $G$ at $\pi_3$ is 2-dimensional, as in the contradiction noted above.  It follows $\rho \sim \pi_1$ or $\pi_2$ and we shall see that both occur.

Suppose that $H_2 \simeq G_{\pi_1}$.  Then there is an exact sequence
\begin{equation} \label{123}
0 \to G_{\pi_1} \to G \to G_{\pi_2 \pi_3} \to 0, 
\end{equation}
in which the quotient is isomorphic to $G_{\pi_2 \pi_3}$ because $G$ has a non-trivial connected component at each of the primes $\pi_2$,  $\pi_3$.  The sequence \eqref{123} does not split because $G$ has no closed subgroup scheme isomorphic to $G_{\pi_2 \pi_3}$.  Thus $G$ represents a non-trivial class in $\Ext^1_\cO(G_{\pi_2 \pi_3}, \pi_1)$.

Let $P$ and $Q$ be generators for the Galois submodules of $G(K)$ associated to $H_1$ and $H_2$, respectively.  There there is a closed subgroup scheme $H_3 \simeq G_{\tau}$ of $G$ associated to the Galois module $\lr{P+Q}$.  If $\tau \sim \pi_1$, then $H_2$ and $H_3$ create a 2-dimensional connected component of $G$ at $\pi_1$ and so a contradiction.  Hence $\tau \sim \pi_2$ and $G$ also represents a non-trivlal class in $\Ext^1_\cO(G_{\pi_1 \pi_3}, \pi_2)$, as argued above.
\end{proof}

\begin{prop}  \label{z7} 
Let $2 = \rho \bar{\rho}$ in $\cO$, with $\rho \not \sim 1,2$. If there is no quadratic extension of $K$ whose conductor divides $\infty\rho^2$, then 
$$
\Ext^1_\cO(G_\rho,G_\rho) = \Ext^1_\cO(G_{\bar{\rho}},G_{\bar{\rho}}) =  0.
$$
If $V$ is a group scheme over $\cO$ filtered by copies of $G_\rho$ or filtered by copies of $G_{\bar{\rho}}$, then $2V = 0$.                                                                                                                                                                         
\end{prop}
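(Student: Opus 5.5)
Since $\rho \not\sim 1, 2$, there is a prime over $2$ dividing $\rho$ and a prime over $2$ dividing $\bar\rho$. The plan is to compute $\Ext^1_\cO(G_\rho,G_\rho)$ with the Mayer--Vietoris sequence \eqref{MV}, in the same way as in the proof of Proposition \ref{ExtDim}. We cannot quote Proposition \ref{ExtDim} directly, because here $S = \emptyset$. Write $t$ for the number of primes $\lambda \mid 2$.

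First, the Hom terms. Locally, $G_\rho$ is $\Mu_2$ or $\Z/2\Z$ at each $\lambda$, so $\Hom_{\cO_\lambda}(G_\rho,G_\rho) = \Z/2\Z$ and $\Hom_{\widehat\cO}(G_\rho,G_\rho) \simeq (\Z/2\Z)^t$. Over the generic fibres we get $\Hom_{\cO[\frac12]} = \Z/2\Z$ and $\Hom_{\widehat K} = (\Z/2\Z)^t$. The map $(\Z/2\Z)^t \times \Z/2\Z \to (\Z/2\Z)^t$ is therefore surjective, and it has kernel equal to the diagonal $\Hom_\cO(G_\rho,G_\rho)=\Z/2\Z$. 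Hence $\delta = 0$, and $\Ext^1_\cO(G_\rho,G_\rho)$ injects into the fibre product of $\Ext^1_{\widehat\cO}$ and $\Ext^1_{\cO[\frac12]}$ over $\Ext^1_{\widehat K}$.

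Next, take an extension $0\to G_\rho\to G\to G_\rho\to 0$ and show that its field of points is at most quadratic. At each $\lambda$, the group $G$ is an extension of $\Mu_2$ by $\Mu_2$ or of $\Z/2\Z$ by $\Z/2\Z$. In the \'etale case $K_\lambda(G)/K_\lambda$ is unramified. By Cartier duality the same holds in the multiplicative case: the extension is then of multiplicative type, its dual is \'etale, and its points are defined over an unramified extension. So $K(G)/K$ is unramified at every finite place, and as a $2$-extension it has to be trivial because $h_K^+=1$. We still have to check the infinite places; I expect this part, and the precise local analysis, to be the main obstacle. Comparing with Proposition \ref{ExtDim}, the answer should be that the Galois class lies in $\Hom(\Gal(\bar K/K),\Z/2\Z)$ with conductor dividing $\infty\rho^2$ or $\infty\bar\rho^2$, coming from the local $\Ext^1_{\cO_\lambda}(\Mu_2,\Mu_2)$ Kummer classes of units (conductor at most $\lambda^2$ by Theorem \ref{FontThm}). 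The hypothesis that no quadratic extension has conductor dividing $\infty\rho^2$ is exactly what kills this class. The Galois extension class is a cocycle with values in $\Hom(\F_2,\F_2)$, i.e.\ a character, and it is unramified at the primes dividing $\bar\rho$. So its conductor divides $\infty\rho^2$, the hypothesis forces the character to be trivial, and $G(\ov K)$ splits as a Galois module.

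Then we handle the local components. Once the Galois class is trivial, compatibility in the fibre product forces each local class to lie in the kernel of $\Ext^1_{\cO_\lambda}(G_\rho,G_\rho)\to \Ext^1_{K_\lambda}$. That kernel vanishes: Lemma \ref{Kum}-type injectivity holds locally for $\Mu_2$ by $\Mu_2$ and for $\Z/2\Z$ by $\Z/2\Z$, since a finite flat model is determined by its generic fibre for \'etale or multiplicative group schemes. Hence $\Ext^1_\cO(G_\rho,G_\rho)=0$. For $\bar\rho$ we apply Cartier duality, which gives $\Ext^1(G_{\bar\rho},G_{\bar\rho})\simeq \Ext^1(G_\rho,G_\rho)$ and so also vanishes.

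Finally, let $V$ be filtered by copies of $G_\rho$. We induct on the length of the filtration to show $V\simeq G_\rho^n$, using the vanishing of $\Ext^1$: an extension of $G_\rho$ by $G_\rho^{n-1}$ is classified by $\Ext^1(G_\rho,G_\rho)^{n-1}=0$, so it splits. It follows that $2V=0$, and the same argument applies to $G_{\bar\rho}$.
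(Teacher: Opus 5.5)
There is a genuine gap: you never deal with the possibility that $G(\ov{K}) \simeq \Z/4\Z$, and that is exactly the case the hypothesis on $\infty\rho^2$ is needed for.

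Your claim that Cartier duality makes $K_\lambda(G)/K_\lambda$ unramified at $\lambda \mid \rho$ holds only when $2G=0$. In general $G^\wedge(\ov{K}) = \Hom(G(\ov{K}),\Mu_4)$. So if $G$ has exponent $4$ and Galois acts on $G(\ov{K})\simeq \Z/4\Z$ through a character $\chi$, the character $\chi'$ of $G^\wedge$ satisfies $\chi\chi' = w_4$, the character of $K(i)/K$. Since $G^\wedge$ is \'etale at $\lambda \mid \rho$, $\chi'$ is unramified there. Hence $\chi$ agrees with $w_4$ on inertia at $\lambda$, and $K(G)/K$ is ramified at $\lambda$. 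The local model is $\Mu_4$, an extension of $\Mu_2$ by $\Mu_2$ over $\cO_\lambda$ whose points generate the ramified extension $K_\lambda(i)$. A warning sign: if your unramifiedness argument were right, the proposition would hold with no hypothesis at all.

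Your second attempt fails for the same reason. You argue that the Galois class is a character of conductor dividing $\infty\rho^2$, that the hypothesis kills it, and that $G(\ov{K})$ therefore splits. But $\Z/4\Z$ with trivial action is a non-split extension of $\F_2$ by $\F_2$ whose character is trivial. Extensions of $\Z/2\Z$ by $\Z/2\Z$ as Galois modules are not classified by $H^1(\Gal(\ov{K}/K),\F_2)$ alone. Two smaller points: the local conductor of $\chi$ in exponent $4$ does not come from Theorem \ref{FontThm}, which for $n=2$ only bounds the conductor exponent by $3$. And the infinite places are no obstacle, since $h_K^+=1$ already excludes every nontrivial extension unramified at all finite primes.

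The missing idea is to split according to the exponent of $G$, as the paper does.

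\emph{Exponent $2$.} If $2G=0$, your argument is correct and uses no hypothesis. $K(G)/K$ is unramified at every finite place, so $K(G)=K$. Each local class is determined by its split Galois module, and the injectivity of $f$ from Mayer--Vietoris gives $[G]=0$.

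\emph{Exponent $4$.} Here the relation $\chi\chi'=w_4$ does the work. $\chi$ is unramified outside $\rho\infty$, $\chi'$ is unramified outside $\bar\rho\infty$, and $w_4$ has conductor $4\infty$ because $2$ is unramified in $K$. So the finite part of the conductor of $\chi$ is exactly $\rho^2\ne 1$. Then $\chi$ cuts out a quadratic extension of $K$ with conductor dividing $\infty\rho^2$, contradicting the hypothesis.

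The rest of your plan is fine: the Mayer--Vietoris computation showing $\delta=0$, local injectivity for \'etale and multiplicative extensions, passing to $\bar\rho$ by duality, and the induction giving $V\simeq G_\rho^n$ and $2V=0$.
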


\begin{proof}
This argument parallels the proof in \cite[Prop. 4.2(iv)]{Sch3}.   Consider a non-trivial extension $0 \to G_\rho \to G \to G_\rho \to 0$.  Since the points of $G_\rho$ are in $K$, the degree $\fdeg{K(G)}{K} \le 2$.  Let $\pi$ range over the primes of $\cO$ above 2.  By \eqref{SimpleLocal}, $G$ becomes an extension of $\Mu_2$ by $\Mu_2$ over $\cO_\pi$ if $\pi\vert \rho$.  Also, $G$ becomes an extension of $\Z/2\Z$ by $\Z/2\Z$ over $\cO_\pi$ if $\pi \nmid \rho$.

Let $t$ be the number of primes over 2 in $\cO$ and recall that $\widehat{\cO} = \cO \otimes \Z_2 = \oplus_{\pi \vert 2} \, \cO_\pi$.   Based on \eqref{SimpleLocal}, we have
$\Hom_{\cO}(G_\rho,G_\rho) = \F_2$ and 
$$
\Hom_{\widehat{\cO}}(G_\rho,G_\rho) = \oplus_{\pi \vert 2} \Hom_{\cO_\pi}(G_\rho,G_\rho) = \F_2^t.
$$
As a result, the Mayer-Vietoris sequence \eqref{MV} for $\Ext_{\cO}^1(G_\rho,G_\rho)$ reduces to 
$$
0 \to \F_2 \to \F_2^t \times \F_2 \to  \F_2^t \to \Ext_{\cO}^1(G_\rho,G_\rho) \xrightarrow{f} \dots
$$ 
Therefore, $f$ is injective in the continuation of the above sequence:
$$
\Ext_{\cO}^1(G_\rho,G_\rho) \xrightarrow{f} \Ext_{\widehat{\cO}}^1(G_\rho,G_\rho) \times \Ext_K^1(G_\rho,G_\rho).
$$

\vspace{2 pt}

$\bullet$ Case 1:  Suppose that $G$ has exponent 2 over $\cO_{\pi_0}$ for some $\pi_0 \vert 2$.  Then $2G = 0$ globally over $\cO$ and so for all $\pi \vert 2$.   Because $G$ is \'etale at primes $\pi \nmid \rho$, the extension $K_\pi(G)/K_\pi$ is unramified and at most quadratic.   By duality, the same is true for $K_\pi(G)/K_\pi$ when $\pi \vert \rho$.  But then $K(G)$ is an unramified extension of $K$ at all finite places and so $K(G) = K$.  It follows that the Galois module $G(K)$ splits as an extension of $\F_2$ by $\F_2$ of exponent 2.  

We have already seen that the second coordinate of $f([G])$ is the trivial extension class over $K$.  Over the completion $\cO_\pi$, $G$ is an extension of $\Mu_2$ by $\Mu_2$ or $\Z/2\Z$ by $\Z/2\Z$.  Such an extension is determined by its Galois module \cite{Ray}, which is a split extension in our case.  Hence $f([G])$ is trivial and so $G$ splits over $\cO$.

\vspace{4 pt}

$\bullet$ Case 2:  Suppose that $G$ has exponent 4 over $\cO_\pi$ for all $\pi \vert 2$.  Let $\chi$ be the character for the action of Galois on the points of $G$ and let $N_\chi$ be the finite part of the conductor of $\chi$.  Consider the dual extension 
$$
0 \to G_{\bar{\rho}}\to G^\wedge \to G_{\bar{\rho}} \to 0, 
$$ 
with associated character $\chi'$ and conductor $N_{\chi'}$.  By duality, $\chi\chi' = w_4$, where $w_4$ is the character of $\Gal(K(i)/K)$.  Since 2 is unramified in $K/\Q$, the ray class conductor for $K(i)/K$ is $4 \infty$ and so $N_\chi \, N_{\chi'} = 4$.  But $K(G_\rho)/K$ is unramified when $\pi \nmid \rho$, so $N_\chi = \rho^2$ and similarly, $N_{\chi'} = \bar{\rho}^{\, 2}$, neither of which is a unit.  By assumption, there is no quadratic extension of $K$ of conductor dividing $\rho^2$, so we have a contradiction.

Finally, the splittings just verified imply that $2V = 0$.
\end{proof}

\begin{cor} \label{z7Cor}
Suppose that $K^{\gp} = K$ for some prime divisor $\gp$ of $2\cO$ and let $\rho \vert 2$ in $\cO$, with $\rho \not\sim1,2$.  If $V$ is filtered by copies of $G_\rho$, then $2V = 0$.
\end{cor}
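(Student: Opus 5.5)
Corollary \ref{z7Cor} will follow from Proposition \ref{z7} once we check its hypothesis: that no quadratic extension of $K$ has ray class conductor dividing $\infty\rho^2$. We will derive this from $K^{\gp}=K$.

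Since $\rho \not\sim 1$, some prime divides $\rho$. Since $\rho\not\sim 2$ and $2\cO$ is squarefree by Hypothesis {\bf K}, some prime divisor of $2$ does not divide $\rho$. There are two cases, according to whether $\gp$ divides $\rho$. By symmetry of the statement of Proposition \ref{z7} in $\rho$ and $\bar\rho$, we may replace $\rho$ by $\bar\rho$ if needed. Proposition \ref{z7} needs the condition for only one of $\rho,\bar\rho$, and it then yields the vanishing of both Ext groups. So we may assume $\gp \nmid \rho$, i.e.\ $\gp\mid\bar\rho$.

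Then $\rho$ divides $\gc = 2\gp^{-1}$, so $\infty\rho^2$ divides $\infty\gc^2$. Hence any quadratic extension of $K$ with conductor dividing $\infty\rho^2$ lies in $K^{\gp}$, which is the maximal elementary $2$-extension of $K$ with conductor dividing $\infty\gc^2$. As $K^{\gp}=K$, no such extension exists. Proposition \ref{z7} now gives $\Ext^1_\cO(G_\rho,G_\rho)=\Ext^1_\cO(G_{\bar\rho},G_{\bar\rho})=0$ and $2V=0$ for any $V$ filtered by copies of $G_\rho$ (or of $G_{\bar\rho}$). This is the claim.

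The only delicate step is justifying the swap of $\rho$ and $\bar\rho$. I would check this directly against the proof of Proposition \ref{z7}. In Case 2 there, the contradiction uses $N_\chi=\rho^2$ and $N_{\chi'}=\bar\rho^{\,2}$ symmetrically, so the absence of quadratic extensions of conductor dividing $\infty\bar\rho^{\,2}$ serves equally well. Applying that argument to $G^\wedge$ handles extensions of $G_\rho$ by $G_\rho$ via Cartier duality. Alternatively, one can simply apply Proposition \ref{z7} to $\bar\rho$ in place of $\rho$; its conclusion covers both $G_\rho$ and $G_{\bar\rho}$ since $\bar{\bar\rho}=\rho$.
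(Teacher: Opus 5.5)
Your proposal is correct and is the paper's argument: $K^{\gp}=K$ rules out quadratic extensions of $K$ with conductor dividing $\infty\gc^2$, exactly one of $\rho,\bar\rho$ divides $\gc$, and Proposition \ref{z7} is applied to that one. The "delicate" swap needs no re-examination of Case 2, because the conclusion of Proposition \ref{z7} already covers copies of both $G_\rho$ and $G_{\bar\rho}$, as your own alternative remark observes.
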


\begin{proof}
By assumption, there is no quadratic extension of $K$ whose conductor divides $\infty \gc^2$, where $\gc = 2 \gp^{-1}$.  Then $\rho \vert \gc$ or else $\bar{\rho} \vert \gc$ and so $2V = 0$ by Proposition \ref{z7}.
\end{proof}

\begin{Def} \label{Fils}
If $\gF\!:  0 = H_0 \subset H_1 \subset \dots H_{s-1} \subset H_s = V$ is a filtration of a finite flat group scheme $V$ by closed subgroup schemes, then its grading is the list of consecutive quotients 
$$
\grad(\gF) = [H_1/H_0, H_2/H_1, \dots H_s/H_{s-1}].
$$  
Suppose that some subquotient $X = H_{i+2}/H_i$ is a split extension of group schemes of order 2: 
$$
0 \to G_{\rho'} \to X \to G_\rho \to 0, \quad \rho \not\sim \rho', 
$$
with $H_{i+1}/H_i \simeq G_{\rho'}$ and $H_{i+2}/H_{i+1} \simeq G_\rho$.  Then there also is a filtration
$$
\gF{\, '}\!: \,  0 = H_0 \subset \dots H_{i} \subset H'_{i+1} \subset H_{i+2}  \subset \dots \subset H_s = V
$$
with $H'_{i+1}/H_i \simeq G_{\rho}$ and $H_{i+2}/H'_{i+1} \simeq G_{\rho'}$.  We say that $G_\rho$ {\em moves to the left} and $G_{\rho'}$ {\em moves to the right}. 
\end{Def}

\begin{prop}  \label{MoveZMu} 
If $V$ is a prosaic group scheme over $\cO$, then there is a filtration 
\begin{equation} \label{ZMu1}
0 \subseteq V_1 \subseteq V_2 \subseteq V
\end{equation}
with composition series that have the following properties.
\begin{enumerate}[{\, \rm i)}]
\item All constituents of $V_1$ are isomorphic to $\Mu_2$.  \vspace{2 pt}
\item All constituents of $V/V_2$ are isomorphic to $\Z/2\Z$. \vspace{2 pt}
\item  No constituent of $V_2/V_1$ is isomorphic to $\Mu_2$ or $\Z/2\Z$.
\end{enumerate}
\end{prop}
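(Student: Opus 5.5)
Since $V$ is prosaic, it has a filtration whose constituents all lie in $\cT_K$, that is, each constituent is some $G_\rho$ with $\rho \vert 2$. The plan is to rearrange such a composition series by repeatedly applying the moves of Definition \ref{Fils}. We push every $\Mu_2 = G_2$ to the left, every $\Z/2\Z = G_1$ to the right, and leave the other constituents in the middle. The needed input is that the relevant two-step subquotients split. Concretely:
\begin{enumerate}[\, i)]
\item every extension of $G_\rho$ by $\Mu_2$ with $\rho \not\sim 2$ splits;
\item every extension of $\Z/2\Z$ by $G_\rho$ with $\rho \not\sim 1$ splits.
\end{enumerate}
Once these hold, $\Mu_2$ can move left past any adjacent constituent, and $\Z/2\Z$ can move right past any adjacent constituent.

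To get (i) and (ii), we use Proposition \ref{ExtDim} and its corollary. Take a sequence $0 \to \Mu_2 \to X \to G_\rho \to 0$, so $\rho' = 2$. Then $S = \{\pi : \pi\vert\rho, \pi\nmid 2\} = \emptyset$, so Proposition \ref{ExtDim} does not apply directly. It is cleaner to dualize. The Cartier dual of $X$ is an extension of $\Z/2\Z$ by $G_{\bar\rho}$, so (i) and (ii) are equivalent by duality, and it suffices to prove (ii).

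For (ii), consider $0 \to G_\rho \to X \to \Z/2\Z \to 0$ with $\rho \not\sim 1$. Here the set $S$ of Proposition \ref{ExtDim} (primes dividing $1$) is empty, so again that proposition is not applicable and a direct argument is needed. I would run the Mayer--Vietoris sequence \eqref{MV}, as in the proof of Proposition \ref{z7}. Alternatively, and more simply, I would use the connected-\'etale sequence at each prime $\pi\vert 2$. Over $\cO_\pi$, $X$ is an extension of an \'etale group by something, and $\Z/2\Z$ is \'etale. The point to establish is that $X$ is globally an extension whose \'etale quotient splits off.

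Better: the key fact is that $\Hom$ from $\Z/2\Z$ into a group scheme corresponds to global points extending over $\cO$. The quotient map $X \to \Z/2\Z$ admits a section precisely when the preimage of the generator contains a $K$-rational point that generates a closed subgroup scheme isomorphic to $\Z/2\Z$. I would show two things in turn:
\begin{enumerate}[\, a)]
\item $K(X)$ is an unramified extension of $K$, hence $K(X) = K$ by Hypothesis \textbf{K} together with the bound of Theorem \ref{FontThm};
\item the subgroup generated by a lifting point, which by the correspondence of \cite[Lemma 2.8]{CG} is a closed subgroup scheme of order 2, must be \'etale.
\end{enumerate}
For (b), at primes $\pi\vert\rho$, $X$ is locally an extension of $\Z/2\Z$ by $\Mu_2$. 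Such an extension over $\cO_\pi$ is classified by units modulo squares, and it splits exactly when the Kummer class is trivial. This is where Lemma \ref{Kum} enters, giving injectivity into Galois-module extensions, provided exponent 2 is known.

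The main obstacle is exponent and ramification control in the case $\rho$ has a prime $\pi$ with $\pi\nmid\rho$. There $X$ is locally an extension of $\Z/2\Z$ by $\Z/2\Z$, which could be unramified of exponent 4. The fallback I would try first is the observation that $\Ext^1_\cO(\Z/2\Z, G_\rho)$ is dual to $\Ext^1_\cO(G_{\bar\rho},\Mu_2)$, combined with the Corollary \ref{ExtDimCor}-type computation. Once the splittings hold, induction on the length of the filtration, moving each $\Mu_2$ left and each $\Z/2\Z$ right by finitely many swaps, produces $V_1$ and $V_2$ with properties (i)--(iii).
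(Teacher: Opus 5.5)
\textbf{There is a genuine gap.} Your argument needs every swap to be possible, and that is false.

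First, the splittings you formulate are the wrong ones. By Definition~\ref{Fils}, moving $\Mu_2$ to the left past $G_\rho$ requires the subquotient $0\to G_\rho\to X\to\Mu_2\to 0$ to split. That is a statement about a class in $\Ext^1_\cO(\Mu_2,G_\rho)$. Your (i) instead concerns $0\to\Mu_2\to X\to G_\rho\to 0$, where $\Mu_2$ already sits on the left; splitting it would move $\Mu_2$ to the right. Likewise, moving $\Z/2\Z$ to the right requires $0\to\Z/2\Z\to X\to G_\rho\to 0$ to split, not your (ii). Your (ii) is also false as stated: for $\rho\sim 2$, Lemma~\ref{Kum} gives $\Ext^1_\cO(\Z/2\Z,\Mu_2)\neq 0$. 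Your step (a) fails too, since $K(X)$ can be $K(\sqrt{-1})$, which is ramified over $2$.

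More seriously, the correct obstruction groups do not vanish either. By duality, $\Ext^1_\cO(\Mu_2,G_\rho)\simeq\Ext^1_\cO(G_{\bar\rho},\Z/2\Z)$. By Corollary~\ref{ExtDimCor}(i) this group has rank $s-1$, where $s$ is the number of primes dividing $\bar\rho$. For example, if $2=\pi_1\pi_2$, then $0\to\Z/2\Z\to G_{\pi_1}\times G_{\pi_2}\to\Mu_2\to 0$ is non-split. For that composition series, no sequence of swaps can put $\Mu_2$ before $\Z/2\Z$.

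\textbf{The missing idea} is that you must allow the constituents themselves to change, and argue by minimality.
\begin{enumerate}[\, 1.]
\item Choose a composition series of $V$ with the minimal number of constituents isomorphic to $\Mu_2$. Suppose some $\Mu_2$ is blocked by $G_\rho$, with $\rho\not\sim 2$. By the dual of Corollary~\ref{ExtDimCor}(ii), the non-split subquotient $X$ is isomorphic to $G_{\rho_1}\times G_{\rho_2}$ with $\rho_1\rho_2=2\rho$. Non-splitting forces neither factor to be $\Mu_2$. Refiltering $X$ through $G_{\rho_1}$ then lowers the number of $\Mu_2$'s, a contradiction. This produces $V_1$.
\item On $W=V/V_1$, among series with no $\Mu_2$, minimize the number of constituents $\Z/2\Z$. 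A blocked $\Z/2\Z$ gives, by Corollary~\ref{ExtDimCor}(ii), $X'\simeq G_{\rho_1}\times G_{\rho_2}$ with $\rho_1\rho_2=\rho$. Neither factor is $\Z/2\Z$, since the extension is non-split. Neither is $\Mu_2$, since that would force $\rho\sim 2$. Refiltering again gives a contradiction, and this produces $V_2$.
\end{enumerate}
In the example above, this replaces the series by $0\subset G_{\pi_1}\subset G_{\pi_1}\times G_{\pi_2}$, giving $V_1=0$ and $V_2=V$.
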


\begin{proof} 
Choose a composition series $\gF$ for $V$ with the minimal number of constituents isomorphic to $\Mu_2$, if any.  If there is an obstacle to moving $\Mu_2$ to the left, then there is a subquotient $X =  H_{i+2}/H_i$ in the filtration of Definition \ref{Fils} such that
\begin{equation} \label{IsoX}
0 \to G_{\rho} \to X \to \Mu_2 \to 0 
\end{equation}
does not split.

According to the dual of Corollary \ref{ExtDimCor}(ii), $X$ is isomorphic to $G_{\rho_1} \times G_{\rho_2}$ for some factorization $\rho_1 \rho_2 = 2 \rho$ in $\cO$.  Since \eqref{IsoX} does not split, neither $G_{\rho_1}$ nor $G_{\rho_2}$ is isomorphic to $\Mu_2$. If the original composition series is modified to reflect this product, then there are fewer constituents isomorphic to $\Mu_2$ and so, a contradiction.  Thus, there is a closed subgroup scheme $V_1$ of $V$ satisfying item (i), with no constituent of the induced filtration for $W = V/V_1$ isomorphic to $\Mu_2$.

Similar reasoning applies to $W$.  Among the composition series for $W$ with no constituent isomorphic to $\Mu_2$, choose one with the minimal number of constituents that are isomorphic to $\Z/2\Z$.  If $\Z/2\Z$ cannot be moved to the right, past $G_\rho$ with $\rho \not\sim 1,2$, then filtration has a subquotient $X'$ that is a non-split extension 
\begin{equation} \label{IsoX'}
0 \to \Z/2\Z \to X' \to G_{\rho} \to 0.
\end{equation}
By Corollary \ref{ExtDimCor}(i), $X' \simeq G_{\rho_1} \times G_{\rho_2}$ for some $\rho_1 \rho_2 = \rho$.  Neither factor is isomorphic to $\Z/2\Z$, since \eqref{IsoX'} does not split.   Also, neither factor is isomorphic to $\Mu_2$, as that would force $G_\rho \simeq \Mu_2$.  Hence there is a new composition series for $W$ with no constituent isomorphic to $\Mu_2$ and fewer constituents isomorphic to $\Z/2\Z$.  This contradiction implies that there is a filtration for $W$ with grading $[W_2,W/W_2]$ such that neither $\Mu_2$ nor $\Z_2$ is a constituent of the induced filtration on $W_2$.  Also, every constituent of the induced filtration on $W/W_2$ is isomorphic to $\Z/2\Z$.  

Then we can find a closed subgroup scheme $V_2$ of $V$ containing $V_1,$ such that $V/V_2 \simeq W/W_2$ satisfies (ii) and $V_2/V_1 \simeq W_2$ satisfies (iii). 
\end{proof}

Next, we further refine the filtration in Proposition \ref{MoveZMu} when $2 = \pi_1 \pi_2 \pi_3$ is the product of 3 primes in $\cO$.  Our rearrangement of simple group scheme subquotients is based on Table \ref{Split}.

\begin{table}[h]\begin{caption}{Rearranging constituents}  \label{Split} \end{caption}
\renewcommand{\arraystretch}{1.2}
$$
\begin{array}{|c | c | c | c | c | c |}\hline
\text{Item} & G_\rho & G_{\rho'} & P' \text{ ram} & S \text{ split} & \text{Comments} \\ 
\hline
\text{1}&G_{\pi_1\pi_2}&G_{\pi_1\pi_3}&\pi_3&\{\pi_2\}& \text{QE}_2 \\
\text{2}&G_{\pi_1\pi_2}&G_{\pi_1}&1&\{\pi_2\}&h_K^{+}=1\\
\text{3}&G_{\pi_1\pi_2}&G_{\pi_2\pi_3}&\pi_3&\{\pi_1\}& \text{QE}_1 \\
\text{4}&G_{\pi_1\pi_2}&G_{\pi_2}&1&\{\pi_1\}&h_K^{+}=1\\
\text{5}&G_{\pi_1\pi_2}&G_{\pi_3}&\pi_3&\{\pi_1,\pi_2\}& \text{Prop. \ref{AltNonSplit}}\\ \hline
\text{6}&G_{\pi_2}&G_{\pi_3}&\pi_3&\{\pi_2\}& \text{QE}_2 \\ 
\text{7}&G_{\pi_2\pi_3}&G_{\pi_3}&1&\{\pi_2\}&h_K^{+}=1\\
\text{8}&G_{\pi_1}&G_{\pi_3}&\pi_3&\{\pi_1\}& \text{QE}_1 \\
\text{9}&G_{\pi_1\pi_3}&G_{\pi_3}&1&\{\pi_1\}&h_K^{+}=1\\ \hline
\text{10}&G_{\pi_1\pi_3}&G_{\pi_1}&1&\{\pi_3\}&h_K^{+}=1\\
\text{11}&G_{\pi_1\pi_3}&G_{\pi_2\pi_3}&\pi_2&\{\pi_1\}& \text{QE}_1\\
\text{12}&G_{\pi_1\pi_3}&G_{\pi_2}&\pi_2&\{\pi_1,\pi_3\}&\text{Prop. \ref{AltNonSplit}} \\\hline
\text{13}&G_{\pi_2\pi_3}&G_{\pi_2}&1&\{\pi_3\}&h_K^{+}=1\\
\text{14}&G_{\pi_1}&G_{\pi_2}&\pi_2&\{\pi_1\}&\text{QE}_1  \\ \hline
\text{15}&G_{\pi_1}&G_{\pi_2\pi_3}&\pi_2\pi_3&\{\pi_1\}& \text{QE}_1 \\ \hline
\end{array}
$$ 
\renewcommand{\arraystretch}{1}
\end{table}

\vspace{5 pt}

\noindent {\bf About Table \ref{Split}}.   Let $2 = \pi_1 \pi_2 \pi_3$ in $\cO$ and write $\gp_i = \pi_i \cO$ for $i=1,2,3$.  Refer to Proposition \ref{ExtDim} and the notation there.   To control $\Ext^1_\cO(G_\rho, G_{\rho'})$, we impose the following {\em quadratic extension hypotheses} on $K$.
\begin{equation} \label{QE}
\begin{array}{l l}
{\bfQE}_1\!: &\text{There is no quadratic extension of } K \text{ split}\\  
                    &\text{over } \pi_1, \text{with conductor dividing } \infty (\pi_2 \pi_3)^2. \\[2 pt]
{\bfQE}_2\!: &\text{There is no quadratic extension of } K \text{ split}\\   
                    &\text{over } \pi_2, \text{with conductor dividing } \infty \pi_3^2. 
\end{array}
\end{equation}

Note that ${\bf QE}_1$ also implies that there is no quadratic extension of $K$ split over $\pi_1$ with conductor dividing either $\infty \pi_2^2$ or $\infty \pi_3^2$.
\begin{enumerate}[\, $\bullet$]
\item Given ${\bf QE}_1$ and {\bf QE}$_2$, Proposition \ref{ExtDim} implies that $\Ext^1_\cO(G_\rho,G_{\rho'}) = 0$ in Table \ref{Split}, excluding items 5 and 12.  Alternatively, in the unramified case, $P' = 1$ and so $\Ext^1_\cO(G_\rho,G_{\rho'}) = 0$ because $h_K^+ = 1$.  
\vspace{2 pt}

\item For items 5 and 12, Proposition \ref{AltNonSplit} applies.  Thus, there is a unique non-trivial class $[G]$ in $\Ext^1_\cO(G_\rho,G_{\rho'})$ and $G$ also is a non-trivial extension of $G_{\pi_j \pi_3}$ by $G_{\pi_k}$ when $\{j,k\}=\{1,2\}$.
\end{enumerate}

\medskip

\begin{prop}  \label{sorter} 
Let $2 = \pi_1 \pi_2 \pi_3$ be the product of three primes in $\cO$ and assume both  ${\bf QE}_1$ and ${\bf QE}_2$.  If $V$ is a prosaic group scheme over $\cO$, then there is a filtration
$$
0 \subseteq V_1 \subseteq V_2 \subseteq V_3 \subseteq V_4 \subseteq V_5 \subseteq V_6 \subseteq V_7 \subseteq V,
$$
with quotients filtered entirely by copies of $G_\rho$, according to the following table:
\renewcommand \arraystretch{1.2}
$$
\begin{array} {| c || c | c | c | c | c | c | c | c |}
\hline
 & V_1 & V_2/V_1 & V_3/V_2 & V_4/V_3 & V_5/V_4 & V_6/V_5 &V_7/V_6 & V/V_7 \\
\hline
G_ \rho & \Mu_2 & G_{\pi_1 \pi_2} &  G_{\pi_1 \pi_3} & G_{\pi_1} & G_{\pi_2\pi_3} & G_{\pi_2 }  & G_{\pi_3} & \Z/2\Z \\
\hline
\end{array}
$$
\renewcommand \arraystretch{1}
\end{prop}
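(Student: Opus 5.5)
Start from Proposition \ref{MoveZMu}, which gives $0 \subseteq V_1 \subseteq V_2' \subseteq V$ with $V_1$ filtered by $\Mu_2$, $V/V_2'$ filtered by $\Z/2\Z$, and no constituent of $W = V_2'/V_1$ isomorphic to $\Mu_2$ or $\Z/2\Z$. Since $2 = \pi_1\pi_2\pi_3$, every constituent of $W$ is one of the six group schemes $G_{\pi_1\pi_2}, G_{\pi_1\pi_3}, G_{\pi_1}, G_{\pi_2\pi_3}, G_{\pi_2}, G_{\pi_3}$. It therefore suffices to sort a composition series of $W$ into blocks in this order, and then take preimages in $V$ to get $V_2,\dots,V_6$ and $V_7 = V_2'$.

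The sorting is a bubble sort using Definition \ref{Fils}. Assign ranks $1,\dots,6$ to the six schemes in the order of the table. Among composition series of $W$, choose one that minimizes the number of inversions, meaning pairs of positions where a higher-ranked constituent lies below a lower-ranked one. If an inversion exists, some adjacent pair is inverted: $H_{i+1}/H_i \simeq G_{\rho'}$ and $H_{i+2}/H_{i+1}\simeq G_\rho$ with $G_\rho$ of lower rank. The key check is that the fifteen such adjacent pairs are exactly the fifteen items of Table \ref{Split}, with $G_\rho$ the scheme that should move left. By the discussion after the table, ${\bf QE}_1$, ${\bf QE}_2$, Proposition \ref{ExtDim} and $h_K^+=1$ give $\Ext^1_\cO(G_\rho,G_{\rho'})=0$ for every item except 5 and 12. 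So $X = H_{i+2}/H_i$ splits, and swapping lowers the inversion count, a contradiction.

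The main obstacle is items 5 and 12, where the extension may be the unique non-split $G$ of Proposition \ref{AltNonSplit}. There I would not swap but refactor $X$. In item 5, $X$ is a non-split extension of $G_{\pi_1\pi_2}$ by $G_{\pi_3}$, and Proposition \ref{AltNonSplit} shows $X$ also has a closed subgroup $G_{\pi_1}$ with quotient $G_{\pi_2\pi_3}$ (and a subgroup $G_{\pi_2}$ with quotient $G_{\pi_1\pi_3}$). Replacing the pair $[G_{\pi_3}, G_{\pi_1\pi_2}]$, of ranks $(6,1)$, by $[G_{\pi_1}, G_{\pi_2\pi_3}]$, of ranks $(3,4)$, changes the multiset of constituents. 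So plain inversion count is not obviously monotone, and I would use a finer potential. First minimize the number of constituents $G_{\pi_3}$ (top rank), then the inversion count. The replacement in item 5 strictly lowers the count of $G_{\pi_3}$, so minimality rules it out. Item 12 is the pair $[G_{\pi_2}, G_{\pi_1\pi_3}]$, a non-split extension of $G_{\pi_1\pi_3}$ by $G_{\pi_2}$, which by Proposition \ref{AltNonSplit} has a subgroup $G_{\pi_1}$ with quotient $G_{\pi_2\pi_3}$. This replaces ranks $(5,2)$ by $(3,4)$. To exclude it, add an earlier criterion to the lexicographic order, say minimizing the number of constituents of rank $5$ or $6$, i.e.\ $G_{\pi_2}$ and $G_{\pi_3}$ together. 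Both refactorings lower that count, so they are also excluded by minimality.

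With the lexicographic minimum (first the count of $G_{\pi_2}$ and $G_{\pi_3}$, then the number of inversions), every adjacent inversion leads to a contradiction, since swaps leave the multiset and hence the first criterion unchanged. So the chosen series of $W$ is sorted, which yields the filtration. One should also double-check that no adjacent inverted pair escapes Table \ref{Split}; the table lists exactly $\binom{6}{2}=15$ rows, consistent with this.
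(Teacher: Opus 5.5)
Your argument is correct. It uses the same ingredients as the paper (Proposition \ref{MoveZMu}, the rows of Table \ref{Split}, the moves of Definition \ref{Fils}, and the refactoring from Proposition \ref{AltNonSplit}), but it organizes the sorting differently.

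The paper works in stages, each governed by a simple count of the one type being moved:
\begin{enumerate}
\item It pushes every $G_{\pi_1\pi_2}$ to the left using rows 1--5. A non-split row 5 is refactored as $[G_{\pi_2},G_{\pi_1\pi_3}]$, which lowers the number of $G_{\pi_1\pi_2}$'s.
\item It pushes every $G_{\pi_3}$ to the right using rows 6--9, which are pure swaps.
\item It moves every $G_{\pi_1\pi_3}$ into place using rows 10--12. A non-split row 12 is refactored as $[G_{\pi_1},G_{\pi_2\pi_3}]$.
\item It pushes every $G_{\pi_2}$ to the right using rows 13--14.
\item It finishes with row 15.
\end{enumerate}
The order of the table is chosen so that no refactoring reintroduces a type already placed.

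You instead run one global bubble sort with a single lexicographic potential. What makes this work is your observation that both non-split cases can be refactored into $[G_{\pi_1},G_{\pi_2\pi_3}]$, which strictly lowers the number of constituents $G_{\pi_2}$, $G_{\pi_3}$. The paper's choice $[G_{\pi_2},G_{\pi_1\pi_3}]$ for row 5 would keep that count fixed and can increase inversions, so it is not compatible with your potential.

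Your version makes termination explicit in one stroke, at the cost of checking that the fifteen rows are exactly the fifteen inverted pairs; that check is correct. The paper's version makes clearer which vanishing statement is used at each step.

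One point you should state explicitly. The minimization must range only over composition series of $W$ whose constituents lie among the six schemes, because other composition series of $W$ can contain $\Mu_2$ or $\Z/2\Z$. For instance, a subquotient $G_{\pi_1}\times G_{\pi_2\pi_3}$ is an extension of $\Mu_2$ by $\Z/2\Z$ by Corollary \ref{ExtDimCor}(ii). This restricted class is nonempty by Proposition \ref{MoveZMu} and stable under your swaps and both refactorings, so your argument goes through. The paper makes the same restriction when it notes that its changes introduce no new $\Mu_2$ or $\Z/2\Z$.
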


\begin{proof}
Proposition \ref{MoveZMu} gives the construction of $V_1$ and $V_7$, with no constituent of $W_1 = V_7/V_1$ isomorphic to $\Mu_2$ or $\Z/2\Z$.  Suppose that $G_{\pi_1 \pi_2}$ is a constituent of $W_1$.  In the first 4 lines of Table \ref{Split}, $\Ext^1_\cO(G_{\pi_1 \pi_2},G_{\rho'}) = 0$, so we can move $G_{\pi_1 \pi_2}$ to the left, past each such $G_{\rho'}$.  

According to item 5 and Proposition \ref{AltNonSplit}, we might encounter a non-trivial extension $G$ of $G_{\pi_1 \pi_2}$ by $G_{\pi_3}$.  In that case, $G$ also is an extension of $G_{\pi_1\pi_3}$ by $G_{\pi_2}$.  This change of filtration clearly does not introduce new constituents isomorphic to $\Mu_2$, $\Z/2\Z$, $G_{\pi_1\pi_2}$ or $G_{\pi_3}$.  As a result, we can create a filtration $0 \subseteq W_1' \subseteq W_1$, where $W_1'$ is filtered entirely by copies of $G_{\pi_1\pi_2}$ and $W_1/W_1'$ has no constituent isomorphic to $\Mu_2$, $\Z/2\Z$ or $G_{\pi_1\pi_2}$.  Then we can find a closed subgroup scheme $V_2$ of $V_7$ containing $V_1$, such that $V_2/V_1 \simeq W_1'$ and $V_7/V_2 \simeq W_1/W_1'$. 

Thus the constituents of $W_2 = V_7/V_2$ belong to the set of simple group schemes 
$$
\{G_{\pi_3},G_{\pi_2}, G_{\pi_2 \pi_3}, G_{\pi_1},  G_{\pi_1\pi_3} \}.
$$
We wish to move all copies of $G_{\pi_3}$ to the right in $W_2$.  This task is dual to the procedure outlined above and uses only $\Ext^1_\cO(G_\rho,G_{\pi_3}) = 0$ for $G_\rho$ in items 6 through 9 of the table.  As a result, we obtain a closed subgroup scheme $V_6$ of $V_7$ containing $V_2$, such that $V_7/V_6$ is filtered entirely by copies of $G_{\pi_3}$ and the constituents of $W_3 = V_6/V_2$ belong to the set 
$$
 \{G_{\pi_1\pi_3}, G_{\pi_1}, G_{\pi_2 \pi_3}, G_{\pi_2} \}.
$$

To move all copies of $G_{\pi_1\pi_3}$ to the right in $W_3$, we first use the vanishing of $\Ext^1_\cO(G_{\pi_1 \pi_3},G_{\rho'})$ given by items 10 and 11 of the table.  However, in item 12, we might encounter a non-trivial extension $G$ of $G_{\pi_1 \pi_3}$ by $G_{\pi_2}$.  In that case, $G$ also is an extension of $G_{\pi_2 \pi_3}$ by $G_{\pi_1}$, as in Proposition \ref{AltNonSplit}.  With these modifications, we obtain a closed subgroup scheme $V_3$ of $V_6$ containing $V_2$, such that $V_3/V_2$ is filtered entirely by copies of $G_{\pi_1 \pi_3}$ and the constituents of $W_4 = V_6/V_3$ belong to the set 
$$
 \{G_{\pi_2},G_{\pi_2 \pi_3},G_{\pi_1}  \}.
$$

The justification for moving all copies of $G_{\pi_2}$ to the right in $W_4$ is dual to the previous paragraph, using $\Ext^1_\cO(G_\rho,G_{\pi_2}) = 0$ for $G_\rho$ in items 13 and 14 of the table.  Thus, we can find a closed subgroup scheme $V_5$ of $V_6$, containing $V_3$ such that $V_6/V_5$ is filtered entirely by copies of $G_{\pi_2}$ and the constituents of $V_5/V_3$ belong to $\{ G_{\pi_1}, G_{\pi_2 \pi_3} \}$.

To complete the proof, use $\Ext^1_\cO(G_{\pi_1},G_{\pi_2 \pi_3}) = 0$, as given by item 15.
\end{proof}

\begin{prop}  \label{bd}
Suppose that $2 = \rho \bar{\rho}$ in $\cO$, with $\rho \not\sim 1,2$.  Let $V$ be a group scheme over $\cO$ filtered entirely by copies of $G_\rho$ or entirely by copies of $G_{\bar{\rho}}$.  If the maximal abelian $2$-extension $M/K$ unramified outside $\rho$ and $\infty$ is finite, then the exponent of $V$ divides $2 \, \fdeg{M}{K}$.
\end{prop}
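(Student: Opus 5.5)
The plan follows Schoof's approach in \cite[Prop. 4.2]{Sch3}. By Cartier duality it suffices to treat $V$ filtered by copies of $G_\rho$: the dual $V^\wedge$ is filtered by copies of $G_{\bar\rho}$ and has the same exponent. The only point to watch is that the field $M$ in the statement refers to $\rho$. I will therefore phrase the argument so that it uses only the field of points of the Galois module $V(\ov K)$, and I expect it to go through symmetrically for $\bar\rho$ with the roles of $\rho,\bar\rho$ swapped.

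First I locate the points of $V$. Every constituent $G_\rho$ has its points in $K$ with trivial Galois action, so $\Gal(K(V)/K)$ acts unipotently on $V(\ov K)$ and $K(V)/K$ is a $2$-extension. It is unramified at odd places.

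Next I control ramification over $2$. At a prime $\pi \nmid \rho$, each constituent is $\Z/2\Z$ over $\cO_\pi$ by \eqref{SimpleLocal}. Hence $V\otimes\cO_\pi$ is étale, and $K(V)/K$ is unramified at $\pi$. So $K(V)$ is a $2$-extension unramified outside $\rho\infty$.

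Now let $L$ be the maximal abelian subextension of $K(V)/K$. It lies in $M$, so $\fdeg{L}{K}$ divides $\fdeg{M}{K}$. I want the stronger statement that $K(V)\subseteq M$, i.e.\ that $\Gal(K(V)/K)$ is abelian. The cleanest route is to use the ray class group of modulus $\rho^k\infty$ directly. Note that the maximal abelian $2$-extension unramified outside $\rho\infty$ being finite forces the maximal pro-$2$ extension unramified outside $\rho\infty$ to be finite, with abelianization $\Gal(M/K)$. If that pro-$2$ group is not abelian, I fall back to a counting argument instead of trying to prove abelianness.

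The core step is an induction on the length of the filtration, bounding the exponent of $V$ by $2$ times the exponent of a subgroup of $\Gal(M/K)$. For $P\in V(\ov K)$ and $\sigma$ in Galois, set $\tau_\sigma(P)=\sigma P-P$. Take a filtration $0\subset V'\subset V$ with $V/V'\simeq G_\rho$.

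$\bullet$ The step from $V'$ to $V$ is controlled by $\Ext^1_\cO(G_\rho,V')$, fitting into sequences of $\Ext^1_\cO(G_\rho,G_\rho)$.

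$\bullet$ In the proof of Proposition \ref{z7}, Case 1 shows that an extension of exponent $2$ at some $\pi$ splits. Case 2 shows that an exponent-$4$ extension yields a quadratic character of conductor $\rho^2$, hence a quadratic subfield of $M$.

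Following Schoof, I view $V$ as a module over $\Z_2[\Gal(M/K)]$ (after showing that the Galois action factors through $\Gal(M/K)$). The key claim is then $2x\in(\sigma-1)V$ for suitable $\sigma$: multiplication by $2$ on $V$ factors through the augmentation ideal of $\Z_2[\Gal(M/K)]$, via the local structure at $\pi\mid\rho$ where everything is $\Mu_2$-filtered. From that, the exponent divides $2\,\fdeg{M}{K}$ because $(\sigma-1)^{\fdeg{M}{K}}$ kills $V$ modulo $2$-torsion.

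The main obstacle is exactly this last step: showing that the Galois action on $V(\ov K)$ factors through an abelian group controlled by $M$, and relating multiplication by $2$ to $\sigma-1$. For this I would first try the argument of \cite[Prop. 4.2(iv)]{Sch3} verbatim, using that $V$ is connected at $\pi\mid\rho$ and étale at $\pi\nmid\rho$. The hope is that the resulting character of the filtration lands in $\Gal(M/K)$, and that the exponent grows by at most a factor of $2$ for each cyclic factor of order dividing $\fdeg{M}{K}$.
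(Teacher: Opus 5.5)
There is a genuine gap. The step you flag as ``the main obstacle'' is the entire content of the proposition, and the route you sketch for it does not work.

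You want the Galois action on $V(\ov K)$ to factor through $\Gal(M/K)$, but nothing forces $K(V)/K$ to be abelian. Your supporting claim is also false: finiteness of the maximal abelian $2$-extension unramified outside $\rho\infty$ does not make the maximal pro-$2$ extension with the same ramification finite. A pro-$2$ group can have finite abelianization and still be infinite, as with infinite $2$-class field towers.

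Proposition~\ref{z7} does not help either. It treats only extensions of length two, and its Case 2 ends in a contradiction only because it assumes there is no quadratic extension of conductor dividing $\infty\rho^2$. That hypothesis is not available here.

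Even granting an abelian action and an element $\sigma$ acting as the scalar $3$, your closing relation would only give $2^{\fdeg{M}{K}}V\subseteq V[2]$. That means the exponent divides $2^{\fdeg{M}{K}+1}$, not $2\fdeg{M}{K}$.

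Your local observation is the right seed. At $\pi\mid\rho$ the dual $V^\wedge$ is \'etale, so inertia acts on $V$ through the cyclotomic character by scalars, with image of order $2^{n-1}$ when $2^n$ is the exponent. The problem is that nothing in your sketch prevents part of this inertia from lying in the commutator subgroup of $\Gal(K(V)/K)$, where it is invisible in $M$.

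The missing idea, used in the paper, is to pair $V$ with its Cartier dual. With $V$ filtered by $G_\rho$, set $L=K(V)$ and $L'=K(V^\wedge)$.
\begin{enumerate}
\item[(a)] Only primes over $\rho$ ramify in $L$, and only primes over $\bar\rho$ ramify in $L'$. So $L\cap L'$ is unramified at all finite places, and it equals $K$ because $h_K^+=1$.
\item[(b)] Hence $\Gal(LL'/K)\simeq\Gal(L/K)\times\Gal(L'/K)$. The maximal abelian subfield of $LL'$ is $FF'$, where $F\subseteq L$ and $F'\subseteq L'$ are the maximal abelian subfields; in particular $F\subseteq M$.
\item[(c)] The perfect pairing $V\times V^\wedge\to\Mu_{2^n}$ puts $K(\Mu_{2^n})$ inside $LL'$, hence inside $FF'$.
\item[(d)] Since $2$ is unramified in $K$, a prime $\gp\mid\rho$ is totally ramified in $K(\Mu_{2^n})/K$, with inertia of order $2^{n-1}$.
\item[(e)] Since $F'/K$ is unramified at $\gp$, restriction maps the inertia group of $FF'/K$ at $\gp$ isomorphically onto $\cI_\gP(F/K)$.
\end{enumerate}
Thus $2^{n-1}$ divides $\vv{\cI_\gP(F/K)}$, which divides $\fdeg{F}{K}$ and hence $\fdeg{M}{K}$. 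This argument never needs $K(V)/K$ to be abelian, and step (d) is exactly where the unramifiedness of $2$ in $K$ enters.
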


\begin{proof}
This approach is suggested by the proof in \cite[Prop. 8.3]{Sch7a}.  By hypothesis, the primes over 2 in $K/\Q$ are unramified, so $\gcd(\rho,\bar{\rho}) = 1$.  Since Galois acts trivially on $G_\rho$, the field of points $L = K(V)$ is a 2-extension of $K$, as is $L' = K(V^\wedge)$.  Furthermore, $LL' = K(V \oplus V^\wedge)$. 

An extension of an \'{e}tale group scheme by an \'{e}tale group scheme at a prime $\gq$ is \'{e}tale.  It follows that among finite primes, only those dividing $\rho$ might ramify in $L/K$ and only those dividing $\bar{\rho}$ might ramify in $L'/K$.  Hence $(L \cap L')/K$ is unramified at all finite places.  Since $h_K^+ = 1$, we have $L \cap L' = K$ and  
$$
\Gal(LL'/K) \simeq \Gal(L/K) \times \Gal(L'/K).
$$
Let $F$ and $F'$ respectively, be the maximal subfields of $L$ and $L'$ abelian over $K$.  Then the maximal subfield of $LL'$ abelian over $K$ is $FF'$ and
$$
\Gal(FF'/K) \simeq \Gal(F/K) \times \Gal(F'/K).
$$
Let common exponent of $V$, $V^\wedge$ and $V \oplus V^\wedge$ be $2^n$.  

Thanks to the Weil pairing, $K(\Mu_{2^n})$ is contained in $FF'$.  The primes over 2 are unramified in $K/\Q$, so each of them is totally ramified in $K(\Mu_{2^n})/K$.  Thus $K \cap \Q(\Mu_{2^n}) = \Q$ and
$$
\Gal(K(\Mu_{2^n})/K) \xrightarrow{\sim} \Gal(\Q(\Mu_{2^n})/\Q).
$$ 
Let $\gP'$ be a prime of $FF'$ over $\gP$ in $F$ and let $\gP$ lie over $\gp$ in $K$.  If $\gp$ is a prime divisor of $\rho \cO$, the inertia group $\cI_{\gP'}(FF'/K)$ therefore is a multiple of $2^{n-1}$.  Since $F'/K$ is unramified over $\gp$, the following restriction map is an isomorphism:
$$
\cI_{\gP'}(FF'/K) \xrightarrow{\sim} \cI_\gP(F/K).
$$
Hence $\vv{\cI_\gP(F/K)}$ is a multiple of $2^{n-1}$.  But $F$ is a subfield of $M$, so the exponent $2^n$ of $V$ divides $2\fdeg{M}{K}$.

Now consider a group scheme $W$ filtered by copies of $G_{\bar{\rho}}$.  Then $V = W^\wedge$ is filtered by copies of $G_\rho$.  We have shown that the exponent of $V$ divides $2\fdeg{M}{K}$.  But $W$ has the same exponent.
\end{proof}

\begin{cor} \label{bdCor}
Let $\gp$ be a prime divisor of $2\cO$ and let $\gc = 2 \gp^{-1}$.  Assume that the maximal abelian $2$-extension $M/K$ ramified only over $\gc$ and $\infty$ is finite. Let $2 = \rho \bar{\rho}$ in $\cO$, with $\rho \not \sim 1,2$.  If $V$ is filtered entirely by copies of $G_\rho$ or copies of $G_{\bar{\rho}}$, then the exponent of $V$ divides $2 \,\fdeg{M}{K}$.
\end{cor}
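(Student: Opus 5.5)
Corollary \ref{bdCor} will follow from Proposition \ref{bd} once we see that $\rho$ or $\bar{\rho}$ divides $\gc$; the only thing to decide is which of the two plays the role of $\rho$ there.

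\textbf{Step 1: reduce to $\rho \vert \gc$.} Since $2 = \rho\bar{\rho}$ with $2$ unramified, $\rho$ and $\bar{\rho}$ are coprime squarefree divisors of $2$ and partition the primes above $2$. The prime $\gp$ divides exactly one of them, so the other divides $\gc = 2\gp^{-1}$. The hypothesis and the conclusion of the statement are symmetric under $\rho \leftrightarrow \bar{\rho}$: $V$ is filtered by copies of $G_\rho$ or of $G_{\bar{\rho}}$, and the conditions $\rho \not\sim 1,2$ and $\bar{\rho}\not\sim 1,2$ are equivalent. So after relabeling we may assume $\rho \vert \gc$.

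\textbf{Step 2: compare the two fields called $M$.} Let $M_\rho$ be the maximal abelian $2$-extension of $K$ unramified outside $\rho$ and $\infty$. Every prime dividing $\rho$ divides $\gc$. Hence $M_\rho$ is contained in the field $M$ of the corollary, which is ramified only over $\gc$ and $\infty$. Since $M$ is finite by assumption, $M_\rho$ is finite and $\fdeg{M_\rho}{K}$ divides $\fdeg{M}{K}$.

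\textbf{Step 3: apply Proposition \ref{bd}.} Its hypotheses now hold: $2=\rho\bar{\rho}$ with $\rho\not\sim 1,2$, $V$ is filtered by copies of $G_\rho$ or of $G_{\bar{\rho}}$, and $M_\rho$ is finite. It gives that the exponent of $V$ divides $2\fdeg{M_\rho}{K}$. By Step 2 this divides $2\fdeg{M}{K}$, which proves the corollary.

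The only point needing care is the divisibility $\fdeg{M_\rho}{K} \mid \fdeg{M}{K}$, which is just the degree of a subfield dividing the degree of the field. No other step is an obstacle.
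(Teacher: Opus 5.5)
Your proposal is correct and follows the paper's proof. The paper likewise notes that one of $\rho,\bar{\rho}$ divides $\gc$, observes that the maximal abelian $2$-extension unramified outside that divisor and $\infty$ is a subfield of $M$ and hence finite, and then applies Proposition \ref{bd}; you additionally spell out the divisibility $\fdeg{M_\rho}{K} \mid \fdeg{M}{K}$, which the paper uses tacitly.
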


\begin{proof}
Suppose first that $\rho$ is in $\gc$.  Since the maximal abelian 2-extension of $K$ unramified outside $\rho$ and $\infty$ is a subfield of $M$, it too is a finite extension of $K$.  Thus, the exponent of $V$ divides $2 \, \fdeg{M}{K}$ by Proposition \ref{bd}.  If $\rho$ is not in $\gc$, then $\bar{\rho}$ is in $\gc$ and the same conclusion holds.
\end{proof}

\section{Some number theory} \label{ClassField}

Let $F$ be a number field with trivial narrow class number $h_F^+ = 1$.   For the start of this section only, let $\cO = \cO_F$ denotes the ring of integers of $F$ rather than $K$, allowing ramification at primes over 2 in $F/\Q$.  Then we return to the ground field $K$ satisfying Hypothesis $\bfK$, to draw some consequences of \S \ref{Font} and \S \ref{GroupTheory} for the maximal solvable extension of $K$ in the Abrashkin-Fontaine extension $\cF/K$.

Fix a {\em square-free} divisor $\rho \not \sim$ of $2$ in $\cO$ and let $F^{(\rho)}$ be the maximal abelian 2-extension of $F$ unramified outside the primes over $\rho$ and the archimedean places.  For $n \ge 1$, let $F_n$ be subfield of $F$ of conductor $\infty \rho^n$, so that $F^{(\rho)} = \bigcup F_n$.  We use the class field theoretic description of $\Gal(F^{(\rho)}/F)$ to obtain a test for finiteness of $\fdeg{F^{(\rho)}}{F}$ suitable for the computations in \S \ref{HowDone}.

Denote the completion of $F$ at the prime $\pi$ over 2 by $F_\pi$, write $d_\pi = \fdeg{F_\pi}{\Q_2}$ for the local degree and let $e_\pi$ be the ramification index of $F_\pi/\Q_2$.  Let $U_\pi$ be the group of units in the ring of integers $\cO_\pi$ of $F_\pi$.  For $n \ge 1$, set
$$
U_\pi^{(n)} =1+ \pi^n \cO_\pi \quad \text{and} \quad U_\rho^{(n)} = \prod_{\pi \vert \rho} U_\pi^{(n)}.
$$

Let $C = \cO^\times_{pos}$ denote the group of global units of $F$ that are positive at all real places and let $i\!: \,C \to U_\rho = \prod_{\pi \vert \rho} U_\pi$ be the diagonal map.  For $n \ge 1$, let 
$$
C_n = \{ u \text{ in } \cO^\times_{pos} \, \mid \, u \equiv 1 \hspace{3 pt} (\bmod{\,} \rho^n) \, \}
$$
and let $\ov{i(C_1)}$ be the closure of $i(C_1)$ in $U_\rho^{(1)}$.

\begin{prop} \label{NoThyProp}
There is an isomorphism $\Gal(F^{(\rho)}/F) \simeq U_\rho^{(1)}/\ov{i(C_1)}$ and
\begin{equation}  \label{TowerRank}
\rk_{\Z_2} \Gal(F^{(\rho)}/F) = \sum_{\pi \vert \rho} d_\pi - \rk_{\Z_2} \ov{i(C_1)}.
\end{equation}
Let $e = \max \{e_\pi \, : \,  \pi \vert \rho\}$.  The extension $F^{(\rho)}/F$ is finite if and only if there is some $n > e $ with $F_{n+e} = F_n$. 
\end{prop}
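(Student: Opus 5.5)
The plan is to compute $\Gal(F_n/F)$ by class field theory at each finite level, pass to the inverse limit, and then use the squaring map on higher unit groups to turn the stabilization condition into finiteness.

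\textbf{Step 1: finite levels.} Since $h_F^+ = 1$, the ray class group of modulus $\infty\rho^n$ is the quotient of $(\cO/\rho^n)^\times \simeq \prod_{\pi\vert\rho} U_\pi/U_\pi^{(n)}$ by the image of the totally positive units $C$. So $\Gal(F_n/F)$ is the maximal $2$-quotient of
$$
\Bigl(\prod_{\pi \vert \rho} U_\pi/U_\pi^{(n)}\Bigr)\Big/ i(C).
$$
Each $U_\pi/U_\pi^{(1)}$ is the unit group of a residue field of characteristic $2$, so it has odd order. Hence $U_\rho/U_\rho^{(n)}$ is the direct product of an odd-order group with the $2$-group $U_\rho^{(1)}/U_\rho^{(n)}$.

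\textbf{Step 2: the $2$-part of $i(C)$.} The image of $i(C)$ in the $2$-part is generated by the projections of the $i(u)$. If $m$ is the (odd) exponent of $\prod_\pi U_\pi/U_\pi^{(1)}$, the projection of $i(u)$ is an odd root, inside the $2$-group, of $i(u)^m$. Since $u^m \in C_1$ and raising to an odd power is bijective on a $2$-group, the $2$-part of the image is exactly the image of $i(C_1)$. Therefore
$$
\Gal(F_n/F) \simeq U_\rho^{(1)}\big/\bigl(U_\rho^{(n)}\, i(C_1)\bigr).
$$

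\textbf{Step 3: inverse limit and rank.} Passing to the inverse limit over $n$, using compactness and $\bigcap_n U_\rho^{(n)}\,\ov{i(C_1)} = \ov{i(C_1)}$, gives
$$
\Gal(F^{(\rho)}/F) \simeq U_\rho^{(1)}/\ov{i(C_1)}.
$$
Each $U_\pi^{(1)}$ is a finitely generated $\Z_2$-module of rank $d_\pi$ (via the $2$-adic logarithm), so \eqref{TowerRank} follows by additivity of $\Z_2$-rank.

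\textbf{Step 4: finiteness criterion, the easy direction.} If $F^{(\rho)}/F$ is finite, then the increasing tower $F_n$ stabilizes. In particular $F_{n+e} = F_n$ for all large $n$, and some such $n$ exceeds $e$.

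\textbf{Step 5: the converse, which is the main step.} Write $D = \ov{i(C_1)}$. The condition $F_{n+e} = F_n$ translates, via Step 2, into
$$
U_\rho^{(n)} \subseteq U_\rho^{(n+e)} D.
$$
The tool is the squaring map. For $n > e_\pi$ the identity $(1+x)^2 = 1 + 2x + x^2$, with $v_\pi(2x) = e_\pi + n < 2n \le v_\pi(x^2)$, together with successive approximation, shows that squaring maps $U_\pi^{(n)}$ onto $U_\pi^{(n+e_\pi)}$. Since $e_\pi \le e$, this gives two inclusions:
$$
(U_\rho^{(n)})^2 \supseteq U_\rho^{(n+e)} \quad\text{and}\quad (U_\rho^{(n+e)})^2 \subseteq U_\rho^{(n+2e)}.
$$
Square the inclusion $U_\rho^{(n)} \subseteq U_\rho^{(n+e)} D$ and use that $D$ is a group:
$$
U_\rho^{(n+e)} \subseteq (U_\rho^{(n)})^2 \subseteq (U_\rho^{(n+e)})^2 D \subseteq U_\rho^{(n+2e)} D.
$$
Iterating yields $U_\rho^{(n)} \subseteq U_\rho^{(n+ke)} D$ for every $k \ge 1$. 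Because $D$ is closed and $\bigcap_k U_\rho^{(n+ke)} = \{1\}$, it follows that $U_\rho^{(n)} \subseteq D$. Hence $\Gal(F^{(\rho)}/F)$ is a quotient of the finite group $U_\rho^{(1)}/U_\rho^{(n)}$, so $F^{(\rho)} = F_n$ is finite over $F$.

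The points needing care are the surjectivity of squaring when the $e_\pi$ differ among the primes $\pi \vert \rho$ (handled by using the uniform bound $e$ and the hypothesis $n > e$), and the step from "$U_\rho^{(n)} \subseteq U_\rho^{(n+ke)}D$ for all $k$" to "$U_\rho^{(n)} \subseteq D$", which uses the closedness of $D$ in the compact group $U_\rho^{(1)}$.
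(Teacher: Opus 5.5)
Steps 1--4 are correct and match the paper; the paper simply asserts the class field theory isomorphism, and your Step 2 supplies why the $2$-part of the image of $C$ is the image of $C_1$. The gap is in Step 5, exactly at the point you flagged.

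The inclusion $(U_\rho^{(n+e)})^2 \subseteq U_\rho^{(n+2e)}$ is false whenever some $\pi \mid \rho$ has $e_\pi < e$. At such a prime, $(U_\pi^{(n+e)})^2 = U_\pi^{(n+e+e_\pi)}$, which strictly contains $U_\pi^{(n+2e)}$. Your chain therefore only gives $U_\rho^{(n+e)} \subseteq \bigl(\prod_{\pi \mid \rho} U_\pi^{(n+e+e_\pi)}\bigr) D$. So the level-by-level induction $U_\rho^{(n)} \subseteq U_\rho^{(n+ke)} D$ is not established. The argument is fine when all $e_\pi$ are equal, for instance when $\rho$ is prime or $2$ is unramified as under Hypothesis~\textbf{K}; these are the cases used later. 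It does not cover the proposition as stated.

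The repair is to iterate powers of one fixed group instead of levels, which is how the paper proceeds. Put $X = U_\rho^{(n)}$.

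The inclusion you did prove, $U_\rho^{(n+e)} \subseteq X^2$, gives $X \subseteq U_\rho^{(n+e)} D \subseteq X^2 D$. Applying the squaring homomorphism and using that $D$ is a group yields $X^{2^k} \subseteq X^{2^{k+1}} D$. Hence $X \subseteq X^{2^k} D$ for every $k$.

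Since $X^{2^k} \subseteq U_\rho^{(n+k)}$ shrinks to $1$ and $D$ is closed in the compact group $U_\rho^{(1)}$, we get $\bigcap_k X^{2^k} D = D$. So $X \subseteq D$, and your final sentence goes through. Equivalently, $X = X^2 (X \cap D)$, and Nakayama applies to the finitely generated $\Z_2$-module $X$.

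The paper runs this same induction in the form $U_\rho^{(n)} = (U_\rho^{(n)})^{2^j}\, i(C_n)$. It writes $U_\rho^{(n+e)} = (U_\rho^{(n)})^2$, but only the inclusion $\subseteq$ is used. That is the direction that survives unequal ramification indices.
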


\begin{proof}
Let $k_\pi$ be the residue field at the place $\pi$ over $\rho$.  Then $k_\pi^\times$ has odd order, $U_\pi^{(1)}$ is a pro-2 group of rank $d_\pi$ and the exact sequence
$$
1 \to U_\pi^{(1)} \to U_\pi \to k_\pi^\times \to 1
$$
splits.  Since $h_F^+ = 1$, class field theory gives the isomorphism between $\Gal(F^{(\rho)}/F)$ and $U_\rho^{(1)}/\ov{i(C_1)}$ and \eqref{TowerRank} holds.  In addition, $\Gal(F_n/F) \simeq U_\rho^{(1)}/(U_\rho^{(n)} \, i(C_1))$.

Suppose that $F^{(\rho)}/F$ is finite and take $n$ sufficiently large that the finite part of the conductor of $F^{(\rho)}/F$ divides $\rho^n$.  Thus  $F^{(\rho)}$ is contained in $F_n$.  The reverse inclusion holds by definition of $F^{(\rho)}$ and so $F^{(\rho)} = F_n$.  Indeed, $F^{(\rho)} = F_m$ for all $m > n$.

For the converse, suppose that $m >  n$, so that $C_m$ is contained in $C_n$.   In the following exact diagram, we use the notation $i$ for various maps it induces:
\begin{equation*} \label{snake}
\begin{tikzcd}[column sep = small, row sep = small]
1 \arrow[r] & C_1/C_m \arrow["i",r] \arrow[d]&  U_\rho^{(1)}/U_\rho^{(m)}  \arrow[r] \arrow[d] &   \Gal(F_m/F)  \arrow{r} \arrow[d]  & 1\\
1 \arrow[r] & C_1/C_n  \arrow["i", r] & U_\rho^{(1)}/U_\rho^{(n)}  \arrow[r] & \Gal(F_n/F) \arrow[r] & 1
\end{tikzcd}
\end{equation*}
The vertical arrows are surjective and their kernels give the exact sequence
$$
1 \to C_n/C_m \xrightarrow{i} U_\rho^{(n)}/U_\rho^{(m)} \to \Gal(F_m/F_n) \to 1.
$$

Assume that $F_{n+e} = F_n$ for $n > e$ and set $m=n+e$.  Then $U_\rho^{(n)} = U_\rho^{(m)} \, i(C_n)$.   But $U_\rho^{(m)} = (U_\rho^{(n)})^2$, since the same holds for each prime $\pi$ dividing $\rho$ by the binomial theorem.  It follows by induction that $U_\rho^{(n)} = (U_\rho^{(n)})^{2^j} \, i(C_n)$ for all $j \ge 1$ and so $U_\rho^{(n)}$ is contained in the closure $\ov{i(C_n)}$.  Since $U_\rho^{(n)}$ has finite index in $U_\rho^{(1)}$, we conclude from the class field theory isomorphism in the Theorem that $F^{(\rho)}/F$ is a finite extension.
\end{proof}

\vspace{5 pt}

We now return to the ground field $K$ satisfying Hypothesis \bfK.  Refer to Notation \ref{cFL1L2} for the Abrashkin-Fontaine extension $\cF/K$ and its subfields $L_j$.  See Notation \ref{Kp} for the field $K^\gp$.  For quadratic extensions $M/K$, let $P_M$ be the product of the distinct prime divisors of $2 \cO_M$ and let $R_M$ be the ray class group over $M$ of conductor $\infty P_M^2$. 

\begin{prop}  \label{DpTest1}
Assume that $\Gal(L_1/K)$ is a $2$-group and that $R_M$ is a $2$-group for each quadratic extension $M$ of $K$ in $L_1$.  Then $\Gamma = \Gal(L_2/K)$ is a $2$-group.  \begin{enumerate}[{\rm i)}]
\item If $K^\gp = K$ for some prime divisor $\gp$ of $2\cO$, then $L_1$ is the maximal solvable extension of $K$ in $\cF$.  \vspace{2 pt}

\item If $\fdeg{K^\gp}{K} = 2$ for some prime divisor $\gp$ of $2\cO$ and $\gp$ is inert in $K^\gp/K$, then $L_2$ is the maximal $2$-extension of $K$ in $\cF$.  Moreover, $L_2$ is the ray class field of conductor $\infty P^2$ over $K^\gp$, where $P$ is the product of the distinct primes over $2$ in $K^{\gp}$.  
\end{enumerate}
\end{prop}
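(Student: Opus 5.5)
The plan is to first prove that $\Gamma=\Gal(L_2/K)$ is a $2$-group, and then read off (i) and (ii) from Propositions \ref{D1Type} and \ref{D2Type}.

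\emph{$\Gamma$ is a $2$-group.} Since $\Gal(L_1/K)$ is a $2$-group, $L_1$ is the maximal abelian $2$-extension of $K$ in $L_1$. By Proposition \ref{Elem2}, $\Gamma^{ab}=\Gal(L_1/K)$ is therefore an elementary $2$-group. Also $\Gamma'=\Gal(L_2/L_1)$ is abelian. Suppose an odd prime $p$ divides $\vv{\Gamma'}$. By Proposition \ref{OddDihedral}, $\Gamma$ has a quotient $D_p$, corresponding to a dihedral extension $N/K$ inside $L_2\subseteq\cF$. Its quadratic subextension $M/K$ lies in $L_1$, and $N/M$ is cyclic of degree $p$. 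I will show that $N$ lies in the ray class field of $M$ of conductor $\infty P_M^2$. This contradicts the hypothesis that $R_M$ is a $2$-group, since $p$ would then divide $\vv{R_M}$.

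For the conductor, note first that $N/M$ is unramified at odd finite places, because $\cF/K$ is. At a prime $\gQ$ of $N$ over $2$, the bound \eqref{cFRamBd} gives $m_{N/K}(\gQ)\le 1$. I expect the main technical point to be passing from this to a bound on $m_{N/M}(\gQ)$. Proposition \ref{Tame}(ii) is stated only without tame ramification, so I will argue directly with ramification groups. Upper numbering passes to the quotient $\Gal(N/K)\to\Gal(M/K)$. On the subgroup $\Gal(N/M)$ the lower numbering is inherited, with $H_\alpha=H\cap G_\alpha$. Since $N/M$ has odd degree $p$, any ramification at $\gQ$ is tame, so $\gf(N/M)\le 1\le 2$. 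In every case the $\gQ$-part of the conductor of $N/M$ divides $P_M^2$, which gives the contradiction.

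\emph{Part (i).} If $K^\gp=K$, Proposition \ref{D1Type} shows that $\cF_2$ is the maximal $2$-extension of $K$ in $L_1$. Since $\Gal(L_1/K)$ is a $2$-group, this gives $\cF_2=L_1$. Now suppose the maximal solvable extension $\cF_s$ strictly contains $L_1$. Then $\Gal(\cF_s/L_1)$ is a nontrivial solvable group, so it has a nontrivial abelian quotient, and hence $L_2\supsetneq L_1$. But $L_2/K$ is a $2$-extension by the first step, so $L_2\subseteq\cF_2=L_1$, which is a contradiction. Hence $\cF_s=L_1$.

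\emph{Part (ii).} If $\fdeg{K^\gp}{K}=2$ with $\gp$ inert, Proposition \ref{D2Type}(ii) says $\cF_2$ is the maximal $2$-extension of $K$ in $L_2$. Since $\Gamma$ is a $2$-group, this gives $\cF_2=L_2$. By Proposition \ref{D2Type}(iii), $L_2$ is the maximal abelian $2$-extension of $K^\gp$ with modulus $\infty P^2$. Finally, $K^\gp$ is a quadratic extension of $K$ inside $L_1$, because its modulus divides $\infty\gc^2\mid 4\infty$ and the bound \eqref{cFRamBd} holds for it. So $R_{K^\gp}$ is a $2$-group by hypothesis, and this maximal abelian $2$-extension is the full ray class field of conductor $\infty P^2$ over $K^\gp$.
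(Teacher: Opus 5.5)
Your proposal is correct and follows essentially the same route as the paper. Proposition \ref{Elem2} makes $\Gamma^{ab}$ elementary, Proposition \ref{OddDihedral} gives a $D_p$-quotient whose degree-$p$ part over $M$ is tame (contradicting that $R_M$ is a $2$-group), and then Propositions \ref{D1Type} and \ref{D2Type} yield (i) and (ii). Your observation that $K^\gp\subseteq L_1$, so that $R_{K^\gp}$ is a $2$-group and the maximal abelian $2$-extension in Proposition \ref{D2Type}(iii) is the full ray class field of conductor $\infty P^2$, fills in a step the paper leaves implicit.
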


\begin{proof}
The commutator $\Gamma' = \Gal(L_2/L_1)$ is abelian.  Given that $\Gamma^{ab} = \Gal(L_1/K)$ is a 2-group, it is an elementary 2-group by Proposition \ref{Elem2}.  If an odd prime $p$ divides $\fdeg{L_2}{L_1}$, Proposition \ref{OddDihedral} implies that there is a dihedral extension $D/K$ of degree $2p$.  Let $M$ be the unique subfield of $D$ quadratic over $K$.  Then $D/M$ is cyclic of degree $p$ and at most tamely ramified, i.e., of conductor dividing $\infty P_M$.  But we assume that $R_M$ is a 2-group, so there is no such extension of $M$.

If $K^\gp = K$, then the maximal Galois 2-extension $\cF_2$ of $K$ inside $\cF$ is equal to $L_1$ by Proposition \ref{D1Type}.  But no odd prime divides $\fdeg{L_2}{L_1}$, so $L_2 = L_1$ is the maximal solvable extension of $K$ in $\cF$.  Similarly, in case (ii), $\cF_2 = L_2$ by Proposition \ref{D2Type}(ii).  Proposition \ref{D2Type}(iii) describes $L_2/K^\gp$ as a ray class extension.
\end{proof}

Suppose that $\fdeg{K^\gp}{K} = 2$ and let $N$ be a quadratic extension of $K^\gp$ in $L_2$.  Consistent with previous notation, let $P_N$ be the product of the distinct prime divisors of $2 \cO_N$ and let $R_N$ be the ray class group over $N$ of conductor $\infty P_N^2$.

\begin{prop}  \label{DpTest2}
Suppose that $\fdeg{K^\gp}{K}=2$, with $\gp$ inert in $K^\gp/K$ and that $\Gal(L_2/K)$ is a non-abelian $2$-group.  If $R_N$ is a $2$-group for each quadratic extension $N$ of $K^\gp$ in $L_2$, then $L_2$ is the maximal solvable extension of $K$ in $\cF$.  
\end{prop}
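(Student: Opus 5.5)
The plan is to show that $L_3 = L_2$. Every solvable subextension of $\cF/K$ has Galois closure inside some $L_j$, and $L_{j+1}$ is defined in terms of $L_j$ alone. So once $L_3 = L_2$, the tower stabilizes: $L_j = L_2$ for all $j \ge 2$, and $L_2$ is the maximal solvable extension of $K$ in $\cF$.

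Finiteness comes first. $L_2$ is finite over $K$, since by Proposition \ref{DpTest1}(ii) it is a ray class field over $K^\gp$. The extension $L_3/L_2$ is abelian and unramified outside $2$ and $\infty$. By the bound \eqref{cFRamBd} and Proposition \ref{Tame}(ii), its conductor divides a bounded power of $2$ times $\infty$. Hence $L_3/L_2$ is finite by class field theory, and $\Gamma = \Gal(L_3/K)$ is a finite group. We have $\Gamma'' = \Gal(L_3/L_2)$, which is abelian, and $G = \Gamma/\Gamma'' = \Gal(L_2/K)$, which is a non-abelian $2$-group by hypothesis.

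Next, the subgroup required by Proposition \ref{My9Lem}. Take $H = \Gal(L_2/K^\gp)$, which has index $2$ in $G$. By Proposition \ref{D2Type}(i) (recall $\cF_2 = L_2$ from Proposition \ref{DpTest1}(ii)), $H$ is the inertia group $\cI_\gP(\cF_2/K)$. That group is an elementary $2$-group, so $H$ has exponent $2$.

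Now split into two cases according to $\vv{\Gamma''}$.

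If $\vv{\Gamma''}$ is a power of $2$, then $L_3/K$ is a Galois $2$-extension inside $\cF$. So $L_3 \subseteq \cF_2 = L_2$, and we are done.

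Otherwise some odd prime $p$ divides $\vv{\Gamma''}$. Proposition \ref{My9Lem} then says that $\Gamma_0 = \Gal(L_3/K^\gp)$ has a $D_p$-quotient, giving a dihedral extension $D/K^\gp$ of degree $2p$. Let $N$ be its quadratic subfield over $K^\gp$. Then $N/K^\gp$ is abelian, so $N$ lies in the fixed field of $\Gamma_0'$. In the proof of Proposition \ref{My9Lem} it is shown that $\Gamma_0' = \Gamma''$, so this fixed field is $L_2$. Thus $N$ is a quadratic extension of $K^\gp$ in $L_2$.

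The extension $D/N$ is cyclic of degree $p$, which is odd. It is unramified at odd finite places because $D \subseteq \cF$, and it is at most tamely ramified over $2$. So its conductor divides $\infty P_N$, which divides $\infty P_N^2$. Class field theory then makes $p$ divide the order of $R_N$, contradicting the hypothesis that $R_N$ is a $2$-group. Hence $L_3 = L_2$.

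The main point needing care is verifying that $N$ actually lies in $L_2$. This is exactly where the identification $\Gamma_0' = \Gamma''$ from the proof of Proposition \ref{My9Lem} is used. I would also check that $D \subseteq L_3 \subseteq \cF$, so that the ramification constraints used above apply to $D/N$.
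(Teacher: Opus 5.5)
Your proof is correct and is essentially the paper's argument. Proposition~\ref{D2Type} supplies $\cF_2 = L_2$ and the index-$2$, exponent-$2$ subgroup $\Gal(L_2/K^\gp)$; Proposition~\ref{My9Lem} then yields a $D_p$-extension of $K^\gp$ inside $L_3$, whose tamely ramified cyclic degree-$p$ layer over its quadratic subfield $N \subseteq L_2$ contradicts $R_N$ being a $2$-group. Your check that $N \subseteq L_2$ via $\Gamma_0' = \Gamma''$ makes explicit a step the paper leaves implicit.

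One small correction: derive $\cF_2 = L_2$ from Proposition~\ref{D2Type}(ii) together with the hypothesis that $\Gal(L_2/K)$ is a $2$-group. Do not take it from Proposition~\ref{DpTest1}(ii), whose hypotheses on $L_1$ and the groups $R_M$ are not assumed in this statement.
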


\begin{proof}
Proposition \ref{D2Type} provides the following information:
\begin{enumerate}[i)]
\item $L_2$ is the maximal Galois 2-extension of $K$ in $\cF$, so $\fdeg{L_3}{L_2}$ is odd.  \vspace{2 pt}

\item The inertia group $\cI_\gP(L_2/K)$ at each prime $\gP$ over $\gp$ in $L_2$ has index 2 in $\Gal(L_2/K)$ and exponent 2, with fixed field $K^\gp$. 
\end{enumerate} 
If an odd prime $p$ divides $\fdeg{L_3}{L_2}$, then Proposition \ref{My9Lem} implies that there is a dihedral extension $F/K^\gp$ of degree $2p$ with $F$ contained in $L_3$.  Let $N$ be the unique quadratic extension of $K^\gp$ in $F$.  Since the ray class modulus of $F/N$ divides $\infty P_N$, we find that $p$ divides $\vv{R_N}$, a contradiction.  Thus $L_3 = L_2$ and $L_2$ is the maximal solvable extension of $K$ in $\cF$.
\end{proof}

The bound \eqref{cFRamBd} on higher ramification  in the definition  of $\cF$ implies that its root discriminant satisfies $\delta_\cF < 4 \delta_K$ where $\delta_K$ is the root discriminant of $K$.  Under GRH, \cite{Odl} gives an upper bound $\bfOd_K$ for $\fdeg{\cF}{\Q}$.  In favorable circumstances, $\bfOd_K$ can be combined with Propositions \ref{DpTest1} and \ref{DpTest2} to ascertain that $\cF$ is a 2-extension of $K$.  When computational difficulties preclude the use of Proposition \ref{DpTest2}, a weaker result still holds.

\begin{prop} \label{cF2Ext}
Assume that $L_2$ is the maximal $2$-extension of $K$ in $\cF$ and one of the following conditions applies.  Then $\cF$ is equal to $L_2$.
\begin{enumerate}[{\rm i)}] 
\item $\bfOd_K < 9 \, \fdeg{L_2}{\Q}$, or
\item $L_2$ is the maximal solvable extension of $K$ in $\cF$ and $\bfOd_K < 60 \, \fdeg{L_2}{\Q}$.
\end{enumerate}
\end{prop}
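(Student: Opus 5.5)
We compare degrees using the Odlyzko bound, and study $\Gamma = \Gal(\cF/K)$ through its derived series. First, $\cF/K$ is Galois, being the maximal extension of $K$ with the stated ramification conditions. Moreover $\fdeg{\cF}{\Q} \le \bfOd_K$, because $\delta_\cF < 4\delta_K$ and the GRH bound of \cite{Odl} applies; in particular $\Gamma$ is finite. By Notation \ref{cFL1L2}, $L_1$ is the fixed field of $\Gamma'$ and $L_2$ is the fixed field of $\Gamma''$, since $L_2$ is the maximal abelian extension of $L_1$ inside $\cF$. Hence
$$
\vv{\Gamma''} = \fdeg{\cF}{L_2} = \fdeg{\cF}{\Q}/\fdeg{L_2}{\Q} \le \bfOd_K/\fdeg{L_2}{\Q}.
$$

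For case (i), this gives $\vv{\Gamma''} < 9$. Since $L_2$ is a $2$-extension of $K$, the quotient $\Gamma'/\Gamma'' = \Gal(L_2/L_1)$ is a $2$-group. Lemma \ref{9lem} then says that either $\Gamma''$ is a $2$-group or $9$ divides $\vv{\Gamma''}$. The second alternative is impossible because $\vv{\Gamma''} < 9$. So $\Gamma''$ is a $2$-group, and therefore $\Gamma$ is a $2$-group, being an extension of the $2$-group $\Gal(L_2/K)$ by $\Gamma''$. Thus $\cF$ is a $2$-extension of $K$ inside $\cF$. By the assumed maximality of $L_2$ among $2$-extensions of $K$ in $\cF$, we get $\cF = L_2$.

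For case (ii), the bound gives $\vv{\Gamma''} < 60$. Suppose $\Gamma''$ is nontrivial and let $L_3$ be the fixed field of $\Gamma'''$. The group $\Gamma'''$ is characteristic in $\Gamma''$, hence normal in $\Gamma$, so $L_3/K$ is Galois. It is solvable, with derived length at most $3$. Since $L_2$ is assumed to be the maximal solvable extension of $K$ in $\cF$, we get $L_3 = L_2$, i.e.\ $\Gamma''' = \Gamma''$. So $\Gamma''$ is a nontrivial perfect group, hence non-solvable, and therefore has order at least $60$: every group of order less than $60$ is solvable, $A_5$ being the smallest non-solvable group. This contradicts $\vv{\Gamma''} < 60$, so $\Gamma'' = 1$ and $\cF = L_2$.

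I expect no serious obstacle. The only points needing care are the identification of $\Gamma'$ and $\Gamma''$ with $\Gal(\cF/L_1)$ and $\Gal(\cF/L_2)$, and confirming that the hypotheses of Lemma \ref{9lem} hold as stated.
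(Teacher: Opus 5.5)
Your proof is correct and follows the paper's own argument. In case (i) you bound $\vv{\Gamma''}=\fdeg{\cF}{L_2}<9$ and apply Lemma \ref{9lem}; in case (ii) you use $\fdeg{\cF}{L_2}<60$ to force $\cF/L_2$ to be solvable, spelling out the identification $\Gal(\cF/L_2)=\Gamma''$ and the perfect-group step that the paper leaves implicit.
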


\begin{proof}
In case (i), $\cF/L_2$ is a 2-extension by Lemma \ref{9lem}, so $\cF = L_2$.  In case (ii), $\bfOd_K$ forces $\cF/L_2$ to be solvable, so $\cF = L_2$.  
\end{proof}

To satisfy the hypotheses for many of our results, the behavior of certain extensions of $K$ are severely restricted.   Next we address some necessary conditions for those restrictions to hold.  Recall that $r_1$ and $r_2$ are the number of real and complex places of $K$.

\begin{prop} \label{dpi}
Let $\pi$ be a prime dividing $2$ in $\cO$ and let $d_\pi = \fdeg{K_\pi}{\Q_2}$ be its local degree.  Set $\gp = \pi \cO$ and  $\gc = 2\gp^{-1}$.
\begin{enumerate}[{\, \rm i)}]
\item If $K^{\gp} = K$, then $d_\pi \ge r_1+r_2$.   \vspace{2 pt}
\item If there is no quadratic extension of $K$ of conductor dividing $\infty \gc^2$, split over $\gp$, then $d_\pi \ge r_1+r_2-1$.   \vspace{2 pt}
\item  If the maximal abelian $2$-extension $F^{(\pi)}$ of $K$ unramified outside $\gp$ and $\infty$ is finite, then $d_\pi \le r_1+r_2-1$.
\end{enumerate}
\end{prop}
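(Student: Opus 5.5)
All three parts are rank counts from class field theory, comparing the local unit group at the relevant primes over $2$ with the global units $\cO^\times$, whose $\Z$-rank is $r_1+r_2-1$. Hypothesis $\bfK$ ($h_K^+=1$, $2$ unramified) lets us describe every ray class group of $K$ with modulus supported over $2$ and $\infty$ as a quotient of local units by the closure of global totally positive units.

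\emph{Part (i).} Since $h_K^+=1$, the ray class group of modulus $\infty\gc^2$ is $\bigl(\prod_{\pi'\mid\gc}(\cO_{\pi'}/\pi'^2)^\times\bigr)/\mathrm{im}(C)$, with $C=\cO^\times_{pos}$. The $2$-primary part of $\prod_{\pi'\mid\gc}(\cO_{\pi'}/\pi'^2)^\times$ is $\prod_{\pi'\mid\gc}(1+\pi'\cO_{\pi'})/(1+\pi'^2\cO_{\pi'})$, an elementary $2$-group of rank $\sum_{\pi'\mid\gc}d_{\pi'}=[K:\Q]-d_\pi$, because $2$ is unramified. The hypothesis $K^\gp=K$ says this quotient by the image of $C$ has trivial $2$-part. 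The image of $C$ in an elementary $2$-group factors through $C/C^2$, and $C/C^2$ has $\F_2$-rank at most $r_1+r_2$: indeed $\cO^\times/\cO^{\times2}$ has rank $r_1+r_2$, and $C\supseteq\cO^{\times2}$. Hence $[K:\Q]-d_\pi\le r_1+r_2$. Using $[K:\Q]=r_1+2r_2$, this gives $d_\pi\ge r_2$.

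That is weaker than claimed, so I would sharpen the count. From Proposition~\ref{Elem2}, the field $K(\sqrt{u}:u\in\cO^\times)$ has rank $r_1+r_2$ and lies in $\cF_2$. By Proposition~\ref{D1Type}, when $K^\gp=K$ this field is totally ramified at $\gp$, with inertia group elementary abelian of rank $r_1+r_2$. Local class field theory at $\gp$ then bounds this rank by the rank of $U_\gp/U_\gp^{(2)}U_\gp^2$ modulo roots of unity, which is at most $d_\pi$ (conductor exponent $\le 2$, and $U_\gp^{(1)}/U_\gp^{(2)}$ has rank $d_\pi$). So I would argue via Proposition~\ref{D1Type}: inertia at $\gp$ equals the whole Galois group of rank $r_1+r_2$, and it is a quotient of $U_\gp^{(1)}/U_\gp^{(2)}$, which has rank $d_\pi$. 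Hence $d_\pi\ge r_1+r_2$.

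\emph{Part (ii).} I would run the same argument with the splitting condition, as in the proof of Proposition~\ref{D2Type}. Take the rank-$(r_1+r_2)$ elementary extension $L$ of Proposition~\ref{Elem2}. The decomposition group at $\gp$ must be all of $\Gal(L/K)$, since otherwise a quadratic subfield would split at $\gp$ and, by \eqref{cFRamBd}, have conductor dividing $\infty\gc^2$, contradicting the hypothesis. Decomposition mod inertia is cyclic, so inertia has index at most $2$ and rank at least $r_1+r_2-1$. This rank is at most $d_\pi$ by the local bound, giving $d_\pi\ge r_1+r_2-1$.

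\emph{Part (iii).} I would apply Proposition~\ref{NoThyProp} with $F=K$ and $\rho=\pi$. Finiteness means the $\Z_2$-rank in \eqref{TowerRank} is $0$, so $d_\pi=\rk_{\Z_2}\ov{i(C_1)}$. This closure is the image of a subgroup of finite index in $\cO^\times$, so its $\Z_2$-rank is at most $\rk_\Z\cO^\times=r_1+r_2-1$. Hence $d_\pi\le r_1+r_2-1$.

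The main obstacle is the local rank bound in (i) and (ii): justifying that the inertia group at $\gp$ of an abelian extension with $m\le1$ has rank at most $d_\pi$. I would get this from local class field theory, since inertia is the image of $U_\gp/U_\gp^{(2)}$, and $U_\gp/U_\gp^{(1)}$ has odd order while $U_\gp^{(1)}/U_\gp^{(2)}\simeq\F_{2^{d_\pi}}$.
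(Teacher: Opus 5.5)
Your proof is correct, and it is essentially the paper's count read on the other side of the Kummer pairing. Part (iii) is exactly the paper's argument from \eqref{TowerRank}.

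For (i) and (ii) the paper works with units. It takes the kernel $\Phi$ of $\nu\colon\cO^\times/\cO^{\times2}\to U_\pi/(1+4\cO_\pi)U_\pi^2$ (resp.\ $U_\pi/U_\pi^2$, of rank $d_\pi+1$); this kernel consists of the Kummer generators of $K^\gp$ (resp.\ of the split quadratic extensions). From $\Phi=1$ it concludes $r_1+r_2\le d_\pi$ (resp.\ $\le d_\pi+1$).

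You work with Galois groups instead. For $L=K(\sqrt{u}:u\in\cO^\times)$ you show that the inertia (resp.\ decomposition) group at $\gp$ is all of $\Gal(L/K)$. You then bound it by local class field theory through $U_\pi^{(1)}/U_\pi^{(2)}$, plus a cyclic Frobenius quotient in (ii).

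The two arguments are equivalent. $K^\gp$ is the fixed field in $L$ of the inertia group at $\gp$, and the image of $\nu$ is the Kummer dual of that inertia group. The paper's version is self-contained and translates directly into a unit computation. Yours reuses Propositions~\ref{Elem2}, \ref{D1Type} and the argument of Proposition~\ref{D2Type}, and it shows that the extra $1$ in (ii) is the Frobenius, i.e.\ the unramified class.

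Two clean-ups:
\begin{enumerate}
\item[(a)] Delete the first paragraph of your (i). The ray class group of modulus $\infty\gc^2$ only sees local units at the primes dividing $\gc$. It therefore constrains $[K:\Q]-d_\pi$ rather than $d_\pi$, which is why it gave only $d_\pi\ge r_2$.
\item[(b)] Drop the phrase ``modulo roots of unity''. The element $-1$ is nontrivial in $U_\pi/U_\pi^{(2)}U_\pi^2$, and the bound you need and actually use is exactly $d_\pi$.
\end{enumerate}
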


\begin{proof}
Item (i) follows from a minor modification of the proof of Proposition \ref{Elem2}.  Let $U_\pi= \cO_\pi^\times$ be the unit group in the completion $\cO_\pi$.  By definition, the ray class conductor of $K^\gp/K$ divides $\infty \gc^2$. Because $h_K^+ = 1$, Kummer generators (well-defined modulo squares) for the elementary 2-extension $K^\gp/K$ can be chosen from $\cO^\times$, provided they generate an unramified extension over the completion $K_\pi$.  Thus, they are given by the kernel $\Phi$ of the map $\nu$ in the left exact sequence
$$
1 \to \Phi \to \cO^\times/\cO^{\times 2} \xrightarrow{\nu} U_\pi/(1+4\cO_\pi)U_\pi^2.
$$
By the Dirichlet unit theorem, $\rk_{\Z/2\Z} \cO = r_1+r_2$, including the cyclic group of 2-power roots of unity in $\cO$.  Since $\fdeg{U_\pi}{(1+4\cO_\pi)U_\pi^2}  = d_\pi$, we find that
$$
\rk_{\Z/2\Z} \Phi \ge r_1+r_2 - d_\pi.
$$
For $K^\gp = K$ to hold, $\Phi $ must be trivial and so $d_\pi  \ge r_1+r_2$.  The same argument verifies item (ii), replacing the codomain of $\nu$ by $U_\pi/U_\pi^2$, for splitting over $\gp$.

Item (iii) is a consequence of \eqref{TowerRank} with $\rho = \pi$.  Since $\fdeg{\cO^\times_{pos}}{C_1}$ is odd, we have
$$
\rk_{\Z_2} (C_1 \otimes \Z_2)=  \rk_{\Z_2} (\cO_{pos}^\times \otimes \Z_2)  \le \rk_{\Z_2} (\cO^\times \otimes \Z_2) = r_1+r_2-1.
$$
It follows that if $F^{(\pi)}/K$ is finite, then $d_\pi \le r_1+r_2-1$.   
\end{proof}
 
\section{About abelian varieties}  \label{AboutAbVar} 
Throughout this section, $K$ satisfies Hypothesis {\bf K} and we assume that there are at most three primes over 2 in $K$.  Using the conditions introduced in the previous sections, we find that the Abrashkin-Fontaine extension $\cF/K$ is a 2-extension.  If so, to prove non-existence of abelian schemes over $\cO$, it suffices to prove non-existence of the  prosaic ones.

We first recall the argument for bounding the exponent of a group scheme filtered entirely by copies of $\Z/2\Z$ or entirely by copies of $\Mu_2$.

\begin{lem} \cite[Prop. 5.1]{Sch7a}. \label{ConstBd} 
For fixed $K$ and $A$, let $V$ and $V'$ be subquotients of $A[2^n]$, with $V$ filtered by copies of $\Z/2\Z$ and $V'$ filtered by copies of $\Mu_2$.  Then $V$ is constant and $V'$ is diagonalizable.  There are bounds independent of $n$ for the exponents of $V$ and $V'$. 
\end{lem}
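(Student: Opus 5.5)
We first show that $V$ is constant; the statement for $V'$ then follows by Cartier duality, since $V'^\wedge$ is a subquotient of $A^\wedge[2^n]$ filtered by copies of $\Z/2\Z$, and $A^\wedge$ is again an abelian scheme over $\cO$. Because every constituent of $V$ is \'etale, $V$ is an \'etale group scheme over $\cO$: an extension of \'etale group schemes is \'etale. Its Galois module $V(\ov{K})$ is therefore unramified at every finite place of $K$. It is also a successive extension of trivial modules, so $K(V)/K$ is a $2$-extension unramified at all finite places. Any nontrivial $2$-extension has a quadratic subextension, and such a subextension would be unramified at all finite places, contradicting $h_K^+ = 1$. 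Hence $K(V)=K$. An \'etale group scheme over $\cO$ with trivial Galois action is constant, so $V$ is constant. Dually, $V'$ is diagonalizable, i.e. a product of group schemes $\Mu_{2^k}$.

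For the exponent bound, we use that a constant subquotient $\Z/2^k\Z$ of $A[2^n]$ yields, by duality, a subquotient $\Mu_{2^k}$ of $A^\wedge[2^n]$. Following Schoof's argument, we pass to the $2$-divisible group $A[2^\infty]$. Let $V$ have exponent $2^k$. The point to exploit is Fontaine's ramification bound combined with the Weil pairing. Suppose $V \subseteq A[2^n]/H$ with $V \simeq \Z/2^k\Z \oplus \cdots$. Then the Cartier dual sequence places a diagonalizable quotient $\Mu_{2^k}$ inside a subquotient of $A^\wedge[2^n]$.

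The cleanest route is to bound $k$ by comparing with $A[2]$. We look at the image of multiplication by $2^{j}$ on the constant part. The filtration of $A[2^n]$ by kernels of $2^j$ gives a subquotient of $A[2]$ isomorphic to $V[2]$, which has rank at most $2g$. More usefully, the constant subquotient lifts, over the maximal unramified-at-$2$ situation, to points of $A$ over $K$ modulo a bounded group. Concretely, $V$ constant of exponent $2^k$ produces an extension of $\Z/2^k\Z$ by a finite group scheme $H$. The $K$-rational torsion of the corresponding isogenous abelian variety $A/H$ then contains an element of order $2^k$, divided by the order of the relevant Galois cohomology. The torsion of an isogenous abelian variety over $K$ is bounded in terms of reductions modulo odd primes of good reduction. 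Reduction at a prime $\gq \nmid 2$ is injective on torsion, and $|(A/H)(k_\gq)| \le (1+\sqrt{N\gq})^{2g}$. This bound is uniform in $H$, since isogenous varieties have equal point counts over finite fields.

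The main obstacle is that $V$ is only a subquotient, not a subgroup, so its points need not be $K$-rational points of $A$. We handle this by writing $V = H_2/H_1$ with $H_1 \subseteq H_2 \subseteq A[2^n]$ and replacing $A$ by $B = A/H_1$. Then $V$ becomes a constant subgroup scheme of $B$, so $B(K)$ contains $V(K)\simeq V(\ov K)$. Injectivity of reduction at a fixed odd prime $\gq$ then gives $|V| \le |B(k_\gq)| = |A(k_\gq)|$, which is independent of $n$. The exponent of $V$ is therefore bounded, and the same bound applied to $A^\wedge$ bounds the exponent of $V'$.
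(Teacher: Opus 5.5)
Your proof is correct, and its final paragraph is the paper's argument. Because $V$ is \'etale and $h_K^+=1$, you get $K(V)=K$; writing $V=H_2/H_1$ and replacing $A$ by $B=A/H_1$ makes $V$ a constant subgroup of $B$, injectivity of reduction at an odd prime then gives $\vv{V}\le\vv{\bar{B}(k_\gq)}=\vv{\bar{A}(k_\gq)}$, and $V'$ follows by Cartier duality. The detours in your middle paragraph (Fontaine's bound, the Weil pairing, and realizing $V[2]$ as a subquotient of $A[2]$, which you do not justify as a statement about group schemes) play no role and should simply be deleted.
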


\begin{proof} 
Since $V$ is filtered by copies of $\Z/2\Z$, it is \'etale, so $K(V)/K$ is 2-extension unramified over all finite places of $K$.  But $h_K^+ =1$, so $K(V) = K$.  Hence $V$ is constant. By assumption, there is a filtration $X' \subset X \subset A[2^n]$ such that $V = X/X'$.   Thus, $V$ is a closed subgroup scheme of the isogenous abelian variety $B = A/X'$.   Let $\F$ be the residue field of $\cO$ at a prime $\gq$ not dividing 2 and let $\bar{A}$ and $\bar{B}$ be the respective reductions modulo $\gq$.  The reduction map $V \to \bar{V}$ is injective on the 2-group $V$ and isogenous varieties have the same number of points over $\F$.  Hence 
$$
\vv{V} = \vv{\bar{V}} \le \vv{\bar{B}(\F)}=\vv{\bar{A}(\F)}
$$  
and so the Weil bound controls $\vv{V}$, independent of $n$.  Since the rank of $A[2^n]$ is $2 \dim A$, the exponent of $V$ also is bounded.  The analogous claims hold for $V'$ by duality.
\end{proof}

The following non-existence result is essentially well-known.

\begin{theo} \cite[Thm. 2.1]{Sch3}. \label{1PrimeThm}
Assume that $2$ remains prime in $K/\Q$.  Then there is no prosaic abelian scheme over $\cO$. In addition, if $\cF/K$ is a $2$-extension, then $K$ is a Fontaine field. 
\end{theo}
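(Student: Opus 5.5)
When $2$ is inert in $K$, the only divisors of $2$ up to units are $1$ and $2$. Hence $\cT_K=\{\Z/2\Z,\Mu_2\}$, and this is what the argument will use. Suppose $A$ is a non-zero prosaic abelian scheme over $\cO$ and fix $n\ge 1$. Since $A$ is prosaic, so is $A[2^n]$: its field of points is a $2$-extension of $K$, because $K(A[2^n])/K(A[2])$ is a $2$-extension. By Proposition \ref{MoveZMu}, $A[2^n]$ then has a filtration $0\subseteq V_1\subseteq V_2\subseteq A[2^n]$ in which $V_2/V_1$ can have no constituents at all. Thus $V_1=V_2$, so $A[2^n]$ is an extension of a group scheme $V/V_2$ filtered by copies of $\Z/2\Z$ by a group scheme $V_1$ filtered by copies of $\Mu_2$. (One can also check directly that $\Ext^1_\cO(\Z/2\Z,\Mu_2)$ is the only obstruction, which is the reason the $\Mu_2$'s can be moved left.)

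Lemma \ref{ConstBd} applies to the subquotients $V_1$ and $A[2^n]/V_1$. It gives a constant $c$, depending on $A$ but not on $n$, with $2^c$ killing both. Hence $2^{2c}$ kills $A[2^n]$ for every $n$. Taking $n>2c$ gives a contradiction, since $A[2^n]$ has exponent exactly $2^n$ when $\dim A>0$. So there is no prosaic abelian scheme over $\cO$.

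For the second statement, assume $\cF/K$ is a $2$-extension and let $A$ be an abelian variety over $K$ with everywhere good reduction. Its N\'eron model is an abelian scheme over $\cO$, and $A[2]$ is a finite flat group scheme of exponent $2$ over $\cO$. By Theorem \ref{FontThm}, as recorded after Notation \ref{cFL1L2}, the field $K(A[2])$ lies in $\cF$. It is therefore a $2$-extension of $K$, so $A$ is prosaic. By the first part, $A=0$, and $K$ is a Fontaine field.

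The main point to check is the application of Proposition \ref{MoveZMu} with the middle piece empty. In particular, we must confirm that in the inert case every constituent of a prosaic scheme really lies in $\{\Z/2\Z,\Mu_2\}$, which is what the classification of \cite{TO} gives. The remaining steps are formal.
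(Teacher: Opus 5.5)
Your proof is correct and follows the paper's argument: Tate--Oort leaves only $\Z/2\Z$ and $\Mu_2$, Proposition \ref{MoveZMu} then gives $0\subseteq V_1\subseteq A[2^n]$ with empty middle piece, and Lemma \ref{ConstBd} bounds both exponents independently of $n$. Your explicit $2^{2c}$ argument and your deduction of the Fontaine statement via Theorem \ref{FontThm} only spell out what the paper leaves implicit.

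One correction to your parenthetical aside, which the main argument does not use. The obstruction to moving $\Mu_2$ to the left past $\Z/2\Z$ is $\Ext^1_\cO(\Mu_2,\Z/2\Z)$, and this vanishes by Corollary \ref{ExtDimCor}(i) with $s=1$. It is not $\Ext^1_\cO(\Z/2\Z,\Mu_2)$, which is non-zero by Lemma \ref{Kum}.
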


\begin{proof} 
By \cite{TO}, the only group schemes of order 2 over $\cO$ are $G_1 = \Z/2\Z$ and $G_2 = \Mu_2$.  Suppose that $A$ is a prosaic abelian scheme over $\cO$.  By Proposition \ref{MoveZMu},  there is a filtration 
$$
0 \subseteq V_1 \subseteq V= A[2^n]
$$ 
in which $V_1$ is filtered entirely by copies of $\Mu_2$ and $V/V_1$ entirely by copies of $\Z/2\Z$.  According to Proposition \ref{ConstBd}, there are bounds independent of $n$ for the exponents of $V_1$ and $V/V_1$, so $A = 0$.
\end{proof}

Our task becomes more difficult as the number of primes over 2 in $\cO$ increases.  See Notation \ref{Kp} for the field $K^\gp$.  The following {\em tower hypothesis} will be used. 
\begin{equation}  \label{Tower}
\fbox{\parbox{210 pt}
{\begin{tabular}{l l}
$\bfT(\gp)\!:$\!&the maximal abelian 2-extension of  $K$  \\
&unramified outside $\gp$ and $\infty$ is finite.  
\end{tabular}}} 
\end{equation}

\begin{theo} \label{2PrimeThm}    
Assume that $2$ is the product of two primes in $\cO$ and one of the following conditions applies for some prime divisor $\gp$ of $2\cO$:  
\begin{enumerate}[{\rm i)}]
\item $K^{\gp} = K$,  \, or  \vspace{2 pt}
\item $\fdeg{K^{\gp}}{K} = 2$, $\gp$ is inert in $K^{\gp}/K$ and at least one of $\bfT(\gp)$ or $\bfT(2\gp^{-1})$ holds.
\end{enumerate}
Then there is no {\em prosaic} abelian scheme over $\cO$.  In addition, if $\cF/K$ is a $2$-extension, then $K$ is a Fontaine field. 
\end{theo}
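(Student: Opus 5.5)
Write $2 = \pi_1\pi_2$ with $\gp = \pi_1\cO$, so $\gc = 2\gp^{-1} = \pi_2\cO$. By \cite{TO}, the simple group schemes of order 2 over $\cO$ are, up to isomorphism, $\Z/2\Z$, $\Mu_2$, $G_{\pi_1}$ and $G_{\pi_2}$. Suppose $A$ is a prosaic abelian scheme over $\cO$ and fix $n\ge1$. I will build a filtration of $V = A[2^n]$ whose graded pieces each have exponent bounded independently of $n$; since $A[2^n]$ has exponent $2^n$, letting $n\to\infty$ then forces $A=0$. The final sentence of the theorem is then formal: if $\cF/K$ is a $2$-extension, then $K(A[2])\subseteq\cF$ is a $2$-extension of $K$, so every abelian scheme over $\cO$ is prosaic and hence zero.

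\textbf{Step 1.} Apply Proposition \ref{MoveZMu} to get $0\subseteq V_1\subseteq V_2\subseteq V$. Here $V_1$ is filtered by $\Mu_2$, $V/V_2$ is filtered by $\Z/2\Z$, and $V_2/V_1$ has all constituents in $\{G_{\pi_1},G_{\pi_2}\}$. Since $V_1$ and $V/V_2$ are subquotients of $A[2^n]$, Lemma \ref{ConstBd} bounds their exponents independently of $n$.

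\textbf{Step 2.} Separate the copies of $G_{\pi_1}$ from those of $G_{\pi_2}$ inside $W=V_2/V_1$. Apply Proposition \ref{ExtDim} with $\rho=\pi_1$, $\rho'=\pi_2$; then $S=\{\pi_1\}$, $s=1$, $P'=\pi_2$. This gives $|\Ext^1_\cO(G_{\pi_1},G_{\pi_2})| = [M:K]$, where $M$ is the maximal elementary $2$-extension of $K$ of conductor dividing $\infty\pi_2^2 = \infty\gc^2$ and split at $\gp$. The subfields of conductor dividing $\infty\gc^2$ make up $K^\gp$. In case (i), $K^\gp=K$. In case (ii), the only quadratic candidate is $K^\gp$ itself, and it is excluded because $\gp$ is inert in it. Either way $M=K$, so $\Ext^1_\cO(G_{\pi_1},G_{\pi_2})=0$. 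Using Definition \ref{Fils}, every copy of $G_{\pi_1}$ can therefore be moved to the left past every adjacent $G_{\pi_2}$. Run this bubble-sort inductively on a composition series of $W$ and lift back to $V$. The result is $V_1\subseteq V_1'\subseteq V_2$ with $V_1'/V_1$ filtered only by $G_{\pi_1}$ and $V_2/V_1'$ filtered only by $G_{\pi_2}$.

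\textbf{Step 3.} Bound the exponents of these two middle pieces; this is where the case split matters.

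In case (i), Corollary \ref{z7Cor} applies with $\rho=\pi_1$ and with $\rho=\pi_2$, since $\pi_i\not\sim1,2$ and $\pi_1,\pi_2$ are the two conjugates $\rho,\bar\rho$. So both pieces have exponent $2$.

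In case (ii), suppose $\bfT(\gp)$ holds, so the maximal abelian $2$-extension of $K$ unramified outside $\pi_1$ and $\infty$ is finite, of degree $m$. Proposition \ref{bd} with $\rho=\pi_1$ then bounds by $2m$ the exponent of any group scheme filtered entirely by $G_{\pi_1}$, or entirely by $G_{\bar\rho}=G_{\pi_2}$. If instead $\bfT(2\gp^{-1})$ holds, use Corollary \ref{bdCor} with $\rho=\pi_2$ to the same effect. The bound is independent of $n$.

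Combining Steps 1--3, $V=A[2^n]$ has a four-step filtration with graded pieces of exponent bounded independently of $n$. Hence the exponent of $A[2^n]$ is bounded independently of $n$, which is impossible unless $A=0$.

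\textbf{Main obstacle.} The delicate point is Step 2: identifying the field $M$ of Proposition \ref{ExtDim} with a subfield of $K^\gp$ that splits at $\gp$, and checking that the moving argument can be carried out on a filtration whose terms are closed subgroup schemes of $V$. For the latter I would use the correspondence with Galois submodules (the lemma from \cite{CG}), exactly as in the proof of Proposition \ref{sorter}.
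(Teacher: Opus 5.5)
Your proposal is correct and follows the paper's proof essentially step for step. You use Proposition \ref{MoveZMu} to isolate the $\Mu_2$ and $\Z/2\Z$ pieces, then Proposition \ref{ExtDim} with $\rho=\pi_1$, $\rho'=\pi_2$ and $M=K$ to get $\Ext^1_\cO(G_{\pi_1},G_{\pi_2})=0$ and sort the middle, then Corollary \ref{z7Cor} in case (i) and Proposition \ref{bd} (equivalently Corollary \ref{bdCor}) in case (ii), and you spell out the final Fontaine-field sentence that the paper leaves implicit via $K(A[2])\subseteq\cF$.
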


\begin{proof}
Let $\gp = \pi_1 \cO$ and $\gq = 2\gp^{-1} = \pi_2 \cO$ for prime elements $\pi_1,\pi_2$.  The only group schemes of order $2$ over $K$ are $\Mu_2$, $\Z/2\Z$, $G_{\pi_1}$ and $G_{\pi_2}$. Let $A$ be a {\em prosaic} abelian variety everywhere good over $K$.  Apply Proposition \ref{MoveZMu} to  $V=A[2^n]$, to obtain a filtration
$$
0 \subseteq V_1\subseteq V_2 \subseteq V
$$
in which $V_1$ is filtered only by copies of $\Mu_2$, $V/V_2$ is filtered only by copies of $\Z/2\Z$ and each constituent of $V_2/V_1$ is isomorphic to $G_{\pi_1}$ or $G_{\pi_2}$.

Assuming (i) or (ii), there is no quadratic extension of $K$ of conductor $\infty \gq^2$ in which $\gp$ splits. Then $M = K$ and $s = 1$ in Proposition \ref{ExtDim} with $\rho=\pi_1$ and $\rho'=\pi_2$.  Hence $\Ext^1_\cO(G_{\pi_1},G_{\pi_2})=0$ and so there is a refined filtration
$$
0 \subseteq V_1 \subseteq V' \subseteq V_2 \subseteq V
$$
in which all constituents of $V'/V_1$ are isomorphic to $G_{\pi_1}$ and all constituents of $V_2/V'$ are isomorphic to $G_{\pi_2}$.  According to Proposition \ref{ConstBd}, the exponents of both $V_1$ and $V/V_2$ are bounded, independent of $n$.  It remains to show that the exponents of  $V'/V_1$ and $V_2/V'$ are similarly bounded, so $A = 0$. 

In case (i), both $V'/V_1$ and $V_2/V'$ have exponent 2 by Proposition \ref{z7Cor}.  Suppose instead that (ii) holds.  Under either $\bfT(\gp)$ or $\bfT(\gq)$, the requisite bounds are given by Proposition \ref{bd}.  
\end{proof}

\begin{theo}  \label{3PrimeThm}
If there are three primes over $2$ in $\cO$ and one of the following conditions applies, then there is no {\em prosaic} abelian scheme over $\cO$.   
\begin{enumerate}[{\rm i)}]
\item $K^\gp = K$ for some prime divisor $\gp = \gp_1$ of $2\cO$,  \, or  \vspace{2 pt}
\item The prime divisors in $2\cO = \gp_1 \gp_2 \gp_3$ can be ordered so that $\fdeg{K^{\gp_1}}{K} = 2$, $\gp_1$ is inert in $K^{\gp_1}/K$ and $\gp_2$ does not split in $K^{\gp_1}/K$.  Also, $\bfT(\gq)$ holds for all prime divisors $\gq$ of $2\cO$.
\end{enumerate}
In addition, if $\cF/K$ is a $2$-extension, then $K$ is a Fontaine field. 
\end{theo}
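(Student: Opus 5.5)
We follow the pattern of Theorem \ref{2PrimeThm}, now using Proposition \ref{sorter}. Let $A$ be a prosaic abelian scheme over $\cO$ and set $V = A[2^n]$. Write $2 = \pi_1\pi_2\pi_3$ with $\gp_i = \pi_i\cO$, numbered so that $\gp_1$ is the prime in (i) or (ii). The first task is to check ${\bf QE}_1$ and ${\bf QE}_2$ for a suitable ordering of $\pi_2,\pi_3$; then Proposition \ref{sorter} gives the eight-step filtration of $V$. It then suffices to bound, independently of $n$, the exponent of each subquotient, since this forces $A=0$. The final assertion follows as before: if $\cF/K$ is a $2$-extension, every abelian scheme over $\cO$ is prosaic.

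\emph{Verifying the quadratic extension hypotheses.} ${\bf QE}_1$ asks that no quadratic extension of $K$ split over $\pi_1$ have conductor dividing $\infty(\pi_2\pi_3)^2 = \infty\gc^2$ with $\gc = 2\gp_1^{-1}$. Any such extension lies in $K^{\gp_1}$.

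In case (i), $K^{\gp_1}=K$, so ${\bf QE}_1$ holds.

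In case (ii), the only candidate is $K^{\gp_1}$ itself, and $\gp_1$ is inert there, so ${\bf QE}_1$ holds again.

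For ${\bf QE}_2$ we need no quadratic extension split over $\pi_2$ with conductor dividing $\infty\pi_3^2$. Such a field has conductor dividing $\infty\gc^2$, so it is contained in $K^{\gp_1}$.
\begin{enumerate}[\,$\bullet$]
\item In case (i), $K^{\gp_1}=K$, so ${\bf QE}_2$ holds for either labelling of $\pi_2,\pi_3$.
\item In case (ii), the only candidate is $K^{\gp_1}$, and $\gp_2$ does not split in it by hypothesis. So ${\bf QE}_2$ holds with the given ordering.
\end{enumerate}

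\emph{Bounding the subquotients.} The two end pieces, $V_1$ (filtered by $\Mu_2$) and $V/V_7$ (filtered by $\Z/2\Z$), have bounded exponent by Lemma \ref{ConstBd}. Each of the six middle quotients is filtered by copies of a single $G_\rho$ with $\rho\not\sim 1,2$.

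In case (i), Corollary \ref{z7Cor} shows that each middle quotient has exponent $2$.

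In case (ii), apply Corollary \ref{bdCor}, or directly Proposition \ref{bd}. For each $\rho$, one of $\rho,\bar\rho$ is a single prime $\pi_i$, up to a unit. So it suffices that the maximal abelian $2$-extension unramified outside $\pi_i$ and $\infty$ be finite, which is exactly $\bfT(\gp_i)$. Proposition \ref{bd} then bounds the exponent of the subquotient by $2\fdeg{M}{K}$. This is why case (ii) assumes $\bfT(\gq)$ for every $\gq\mid 2\cO$.

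Since all eight pieces have exponent bounded independently of $n$, the exponent of $A[2^n]$ is bounded, hence $A=0$.

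The main obstacle is the bookkeeping for ${\bf QE}_2$. One must confirm that every extension entering Table \ref{Split} is governed by $K^{\gp_1}$ alone. This rests on the remark after \eqref{QE} and on the fact that the conductor $\infty\pi_3^2$ divides $\infty\gc^2$. One must also confirm that the non-split items 5 and 12 are harmless via Proposition \ref{AltNonSplit}, which already requires ${\bf QE}$-type vanishing for the pair $\{\pi_1,\pi_2\}$.
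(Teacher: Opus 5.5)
Your proof is correct and follows the paper's own argument. You verify $\bfQE_1$ and $\bfQE_2$ through $K^{\gp_1}$, invoke Proposition \ref{sorter}, and bound the pieces using Lemma \ref{ConstBd}, then Corollary \ref{z7Cor} in case (i) and Proposition \ref{bd} in case (ii). Applying Proposition \ref{bd} directly to whichever of $\rho,\bar\rho$ is a single prime is exactly how the hypothesis $\bfT(\gq)$ for every $\gq$ gets used; the paper instead cites Corollary \ref{bdCor}, whose stated hypothesis concerns the two-prime modulus $\gc$.
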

 
\begin{proof}
Let $\gp_j = \pi_j \cO$ for $j = 1,2,3$.  Recall the quadratic extension conditions $\bfQE_1$ and $\bfQE_2$ of \eqref{QE}, used in Table \ref{Split}.  In case (i),  there is no quadratic extension of $K$ whose conductor divides $\infty (\pi_2\pi_3)^2$, so ${\bf QE}_1$ and ${\bf QE}_ 2$ automatically hold.  In case (ii), $\bfQE_1$ is given.  The ray class field of conductor $\infty \pi_3^2$ over $K$ is contained in $K^{\gp_1}$.  By assumption, $\gp_2$ does not split in $K^{\gp_1}/K$ and so $\bfQE_2$ holds.

Let $A$ be a {\em prosaic} abelian variety everywhere good over $K$.  In both cases, we have the filtration of $V = A[2^n]$ given by Proposition \ref{sorter}.  Let $V_\rho$ denote the subquotient of $V$ filtered entirely by copies of a fixed $G_\rho$.  By Proposition \ref{ConstBd}, there is a bound independent of $n$ for the exponent of $V_\rho$ when $G_\rho$ is isomorphic to $\Mu_2$ or $\Z/2\Z$.  In case (i), Corollary \ref{z7Cor}  asserts that $2V_\rho = 0$ for $\rho \not\sim 1,2$.  In case (ii), Corollary \ref{bdCor} gives a bound on the exponent of each $V_\rho$ with $\rho \not\sim 1,2$ when $\bfT(\gq)$ is true for all prime divisors $\gq$ of $2 \cO$.  Hence $A = 0$.
\end{proof}

\section{How were the computations done?}  \label{HowDone}

\numberwithin {equation}{section} 
In this section, we describe how we found the Fontaine fields $K$ satisfying Hypothesis {\bf K}.    
 Recall Notation \ref{cFL1L2} for the Abrashkin-Fontaine extension $\cF$ and its subfields $L_j$.  Our aim is to make $\cF$ a 2-extension of $K$.

To control $\cF_2$, the maximal Galois 2-extension of $K$ in $\cF$, we assume that one of the following hypotheses holds for the field $K^\gp$ in Notation \ref{Kp}: 
\begin{equation} \label{D1D2}
\fbox{\parbox{230 pt}
{\begin{tabular}{l l}
${\bfD}_1(\gp)\!:$ &$K^\gp=K$ for some $\gp$ dividing $2\cO.$ \\[3 pt]
$\bfD_2(\gp)\!:$   &$[K^\gp:K] =2$,  with $\gp$ inert in $K^\gp/K$ and\\[2 pt]
&$K^\gq \ne K $ for all $\gq \vert 2\cO.$
\end{tabular}}}
\end{equation}
Equivalently, there is no quadratic extension of $K$, split over $\gp$, with ray class modulus dividing $4\gc^2 \infty$, where $\gc = 2 \gp^{-1}$.
 
For any number field $M$, let $P_M$ be the product of the distinct prime divisors of $2 \cO_M$ and let $R_M$ be the ray class group over $M$ of conductor $\infty P_M^2.$  We successively imposed the following conditions on $K$:  

\begin{enumerate}[{\, \bf 1.}]
\item There are  at most 3 primes over 2 in $\cO$ and $R_K$ is a 2-group.  If not, reject $K$.  Now $\Gal(L_1/K)$ is an elementary 2-group of rank $r_1+r_2$ by Proposition \ref{Elem2}.  

\vspace{2 pt}

\item $K$ must satisfy $\bfD_1(\gp)$ or $\bfD_2(\gp)$ for some prime divisor $\gp$ of $2\cO$. Otherwise, reject $K$.

\vspace{2 pt}

\item  Under $\bfD_1(\gp),$ $L_1$ is the maximal Galois 2-extension of $K$ in $\cF$ by Proposition \ref{D1Type}. For each quadratic extension $M$ of $K$ in $L_1,$ compute its ray class group $R_M$.  If an odd prime divides some $\vv{R_M}$, reject $K$. Otherwise, $L_1$ is the maximal solvable extension of $K$ in $\cF$ by Proposition \ref{DpTest1}(i).
   \vspace{2 pt}

\item If $\bfD_2(\gp)$ holds, then compute each quadratic extension $M$ of $K$ in $L_1$ and its ray class group $R_M$.  If an odd prime divides some $\vv{R_M}$, reject $K$. Otherwise, $L_2$ is the maximal Galois 2-extension of $K$ in $\cF$ and $\Gal(L_2/K^\gp) = R_{K^\gp}$ by Proposition \ref{DpTest1}(ii).  In particular, if  $L_2 = L_1$, then $L_1$ is the maximal solvable extension of $K$ in $\cF$.

\vspace{2 pt}

\item  If $\bfD_2(\gp)$ holds and $L_2\supsetneq L_1,$ then for each quadratic extension $N$ of $K^\gp$ in $L_2,$ compute  the ray class group $R_N$.   If an odd prime divides some $\vv{R_N}$, reject $K$.   Otherwise, $L_2$ is the maximal solvable extension of $K$ in $\cF$ by Proposition \ref{DpTest2}.
\end{enumerate}

\begin{Rem} 
All the fields $K$ of degree at most 8 for which we verified step {\bf 4} also satisfied {\bf 5}.  The time needed to check {\bf 5} for higher degree fields in our collection was  prohibitive, but we suspect that the same might be true for them. 
\end{Rem}

When there is one prime over 2 in $K$, there is no {\em prosaic} abelian scheme over $\cO$ by Corollary \ref{1PrimeThm}.  By Theorems \ref{2PrimeThm}(i) and \ref{3PrimeThm}(i), the same is true if $\bfD_1(\gp)$ holds and $K$ passes through step {\bf 3}.

It remains to consider fields $K$ of $\bfD_2$-type that pass through step {\bf 1}, {\bf 2} and  {\bf 4} with at least two primes over 2.  For those, we  impose further conditions to control extensions of group schemes. The {\em tower condition} \eqref{Tower} is checked by using Proposition \ref{NoThyProp}.

\begin{enumerate}[{\, \bf 6}.] 
\item In case of two primes,  $\bfT(\gq)$ holds for some prime divisor $\gq$ of $\cO$. In case of three primes, $\bfT(\gq)$ holds for all prime divisors $\gq$ of $\cO$. Otherwise, reject $K$.
\end{enumerate}

\begin{enumerate}[{\, \bf 7}.] 
\item The primes  in  $2\cO=\gp_1\gp_2\gp_3$ can be ordered so that ${\bfD}_2(\gp_1)$ is satisfied and $\gp_2$ does not split in $K^{\gp_1}/K.$ Otherwise reject $K.$
\end{enumerate}

Theorem \ref{2PrimeThm}(ii) and Theorem \ref{3PrimeThm}(ii) now show  the non-existence of prosaic abelian schemes over $\cO$.

\begin{enumerate}[{\, \bf 8}.] 
\item If $K$ also satisfies {\bf 5} or $L_2=L_1,$ then the Odlyzko bound in Proposition \ref{cF2Ext}(ii) shows that $\cF/K$ is a 2-extension.  Otherwise, Proposition \ref{cF2Ext}(i) is available to check that $\cF/K$ is a 2-extension.  If so, there are in fact no abelian schemes over $\cO$.
\end{enumerate}

 \noindent Explanation for the column in Table \ref{punchline}:   
 
\begin{enumerate}[\, $\bullet$]
\item {\em deg} refers to the degree $\fdeg{K}{\Q}$.  \vspace{2 pt}
\item  {\em LMFDB} contains the number of fields  of odd discriminant, strict class number 1 and root discriminant at most 9.5 downloaded from \cite{LMF} in December 2025. For degrees 12, 14 and 16, we added strict Hilbert class fields $K$ satisfying Hypothesis {\bf K}.   \vspace{2 pt}
\item {\em S} refers to the number of fields whose strict Hilbert class field satisfies our non-existence criteria. 
 \end{enumerate}
 
\begin{table}[h] 
\begin{center}{\begin{caption}{Counting the Fontaine fields we found} \label{punchline} \end{caption} }
\end{center}
 \renewcommand{\arraystretch}{1.2}
 \begin{tabular}{ |c| c | c | c | c | c |c|c||c|}
\hline
\multicolumn{2}{|c|}{}&\text{1 prime}&\multicolumn{2}{|c|}{ \text{2 primes}}&\multicolumn{2}{|c|} {\text{3 primes}}&\multicolumn{2}{|c|}{}\\ 
\hline
deg&LMFDB &$ D_1$&$D_1$ &$ D_2$&$D_1$&$D_2 $&S&Total\\ 
\hline
2&16&3&1&1&0&0&7&12\\
 \hline
3 &64 &16&4&4&0 &1&5&30\\
\hline
4&337&50 &33&18&3&2&21 &127 \\
\hline
5&885 &146 &83&53&4&5&26 &317 \\
\hline
6&3199&577 &360&125 &23&16&66 & 1167\\
\hline 
7&7392 &1542 &903&331&42&40&0& 2858 \\
\hline
8&2591 &612&309&56&27&9 &10 &1023 \\
\hline
9&837 &348&139&11&0&1 &0 & 499\\
\hline
10&26352 &11584&6023&476&320&75&0 & 18478\\
\hline
11&22&15&5&2& 0&0 &0& 22\\
\hline
12&302&65&105&3 & 3 & 0&0&176\\
\hline
14& 34   & 9  & 4 &  0  &  0 &   0  &  0 &  13  \\
\hline
16&58   &   9 & 13  &  0  &  0&  0   & 0  &22   \\
\hline
total&  \multicolumn{7}{c||}{} &24744\\
\hline
\end{tabular} 
\end{table}
\renewcommand{\arraystretch}{1}

\begin{Rem}\label{Tcha}
To compare with \cite{Tch}, we consider only odd discriminants and we assume GRH only for number fields, while he assumes the standard conjectures on $L$-series of abelian varieties.  He lists 936 numbers fields of odd discriminant and degree up to 12. Except for 8 fields which  do not satisfy {\em all} our hypotheses, the others are known to us. This gives mutual support for his assumptions and our methods. 
\end{Rem}


\begin{thebibliography}{12345}   
\bibitem[Ab]{Ab} V. A. Abrashkin, Galois modules of group schemes of period $p$ over the ring of Witt vectors, Math. USSR-Izv. {\bf 31} (1988), no. 1, 1--46. 
\bibitem[BK1]{BK1} A. Brumer and K. Kramer,  Non-existence of certain semistable abelian varieties, Manus. Math. {\bf 106} (2001), 291--304.
\bibitem[BK2]{BK2} A. Brumer and K. Kramer, Semi-stable abelian varieties with small division fields,  in: Galois Theory and Modular Forms, K.-I. Hashimoto et al., eds., Kluwer (2003), 13--38.  
\bibitem[BK3]{BK3} A. Brumer and K. Kramer, Certain abelian varieties bad at only one prime. Algebra and Number Theory {\bf 12} (2018) 1027-1071.
\bibitem[BK4]{BK4} A. Brumer and K. Kramer, Hyperelliptic $\cS_7$-curves of prime conductor. Manus. Math.  {\bf 171} (2023) 169-213.
\bibitem[Ca]{Ca} F. Calegari, Semistable abelian varieties over $\Q$, Manus. Math. {\bf 113} (2004), 507--529.
\bibitem[CG]{CG} F. Campagna and P. Goodman, Semistable abelian varieties over $\Q$ with bad reduction at 19 only, arXiv:2510.12625.
\bibitem[De]{De} L. Demb\'el\'e, On the existence of abelian surfaces with everywhere good reduction, arXiv:1903.10394. 
\bibitem[Fa]{Fa} G. Faltings, Endlichkeitss\"atze f\"ur abelsche Variet\"aten \"uber Zahlk\"orpern, Invent. Math. 73 (1983), 349--366.
\bibitem[Fo]{Fo} J.-M. Fontaine, Il n'y a pas de vari\'{e}t\'{e} ab\'{e}lienne sur $\Z$, Invent. Math. {\bf 81} (1985), 515--538.
\bibitem[Gro]{Gro} A. Grothendieck, Mod\`{e}les de N\'{e}ron et monodromie. S\'{e}m. de G\'{e}om. 7, Expos\'{e} IX, Lecture Notes in Math. {\bf 288}, New York: Springer-Verlag 1973.
\bibitem[Kha]{Kha} C. Khare, Modularity of Galois representations and motives with good reduction properties, J. Ramanujan Math. Soc. {\bf 22} (2007) 1–26.
\bibitem[SL]{SL} S. Lang, Algebraic Number Theory, $2^{\rm nd}$ ed., New York:  Springer-Verlag, 1994.
\bibitem[LMF]{LMF} The LMFDB Collaboration, The L-functions and modular forms database, \\ \url{https://www.lmfdb.org}, 2026.
\bibitem[MAG]{MAG} W. Bosma, J. Cannon  and C. Playoust. The Magma algebra system. I. The user language. J. Symb. Comp. {\bf 24} (1997), 235--265.
\bibitem[Mes]{Mes} J.-F. Mestre, Formules explicites et minorations de conducteurs de vari\'et\'es alg\'ebriques, Compos. Math. {\bf 58} (1982), 209--232.   
\bibitem[Odl]{Odl} A. Odlyzko, Lower bounds for discriminants of number fields, II, T\^{o}hoku Math. Jl. {\bf 29} (1977), 209--216, \url{http://www.dtc.umn.edu/~odlyzko/unpublished/index.html}.
\bibitem[Ray]{Ray} M. Raynaud,  Sch\'emas en groupes de type $(p,p,\dots,p),$ Bull. Soc. Math. France, {\bf 102} (1974),241--280.  
\bibitem[Sch1]{Sch1} R. Schoof,  Abelian varieties over $\Q(\sqrt{6})$   with good reduction everywhere. In Class Field Theory - its Centenary and Prospect, Adv. Studies in Pure Mathematics, Tokyo 2001. 
\bibitem[Sch2]{Sch2} R. Schoof,  Abelian varieties over the field of the 20th roots of unity with good reduction  everywhere, in Applications of Algebraic Geometry to Coding Theory, Physics and Computation, Kluwer, Dordrecht (2001), 291-296. 
\bibitem[Sch3]{Sch3} R. Schoof,  Abelian varieties over cyclotomic fields   with everywhere good reduction, Math. Ann. {\bf 325} (2003), 413--448. 
\bibitem[Sch4]{Sch4} R. Schoof,  Abelian varieties over $\Q$ with bad reduction in one prime only, Compos. Math. {\bf 141} (2005), 847--868.
\bibitem[Sch5]{Sch5} R. Schoof, Semistable abelian varieties with good reduction outside 15,  Manus. Math. {\bf 139} (2012), 49--70.
\bibitem[Sch6]{Sch6} R. Schoof, On the modular curve $X_0(23)$, Geometry and Arithmetic, EMS Publishing House, Z\"urich 2012, 317--345.
\bibitem[Sch7]{Sch7} R. Schoof, Abelian varieties over real quadratic fields with good reduction everywhere, Private communication, July 2020. 
\bibitem[Sch7a]{Sch7a} R. Schoof, Abelian varieties over real quadratic fields with good reduction everywhere, preprint, schoof/papers.html.
\bibitem[Ser]{Ser} J.-P. Serre, Local Fields, Lecture Notes in Math. {\bf 67}, Springer-Verlag, 1979.
\bibitem[TO]{TO} J. Tate and F. Oort, Group Schemes of Prime Order,  Ann. scient. \'Ec. Norm. Sup., {\bf 3} (1970), 1--21.
\bibitem[Tch]{Tch} P. Tchamitchian, Explicit lower bounds on the conductors of elliptic curves and abelian varieties over number fields, arXiv:2410.12922. 
\bibitem[Zar]{Zar} J.G. Zarhin, A remark on endomorphisms of abelian varieties over function fields of finite characteristic, Math USSR {\bf 8}(1974), 477--480.
\end{thebibliography}
\end{document}